\newtheorem{theorem}{Theorem}[section]
\newtheorem{lemma}[theorem]{Lemma}
\newtheorem{proposition}[theorem]{Proposition}
\theoremstyle{definition}
\numberwithin{theorem}{section}
\numberwithin{equation}{section}
\newcommand{\Del}{\Delta}
\newcommand{\eps}{\varepsilon}
\newcommand{\R}{{\mathbb R}}
\newcommand{\Z}{{\mathbb Z}}
\newcommand{\bC}{{\mathbb C}}
\newcommand{\cB}{\mathcal{B}}
\newcommand{\cF}{\mathcal{F}}
\newcommand{\cH}{\mathcal{H}}
\newcommand{\cM}{\mathcal{M}}         
\newcommand{\cS}{\mathcal{S}}
\newcommand{\cU}{\mathcal{U}}
\newcommand{\cW}{\mathcal{W}}
\newcommand{\Ran}[1]{\operatorname{Ran}\, #1}
\newcommand{\spec}{\mathrm {spec}}
\renewcommand{\Re}{\operatorname{Re}}
\renewcommand{\Im}{\operatorname{Im}}
\newcommand{\dist}{{\rm{dist\, }}}
\newcommand{\p}{\partial}
\newcommand{\DETAILS}[1]{}
\newcommand{\variants}[1]{}
\newcommand{\N}{\mathbb{N}_0}
\begin{document}
	\title{Near-pulse solutions of the FitzHugh-Nagumo 
		equations on cylindrical surfaces}
	\author{A. Talidou, A. Burchard, and I. M. Sigal}

	\maketitle

\begin{abstract} We introduce a geometrical extension of the  FitzHugh-Nagumo 
equations describing propagation of electrical impulses in nerve axons.
In this extension, the axon is modelled as a warped cylinder, rather than a straight line, as  is usually done, while pulses propagate on its surface, as is the case with real axons.

We prove the stability of electrical impulses for a standard cylinder and existence and stability of pulse-like solutions for  warped cylinders whose radii 
are small  and vary slowly along their lengths. 
\end{abstract}


\section{Introduction}\label{sec:introduction}


The FitzHugh-Nagumo system (\cite{FH61, Nagumo62}), 
modelling the propagation of electric impulses in nerve axons, is a simplified version of the Hodgkin-Huxley system~\cite{HH52} and is given as
\begin{equation}\label{FHN}
\begin{split}
\p_t u_1 &= \p_{x}^2 u_1 + f(u_1) - u_2,\\  
\p_t u_2 &= \eps(u_1-\gamma u_2)\,,
\end{split}
\end{equation}
where $u_1$ and $u_2$ are real functions of $x\in \R$ and $t\ge 0$, the parameters $\eps$ and $\gamma$ are chosen to be
positive and small and $f$ (the reaction term) is given by the cubic polynomial 
\begin{equation*}
f(u_1) := -u_1(u_1 - \alpha)(u_1 - 1),
\end{equation*} 
for $0<\alpha<\frac12$.  Here, an axon is modelled by a straight line 
without an internal geometric structure. 

In our work, we make a first step in taking into account 
the geometry of the axon, 
 namely, a cylindrical cable-like fiber, with electrical 
signals propagating on its surface. Thus we consider an extension of the FitzHugh-Nagumo (FHN) system on a cylindrical surface, $\cS$. The system 
has the form
\begin{equation}\label{eq:FHN}
\begin{split}
\p_t u_1 &= \Del_{\cS} u_1 + f(u_1) - u_2\,,\\  
\p_t u_2 &= \eps(u_1-\gamma u_2)\,,
\end{split}
\end{equation}
where $\Del_{\cS}$ denotes the 
Laplace-Beltrami operator on $\cS$ and  $\eps$, $\gamma$ and $f$  are the same as above. As in the original FitzHugh-Nagumo system, $u_1$ is the electrical potential across
the axon membrane (fast variable), and $u_2$ is the $Na^+$ channel activation and
inactivation parameters lumped into a single unknown (slow, recovery variable).

We call  Eq.~\eqref{eq:FHN} the {\it cylindrical  FitzHugh-Nagumo system}, or the FHNcyl system for short.
Taking formally $\cS=\R$ in Eq.~\eqref{eq:FHN} gives Eq.~\eqref{FHN}.

A solution to Eq.~\eqref{FHN} which is a function of a 
single variable, $z=x-ct$, $c>0$, and vanishes at infinity is 
called a \textbf{pulse}. One of the first results
on the existence of pulses 
is due to Hastings~\cite{Hastings_76},
who showed that when $\cS$ is the real line,
$0<\alpha<\frac12$ and $\eps$, $\gamma$ are
positive and sufficiently small, 
Eq.~\eqref{FHN} has a pulse solution,
whose speed depends on $\alpha$, $\gamma$, and $\eps$.
The pulse is obtained as a homoclinic orbit in a related 
system of ordinary differential equations.

It turns out that when $\eps>0$ is sufficiently small, 
Eq.~\eqref{FHN} 
 has at least two 
different pulse 
solutions, 
the \textbf{fast} pulse studied by Hastings,
which travels with speed 
$c_f(\eps)= \frac{\sqrt{2}}{2}(1-2\alpha)+o(\eps)$, and a \textbf{slow} pulse that travels with speed $c_s(\eps) = O(\sqrt{\eps})$ (Carpenter~\cite{Carpenter_77}, Hastings~\cite{Hastings_76, Hastings_82}, Langer~\cite{Langer_80}, Krupa, Sandstede and Szmolyan~\cite{KSS97}, Jones, Kopell and Langer~\cite{JKL91}, Arioli and Koch~\cite{AK15}). Langer~\cite{Langer_80} also proved uniqueness of the fast pulse.  Jones \cite{Jones_84}, and independently, Yanagida~\cite{Yanagida_85}, proved that the fast pulse is stable. 
 In addition, fast pulses with oscillatory tails exist and are stable (Carter and Sandstede~\cite{CS15}, Carter, de Rijk and Sandstede~\cite{CdRS16}). On the other hand, the slow pulse is always unstable (Flores~\cite{Flores}, Evans~\cite{EvansIV}, Ikeda, Mimura and Tsujikawa~\cite{IMT87}).

Existence and stability for fast pulses have been studied for 
variants of Eq.~\eqref{FHN}, where the second equation also has
a diffusion term (Cornwell and Jones~\cite{Cornwell-Jones18}, 
Chen and Choi~\cite{Chen-Choi15}, Chen and Hu~\cite{Chen-Hu14}). 
Another system that admits stable fast pulses is the 
discrete analogue of Eq.~\eqref{FHN} (Hupkes and Sandstede~\cite{HS13}, 
Schouten-Straatman and Hupkes~\cite{SSH19}, Hupkes, 
Morelli, Schouten-Straatman and Van Vleck~\cite{HMSSVV}).

There are a few results in higher dimensions. In $\R^2$, 
Mikhailov and Krinskii~\cite{MK83} and Keener~\cite{Keener86} 
studied spiral solutions of 
Eq.~\eqref{FHN}. In $N$-dimensions, Tsujikawa, Nagai, 
Mimura, Kobayashi and Ikeda~\cite{Tsujikawa_89} proved that there 
exist fast pulse solutions propagating in a one-dimensional direction. 
Such solutions are stable. 

In this paper we study solutions of the FHNcyl system, Eq.~\eqref{eq:FHN}, on 
infinitely long, thin cylindrical surfaces.
For $\cS$ a standard cylinder, the pulse solutions to Eq.~\eqref{FHN}  
are also (angle-independent) solutions to Eq.~\eqref{eq:FHN} 
and we continue to call them the pulses.
We show that
\begin{enumerate}[{\em (i)}]
	\item on a cylinder of small constant radius,
the (fast) pulses 
are stable under general perturbations of the initial condition
that depend on both spatial variables;
	\item on a warped cylinder whose radius 
is small  and varies slowly along its length, 
solutions that are initially close to
a pulse stay near the family of pulses for all time.
\end{enumerate}
Our extension, Eq.~\eqref{eq:FHN}, of Eq.~\eqref{FHN}  is geometrical rather 
than biophysical. However, 
the techniques we use are fairly robust and could be easily modified for 
more realistic second order elliptic operators describing the 
surface evolution instead of the Laplacian $\Del_{\cS}$.

\subsection{Main results}\label{sec:results}

Consider Eq.~\eqref{eq:FHN} on the standard 
cylinder of constant radius $R$ centered about the $x$-axis in $\R^3$,
$$
\cS_R := \left \{ (x,R\cos\theta, R\sin\theta) 
\in\R^3\ \big\vert\ 
x\in \R,\, \theta\in S^1
\right\}\,,
$$
where $S^1=\R/(2\pi)$ is the unit circle. 
The Laplacian on this surface is defined by
$$
\Delta_{\cS_R}= \partial_x^2 + R^{-2}\partial_\theta^2\,,
$$
and the Riemannian area element is $R\,d\theta dx$.
Clearly, the cylindrical FitzHugh-Nagumo system on
$\cS = \cS_{R}$ is invariant under translations. If
$u(x, \theta, t)$ is a solution, then so are its translates
$$
u_h(x, \theta, t) := u(x - h, \theta, t)\,,\quad h\in \R.
$$

Each pulse $\Phi$ on $\cS=\R$ defines
a smooth axisymmetric traveling 
wave solution $u(x,\theta,t)=\Phi(x-ct)$ of Eq.~\eqref{eq:FHN}
on $\cS_R$. Its speed $c$ is determined by
the parameters $\alpha$, $\gamma$, and~$\eps$.
It is a consequence of translation invariance
that all translates $\Phi_h$ of $\Phi$ are pulses
of the same speed $c$.

\bigskip Our first result concerns the stability of a 
particular fast pulse, $\Phi$, under perturbations of the
initial values that need not be axisymmetric. 
We consider mild solutions, defined by an integral equation
derived from Duhamel's formula, in the mixed Sobolev space
$$
H^{2,1}
:=\left\{u=(u_1, u_2)\in L^2(\cS_R)\!\times\!L^2(\cS_R)\ \big\vert\ 
\Delta_{\cS} u_1\in L^2(\cS_R), \p_xu_2\in L^2(\cS_R)\right\}\,.
$$
The norm on this space is denoted by $\|\cdot\|_{2,1}$.
With this notion of solution,
the initial-value problem is locally well-posed, i.e., 
for each initial value $u_0\in H^{2,1}$,
a unique mild solution exists on some positive time interval,
and this solution depends continuously on the 
initial data, see Proposition~\ref{prop:LWP}.   The following
theorem says that mild solutions which are initially close to $\Phi$ 
approach nearby translates of~$\Phi(x-ct)$ as $t\to \infty$.
In technical terms, the traveling pulse
$\Phi(x-ct)$ is orbitally asymptotically stable. 

\begin{theorem}[Stability of pulses, standard cylinder]
\label{thm:nonli-stab}
Consider Eq.~\eqref{eq:FHN} 
on the cylinder $\cS_R$ of constant radius $R\le 1$.
Fix $\alpha\in (0,\frac12)$, $\eps>0$, and $\gamma>0$ such that
the equation has a fast pulse solution $\Phi(x-ct)$.
If $\eps$ is sufficiently small, then there is
a neighborhood $\cU$ of $\Phi$ in $H^{2,1}$ such that
for every $u_0\in \cU$,
the mild solution $u(t)$ with initial value $u\big\vert_{t=0}=u_0$
exists globally in time and satisfies
\begin{equation}\label{eq:nonli-stab}
\|u(t)- \Phi_{ct+h_*} \|_{2,1} 
\le C_1 e^{-\xi t} \|u_0-\Phi\|_{2,1} \qquad (t\ge 0)
\end{equation}
for some $\xi>0$ and $h_*\in\R$ (determined by $u_0$)
with $$|h_*|\le C_2\|u_0-\Phi\|_{2,1}\,.
$$
Here, $C_1$ and $C_2$ are positive constants.
\end{theorem}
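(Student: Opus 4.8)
The plan is to follow the classical nonlinear-stability-via-spectral-gap strategy (Henry, Bates–Jones, Yanagida, Jones), adapted to the cylindrical setting by exploiting the decomposition of $L^2(\cS_R)$ into Fourier modes in the angular variable~$\theta$. First I would pass to the co-moving frame $z = x - ct$, in which the pulse $\Phi$ becomes a stationary solution and the linearization about $\Phi$ is a time-independent operator $\mathcal L$ on $H^{2,1}$. Because the angular Laplacian $R^{-2}\p_\theta^2$ commutes with everything, I would write $\mathcal L = \bigoplus_{n\in\Z} \mathcal L_n$, where $\mathcal L_0$ is exactly the linearization operator for the one-dimensional FHN pulse studied by Jones and Yanagida, and $\mathcal L_n$ for $n\neq 0$ is that operator shifted by the strictly negative constant $-R^{-2}n^2 \le -1$ in the first component. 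The key spectral input, which I would quote from the one-dimensional theory (valid for $\eps$ small), is that $\mathcal L_0$ has a simple eigenvalue at $0$ with eigenfunction $\Phi'$ (from translation invariance), and the rest of its spectrum lies in a half-plane $\{\Re\lambda \le -2\xi_0\}$ for some $\xi_0>0$. The shift by $-R^{-2}n^2$ only pushes the $n\neq 0$ blocks further left, so on the orthogonal complement of $\mathrm{span}\,\Phi'$ the full operator $\mathcal L$ has a uniform spectral gap: $\spec(\mathcal L)\setminus\{0\} \subset \{\Re\lambda \le -2\xi\}$ for some $\xi>0$ (one must check the essential spectrum of the FHN linearization is also contained in this half-plane — this is standard, coming from the asymptotic constant-coefficient operator at $z\to\pm\infty$, whose stability follows from $f'(0)<0$ and $\gamma,\eps>0$).

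Next I would set up the standard decomposition of the perturbation. Writing the mild solution as $u(z+ (\text{correction}),\theta,t) = \Phi(z) + w(z,\theta,t)$ and allowing the translation parameter $h=h(t)$ to vary, I impose the orthogonality condition $\langle w(t), \psi_0\rangle = 0$, where $\psi_0$ spans the one-dimensional kernel of the adjoint $\mathcal L^*$ (again inherited from the $n=0$ block). The implicit function theorem gives, for $w$ small, a unique such $h$; differentiating the constraint yields an ODE $\dot h = N(w)$ with $N$ quadratically small in $w$. On the complementary subspace $Y := \{w : \langle w,\psi_0\rangle = 0\}$, the semigroup $e^{t\mathcal L}$ generated by $\mathcal L$ satisfies $\|e^{t\mathcal L}w\|_{2,1} \le C e^{-\xi t}\|w\|_{2,1}$ — this linear decay estimate is the analytic heart and would be obtained from the spectral gap together with sectoriality of $\mathcal L$ on $H^{2,1}$ (the first component is governed by an analytic semigroup; the second is an ODE-type contribution handled by the $\p_x$-regularity built into the space; one checks the resolvent bounds block-by-block in $n$, with the $n\neq 0$ blocks giving even better constants). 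The mixed-norm space $H^{2,1}$ is exactly designed so that the nonlinearity $F(w) = (f(\Phi+w_1)-f(\Phi) - f'(\Phi)w_1,\,0)$, which involves only $w_1$ and no derivatives, maps $H^{2,1}\to H^{2,1}$ with $\|F(w)\|_{2,1} \le C\|w\|_{2,1}^2$ for $w$ in a bounded neighborhood — here one uses that $H^{2,1}\hookrightarrow L^\infty$ in two dimensions so the cubic terms are controlled.

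Then the argument closes by a standard Gronwall/continuity (bootstrap) bootstrap on the Duhamel integral equation
\[
w(t) = e^{t\mathcal L}w(0) + \int_0^t e^{(t-s)\mathcal L}\bigl[F(w(s)) + \dot h(s)\,\p_z(\Phi + w(s))\bigr]\,ds,
\]
projected onto $Y$. Defining $M(t) := \sup_{0\le s\le t} e^{\xi s}\|w(s)\|_{2,1}$, the linear estimate plus the quadratic bounds on $F$ and on $\dot h$ give $M(t) \le C_1\|w(0)\|_{2,1} + C\, M(t)^2$, so if $\|w(0)\|_{2,1} = \|u_0 - \Phi\|_{2,1}$ is small enough, $M(t)$ stays bounded by $2C_1\|u_0-\Phi\|_{2,1}$ for all $t$; this yields global existence (the local well-posedness of Proposition~\ref{prop:LWP} lets us continue the solution as long as the norm stays bounded) and the exponential decay $\|w(t)\|_{2,1}\le C_1 e^{-\xi t}\|u_0-\Phi\|_{2,1}$. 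Since $\dot h = N(w)$ is integrable with $\int_0^\infty |\dot h|\,ds \le C_2\|u_0-\Phi\|_{2,1}$, the shift $h(t)$ converges to a limit $h_*$ with $|h_*|\le C_2\|u_0-\Phi\|_{2,1}$, and re-expressing $w$ in terms of the original frame gives the stated estimate~\eqref{eq:nonli-stab} with $\Phi_{ct+h_*}$.

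\textbf{Main obstacle.} I expect the principal technical difficulty to be the linear decay estimate $\|e^{t\mathcal L}|_Y\|_{2,1\to 2,1}\le Ce^{-\xi t}$ with a constant \emph{uniform in $R\le 1$}, including a careful treatment of the essential spectrum of the FHN linearization in the mixed space $H^{2,1}$ and the verification that $\mathcal L$ is sectorial there despite the second (non-diffusive) component — this is where the unusual choice of $H^{2,1}$ must be justified, and where the block decomposition in $n$ must be made quantitative rather than merely qualitative. A secondary but genuine point is checking that the nonlinear term and the $\dot h\,\p_z w$ term genuinely land in $H^{2,1}$ with quadratic bounds, which relies on the Sobolev embedding into $L^\infty$ on the two-dimensional surface and on $\Phi$ being smooth with all derivatives bounded.
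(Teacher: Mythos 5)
Your overall architecture matches the paper's: moving frame, Fourier decomposition in the angular modes with the $n=0$ block reduced to the Jones--Yanagida one-dimensional spectral result and the $n\neq 0$ blocks pushed left by $-n^2R^{-2}\le -1$, Weyl-type localization of the essential spectrum, a modulation ansatz $u=\Phi_h+v$ with orthogonality to the adjoint kernel, and a Duhamel/bootstrap closure. The nonlinear half of your argument is essentially the paper's Section 3.

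There is, however, a genuine gap exactly at the point you yourself flag as the analytic heart. You propose to obtain the linear decay $\|e^{t\mathcal L}|_Y\|\lesssim e^{-\xi t}$ from ``the spectral gap together with sectoriality of $\mathcal L$ on $H^{2,1}$.'' The operator $\mathcal L$ is \emph{not} sectorial and the semigroup is \emph{not} analytic: the second component carries the transport term $c\p_z$ with no diffusion, and the constant-coefficient operator at infinity has a branch of essential spectrum behaving like $ick-\eps\gamma$ as $|k|\to\infty$, i.e.\ an unbounded vertical asymptote that no sector contains. Consequently the route ``spectral gap $\Rightarrow$ semigroup decay'' is not available by analyticity, and for merely strongly continuous semigroups the spectral mapping theorem can fail, so a spectral gap alone proves nothing about $\|e^{t\mathcal L}\|$. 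What is actually needed -- and what the paper does -- is a \emph{uniform resolvent bound} on an entire half-plane $\{\Re\lambda\ge-\sigma\}$ restricted to the range of the spectral projection $Q$, followed by an appeal to the Gearhart--Pr\"uss theorem (valid on Hilbert spaces). Establishing that uniform bound is nontrivial: the paper splits the half-plane into a compact region (handled by analyticity of the resolvent off the spectrum), a far-right half-plane (handled by dissipativity), and the strip with large imaginary part, where explicit entrywise Fourier-side estimates on $(\lambda-\bar L_0)^{-1}$ are required to show the potential term is a small perturbation. Your proposal contains no substitute for this step, and the sectoriality it relies on instead is false for this system.
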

\noindent 

Theorem~\ref{thm:nonli-stab} is proved in  Section~\ref{sec:nonli-stab}.  Though, our proof of Theorem~\ref{thm:nonli-stab} could be slightly shortened by appealing to the general Theorem 4.3.5 of \cite{KP13}, for the reader's convenience, we provide a self-contained 
proof which uses only a spectral result of~\cite{Jones_84} and~\cite{Yanagida_85}, in addition to well-known results about semigroups.
Under the assumptions of the theorem, Eq.~\eqref{eq:nonli-stab} holds
for any decay rate  $\xi$ with $\xi< \min\{\alpha,\beta,\eps\gamma\}$, 
where $\beta$ is the exponent from Lemma~\ref{lem:L0}.
The neighborhood $\cU$, as well as
the constants $C_1$, $C_2$ and $\xi$ depend only on the parameters
$\alpha$, $\gamma$, and $\eps$.

The translates of $\Phi$ form a one-dimensional manifold of pulses
\begin{equation}\label{eq:manifold}
\cM := \{ \Phi_h \;\vert\; h\in \R \}\,.
\end{equation}
Denote by $\dist(v, \cM):=\inf_h \| v \!-\! \Phi_h \|_{2,1}$
the distance of $v\in H^{2,1}$ from the manifold.
By translation invariance, the conclusion of
Theorem~\ref{thm:nonli-stab}
yields a tubular neighborhood
$\cW= \{w\in H^{2,1} \mid \dist(w,\cM)<\eta\}$ such that 
$$\dist(u(t),\cM)\le C_1e^{-\xi t} \dist(u_0,\cM)$$
for all mild solutions with initial values in $\cM$.
As $t\to\infty$, each solution converges to a 
particular traveling pulse $\Phi(x-ct-h_*)$.
\bigskip

For our second result, we consider
{\bf warped} cylindrical surfaces, 
defined as graphs over the standard one, with a variable radius
$\rho(x)$,
\begin{equation}
\label{eq:def-Srho}
\cS_\rho := \left \{(x,\rho(x)\cos\theta, \rho(x)\sin\theta)
\in\R^3 \ 
\big\vert\ 
x\in \R,\theta\in S^1 \right\}\,.
\end{equation}
On $\cS_\rho$, the Laplace-Beltrami operator 
is given by
\begin{equation}\label{eq:Delta-rho}
\Del_{\cS_{\rho}} := 
\frac{1}{\sqrt{g(x)}} \p_x
\bigg( \frac{\rho(x)}{\sqrt{1 + \rho'(x)^2}} \p_x \bigg)
+ \rho^{-2}(x) \p^2_{\theta}\,,
\end{equation}
where $g(x):=\rho(x)^2(1 + \rho'(x)^2)$ is the squared 
Riemannian density.

We identify functions $u$ on $\cS_\rho$
with functions on $\cS_R$ via the coordinate
map \begin{equation}
\label{eq:diffeo}
\psi_\rho(x,R\cos\theta, R\sin\theta)
= \bigl(x, \rho(x)\cos\theta,\rho(x)\sin\theta\bigr)
\end{equation}
from $\cS_R$ to $\cS_\rho$.
Under the assumption that $\rho$ is twice continuously differentiable,
positive, bounded, and bounded away from zero, 
$\psi_\rho$  is a diffeomorphism of class $C^2$.
Via this identification, the norms $\|\cdot\|$ and
$\|\cdot\|_{2,1}$ on the standard cylinder are pushed 
forward to apply to functions on $\cS_\rho$.

When $\rho$ is non-constant, Eq.~\eqref{eq:FHN} cannot be expected
to have pulse solutions.  However, if
$\rho$ is almost constant, then there are
near-pulse solutions that remain in a neighborhood of $\cM$
for all time. Generally,
these solutions do not stay close to any particular pulse, but
move slowly along the manifold:

\begin{theorem}[Near-pulse solutions, warped cylinder]
	\label{thm:persist}
Consider the FHN system
on a cylinder $\cS_\rho$ of variable radius, as in 
Eq.~\eqref{eq:def-Srho}, and let $\alpha$, $\eps$ and  
$\gamma$ be as in Theorem~\ref{thm:nonli-stab}. 
There are a constant $\delta_*>0$ and a tubular
neighborhood $\cW$ of $\cM$ in $H^{2,1}$, with the following properties:
If $R\le 1$ and $\delta:=R^{-1}\|\rho-R\|_{C^2}\le \delta_*$,
then for every $u_0\in\cW$, the unique mild
solution $u(t)$
with initial value $u\big\vert_{t=0}=u_0$
exists globally in time, and satisfies
\begin{equation}\label{eq:persist}
	\dist(u(t), \cM) \leq C_1 e^{-\xi t}\dist(u_0,\cM) + C_2\delta\,,
\qquad (t\ge 0)
	\end{equation}
for some positive constants $C_1$, $C_2$, and $\xi$.
\end{theorem}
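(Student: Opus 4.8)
The plan is to transport the problem to the standard cylinder $\cS_R$ through the diffeomorphism $\psi_\rho$ and regard the warping as a small, but genuinely time-dependent, perturbation of the translation-invariant equation treated in Theorem~\ref{thm:nonli-stab}. Under this identification the Laplace--Beltrami operator becomes $\Del_{\cS_\rho}=\Del_{\cS_R}+\cB_\rho$, and, reading off Eq.~\eqref{eq:Delta-rho}, $\cB_\rho$ is a second-order operator in the $x$-variable plus a $\p_\theta^2$-correction with coefficients assembled from $\rho,\rho',\rho''$ that vanish when $\rho\equiv R$; it acts only on the first component of $u=(u_1,u_2)$. The first step is to record the quantitative bounds (with $\delta=R^{-1}\|\rho-R\|_{C^2}$)
\[
\|\cB_\rho w_1\|_{L^2(\cS_R)}\le C\delta\,\|w_1\|_{H^2},\qquad \|\cB_\rho\Phi_1\|_{H^1(\cS_R)}\le C\delta,
\]
with $C$ absolute and uniform for $R\le 1$. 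The first bound uses that $\|\Del_{\cS_R}w_1\|$ controls $\|\p_x^2w_1\|$ and $\|R^{-2}\p_\theta^2w_1\|$ separately (compare Fourier multipliers, using $R\le 1$); the second uses that the only coefficient of $\cB_\rho$ that is merely continuous is the coefficient of $\p_xw_1$, which is of size $O(\delta)$, so that $\cB_\rho\Phi_1$ — with $\Phi$ smooth, $\theta$-independent, and exponentially decaying — still lies in $H^1$ even though $\rho$ is only $C^2$. On $\cS_R$ the transported FHNcyl system reads $\p_tu=\mathcal{N}(u)+\cB_\rho u$, where $\mathcal{N}$ is the unwarped right-hand side of Theorem~\ref{thm:nonli-stab}.

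Next I would repeat the modulation setup used to prove Theorem~\ref{thm:nonli-stab}, carrying the extra term. Writing $u(x,\theta,t)=\Phi_{h(t)}(x,\theta)+v(x,\theta,t)$ and passing to the comoving variable $\xi=x-h(t)$, the traveling-wave identity $\mathcal{N}(\Phi)=-c\,\p_\xi\Phi$ gives
\[
\p_tv=Lv+(\dot h-c)\bigl(\p_\xi\Phi+\p_\xi v\bigr)+\mathcal{R}(v)+\cB_\rho^{(t)}(\Phi+v),
\]
where $L$ is the moving-frame linearization of $\mathcal{N}$ about $\Phi$ (whose spectrum, by \cite{Jones_84,Yanagida_85} as used for Theorem~\ref{thm:nonli-stab}, is a simple eigenvalue $0$ with eigenfunction $\p_\xi\Phi$ and the rest in $\Re\lambda\le-\xi$, uniformly for $R\le 1$), $\mathcal{R}(v)$ collects the quadratic and cubic terms from the cubic $f$ (so $\|\mathcal{R}(v)\|_{2,1}\le C\|v\|_{2,1}^2$ on a bounded set, since $H^2$ is an algebra on the surface), and $\cB_\rho^{(t)}$ is $\cB_\rho$ with coefficients frozen at $x=\xi+h(t)$ — explicitly $t$-dependent, which is the price of $\cS_\rho$ no longer being translation invariant. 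I fix $h(t)$ by requiring $v(t)$ to stay in the spectral complement $X_s$ of $\Ker L=\mathrm{span}\{\p_\xi\Phi\}$, i.e.\ $\langle v(t),\psi_0\rangle=0$ with $\psi_0$ spanning $\Ker L^{*}$; by the implicit function theorem this determines $h(t)$ while $u(t)$ stays in a thin tube around $\cM$, with $\|v(0)\|_{2,1}\le C\dist(u_0,\cM)$, and projecting the $v$-equation onto $\psi_0$ yields $|\dot h(t)-c|\le C(\|v(t)\|_{2,1}^2+\delta)$, the denominator $\langle\p_\xi\Phi+\p_\xi v,\psi_0\rangle$ being bounded away from $0$ for $\|v\|_{2,1}$ small because the eigenvalue $0$ of $L$ is algebraically simple.

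Now I would run the Duhamel/continuation argument on $v$. The key bookkeeping point is that the two potentially dangerous terms, $(\dot h-c)\p_\xi v$ and $\cB_\rho^{(t)}v$, are kept inside the linear evolution rather than treated as forcing: both $(\dot h-c)\p_\xi$ and $\cB_\rho^{(t)}$ are $L$-bounded with relative bound $O(\delta+\dist(u_0,\cM))$, so for $\delta$ and the tube radius small the non-autonomous operator $L+(\dot h-c)\p_\xi+\cB_\rho^{(t)}$ is a uniformly small sectorial perturbation of $L$; projecting onto $X_s$ (the cross-terms are $O(\delta)$ since $v\in X_s$), the associated evolution family $U(t,s)$ on $X_s$ satisfies $\|U(t,s)\|_{2,1\to2,1}\le Ce^{-\xi(t-s)}$ together with the parabolic smoothing $\|U(t,s)g\|_{2,1}\le C(1+(t-s)^{-1/2})e^{-\xi(t-s)}\|g\|$ for $g$ in the first component (here $\xi$ is any rate below the spectral gap, absorbing the $O(\delta)$ loss). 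The genuine forcing is then $(\dot h-c)\p_\xi\Phi+\mathcal{R}(v)+\cB_\rho^{(t)}\Phi$: the first two lie in $H^{2,1}$ and are $O(\|v\|_{2,1}^2+\delta)$, while $\cB_\rho^{(t)}\Phi\in H^1$ is $O(\delta)$ and is absorbed with one half-power of smoothing. Under the bootstrap hypothesis $\dist(u(\tau),\cM)<\eta$ on $[0,t)$, Duhamel gives
\[
\|v(t)\|_{2,1}\le Ce^{-\xi t}\|v(0)\|_{2,1}+C\int_0^t\bigl(1+(t-s)^{-1/2}\bigr)e^{-\xi(t-s)}\bigl(\|v(s)\|_{2,1}^2+\delta\bigr)\,ds,
\]
and, choosing first $\eta$ small, then $\delta_*$ small, then the bootstrap constant large, a standard continuation argument closes the estimate for all $t\ge0$ and yields $\|v(t)\|_{2,1}\le C_1e^{-\xi t}\dist(u_0,\cM)+C_2\delta$. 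Since $\Phi_{h(t)}\in\cM$, this is Eq.~\eqref{eq:persist}. Global existence of the mild solution follows because this a priori bound keeps $u(t)$ in a bounded subset of $H^{2,1}$ (recall $\|\Phi_h\|_{2,1}$ is independent of $h$), so by local well-posedness on $\cS_\rho$ — which holds by the same argument as Proposition~\ref{prop:LWP}, the elliptic part $L+\cB_\rho^{(t)}$ still being sectorial — the solution never ceases to exist.

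The main obstacle is the loss of translation invariance, which has two faces. First, $\cB_\rho$ is genuinely time-dependent in the pulse frame, so one cannot diagonalize and must instead rely on robustness of the exponential dichotomy under small non-autonomous perturbations, keeping $v$ in the fixed complement $X_s$ via the modulation and paying only $O(\delta)$ cross-terms. Second — and this is why the conclusion is weaker than in Theorem~\ref{thm:nonli-stab} — the residual $\cB_\rho^{(t)}\Phi$ is a forcing of size $O(\delta)$ that is never damped, so $v(t)$ relaxes only to size $O(\delta)$ and $\dot h-c$ stays $O(\delta)$; consequently the solution slides along $\cM$ and need not converge to any single traveling pulse, exactly as the statement allows. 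A secondary technical point, already flagged above, is the low regularity of $\rho$: because $\rho$ is only $C^2$ one must verify that the geometric residual nevertheless lies in $H^1$, which is precisely what makes the half-power smoothing estimate applicable and keeps the whole argument inside the $H^{2,1}$ topology.
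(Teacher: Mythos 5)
Your overall strategy --- transport to $\cS_R$, modulate along $\cM$, and treat the warping as a small perturbation of the linearized evolution --- is reasonable in outline, and your relative bound $\|(\Delta_{\cS_\rho}-\Delta_{\cS_R})w_1\|_{L^2}\lesssim\delta\|w_1\|_{H^2}$ matches what the paper uses. But the route through the moving frame runs into a genuine obstruction that the paper deliberately avoids. The moving-frame linearization $L$ is \emph{not} sectorial and $e^{tL}$ is \emph{not} an analytic semigroup: the second component of the system has no diffusion, and the essential spectrum of $\bar L$ contains a branch $\lambda_+(k,0)\sim ick-\eps\gamma$ escaping to $\pm i\infty$ along a vertical line (see Subsection~\ref{subsec:semigroup}). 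Consequently your two key assertions --- that $L+(\dot h-c)\p_z+(\Delta_{\cS_\rho}-\Delta_{\cS_R})$ is a ``uniformly small sectorial perturbation of $L$'', and that the associated evolution family obeys the parabolic smoothing bound $\|U(t,s)g\|_{2,1}\lesssim(1+(t-s)^{-1/2})e^{-\xi(t-s)}\|g\|$ --- are unsupported, and the second is false for the coupled system. This is precisely why the paper abandons the moving frame for this theorem: in the static frame the operator $A_\rho$ \emph{is} sectorial (Lemma~\ref{lem:sectorial}), the perturbation is time-independent, the semigroup difference is controlled by a contour integral over resolvent differences (Lemma~\ref{lem:diff-res}, Proposition~\ref{prop:difference}), and then Gr\"onwall on a finite window combined with iteration of the contraction supplied by Theorem~\ref{thm:nonli-stab} yields Eq.~\eqref{eq:persist}.

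A second, independent gap: your claim that $(\Delta_{\cS_\rho}-\Delta_{\cS_R})\Phi_1$ lies in $H^1(\cS_R)$ with norm $O(\delta)$ fails under the standing assumption $\rho\in C^2$. The first-order coefficient of $\Delta_{\cS_\rho}-\Delta_{\cS_R}$ contains $\rho''$ and is therefore merely continuous; multiplying it by the smooth decaying function $\p_x\Phi_1$ produces a function in $L^2$ but not, in general, in $H^1$ (its distributional derivative would involve $\rho'''$). Since your Duhamel argument absorbs this forcing with exactly half a power of smoothing, losing that half power would force the non-integrable weight $(t-s)^{-1}$, and the bootstrap does not close. The paper sidesteps this by measuring the perturbation only in $H^{0,1}$, whose first component is just $L^2$ (see the bound on $W$ in the proof of Lemma~\ref{lem:diff-res}). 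To rescue your approach you would need either stronger hypotheses on $\rho$ (say $C^3$) together with a genuine smoothing theory for the non-autonomous, non-analytic evolution family, or you should switch to the paper's static-frame comparison argument.
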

Under the assumptions of the theorem,
initial values $u_0\in\cM$ give rise
to near-pulse solutions that satisfy 
$\sup_{t>0}\dist(u(t),\cM)\le C_2\delta$.
The Theorem~\ref{thm:persist} will be proved in Section~\ref{sec:warped}.
The tubular neighborhood $\cW$, as well
as the constants $\delta_*$, $C_1$, $C_2$, and $\xi$ depend only on 
the parameters $\alpha$, $\gamma$, and $\eps$.
In general the value of the exponential decay rate
$\xi$ in Eq.~\eqref{eq:persist} lies below 
that of Eq.~\eqref{eq:nonli-stab}.

\subsection{Outline of the arguments} The key
idea is to write a solution near a pulse $\Phi$ as
the superposition of a modulated pulse with a transversal 
fluctuation.  On the standard cylinder $\cS_R$,
Theorem~\ref{thm:nonli-stab} concludes that
(for suitable values of the parameters) the fluctuation decreases 
exponentially over time, as the solution 
settles on a nearby translate of~$\Phi$.
In the proof, we first establish
linearized stability (Section~\ref{sec:lin-est}),
and then apply a fixed point argument for
the nonlinear evolution (Section~\ref{sec:nonli-stab}). 

On the warped cylinder $\cS_\rho$, 
Theorem~\ref{thm:persist} provides bounds
on the fluctuation of near-pulse solutions
in terms of the distance of the variable radius $\rho$ 
from the constant $R$. In the proof,
we use the standard cylinder for reference,
and combine a perturbation result for the linearization
with the exponential orbital stability proved in Theorem~\ref{thm:nonli-stab}.

We briefly describe some key technical aspects of the proofs.
Given a fast pulse~$\Phi$, let $c$ be its speed,
and consider the FHNcyl system on the standard 
cylinder of radius $R$. We employ a moving frame
($z=x-ct$),
where the pulse is stationary.  In this frame 
the FHNcyl system becomes
\begin{equation}\label{eq:FHN_moving}
\begin{split}
\p_t u_1 &= \Delta_{\cS_R} u_1 + c \p_z u_1 + f(u_1) - u_2\\  
\p_t u_2 &= c \p_z u_2 + \eps (u_1 - \gamma u_2)\,.
\end{split}
\end{equation}

Denote by $L$ the linearization of Eq.~\eqref{eq:FHN_moving}
about the stationary solution $\Phi$. 
In Section~\ref{sec:lin-est}, we prove that 
the semigroup generated by $L$ decays exponentially 
in directions transversal to 
the tangent space of $\cM$ at $\Phi$, that is,
\begin{equation*}
\| e^{tL}(1-P) \|_{2,1} \lesssim e^{-\sigma t}
\qquad (t\ge 0)\,,
\end{equation*}
for some positive constant $\sigma$.
Here, $P$ is a projection onto the tangent space
that commutes with $L$, and $\|\cdot\|_{2,1}$ denotes
the operator norm on $H^{2,1}$.
The proof of this estimate is the most challenging part of the paper.  
Since $L$
is not self-adjoint, $P$ is not an orthogonal projection.
Noting that the tangent space of $\cM$ at $\Phi$ 
is spanned by the derivative $\tau:=-\p_z \Phi$, 
and that $L\tau=0$, we construct $P$ as the spectral 
projection associated with the zero eigenvalue of $L$.
The decay estimate follows from the fact
that the remainder of the spectrum lies in the left 
half-plane. 

In Section~\ref{sec:nonli-stab}, we establish the nonlinear stability
of the pulse (cf.  \cite{KP13}) and prove Theorem~\ref{thm:nonli-stab}.
We work again in the moving frame, and
show that every mild solution of Eq.~\eqref{eq:FHN_moving}
that starts out sufficiently close to $\Phi$ converges
exponentially to a translated pulse~$\Phi_{h_*}$. 
As explained above, we decompose such a solution into a modulated pulse 
(moving on $\cM$) and a fluctuation (transversal to $\cM$). 
Concretely, we write $u= \Phi_{h}+v$, choosing $h$ in such a way that
$$
\Phi_{h}-\Phi \approx P (u-\Phi)\,,\qquad v \approx
(1-P)(u-\Phi)
$$
with errors of order $\|u-\Phi\|_{2,1}^2$
in a neighborhood of $\Phi$.  This transforms 
Eq.~\eqref{eq:FHN_moving} into an equation for
$v(t)$ in the canonical form
$$
\p_t v = Lv + N(v,h) \,,
$$
where $N(v,h)$ is of order 
$(|h|+\|v\|_{2,1})\,\|v\|_{2,1}$, coupled to an ordinary differential
equation for the evolution of $h$.
With the help of the linearized decay estimate 
from Section~\ref{sec:lin-est}, we prove the bounds
$$
\|v(t)\|_{2,1}\lesssim e^{-\xi t}\|v_0\|\,,\qquad
|h(t)-h(0)|\lesssim e^{-\xi t}\|v_0\|^2\qquad
(t\ge 0)
$$
for any $\xi$ with $\xi<\sigma$, i.e., the fluctuation
decays exponentially, while the modulation converges.
Since $\|v_0\|_{2,1}\lesssim \|u_0-\Phi\|_{2,1}$
and $h_0\lesssim \|u_0-\Phi\|_{2,1}$,
this yields the conclusion of 
Theorem~\ref{thm:nonli-stab}.

Section~\ref{sec:warped} contains the
proof of Theorem~\ref{thm:persist}. We consider the
variable radius $\rho(x)$ of a warped cylinder 
as a perturbation of $R$, and then appeal to 
Theorem~\ref{thm:nonli-stab}.  The basis for the 
argument is an estimate for the linearized evolution
on $\cS_\rho$.
Note that in the moving frame which we used on the standard cylinder,
the variation of the radius amounts to a {\em time-dependent}
perturbation of the principal part,
as $\Delta_{\cS_R}$ becomes $\Delta_{\cS_{\rho(z+ct)}}$. 
To avoid this issue, we study the perturbation
in the static frame, and linearize the FHNcyl system
about the zero solution instead of the traveling
pulse.  It turns out that the
linearized operator, $A_\rho$, is sectorial.
Therefore, we can represent the semigroup $e^{t A_\rho}$ by
an absolutely convergent contour integral, and control the
perturbation via resolvent estimates.
We use Gr\"onwall's inequality to extend these 
perturbation estimates to the nonlinear evolution 
generated by the FHNcyl system on $\cS_\rho$. In combination with
the exponential decay of fluctuations
for near-pulse solutions on $\cS_R$
that was proved in Theorem~\ref{thm:nonli-stab}, this yields
Theorem~\ref{thm:persist}.

\subsection{Preliminaries and notation}

We make the standing assumption that $\rho$ is of class $C^2$,
positive, bounded, and bounded away from zero.
Also, we assume that $\alpha\in (0,\frac12)$, and
that $\gamma>0$, $\eps>0$ are small enough that
the FHNcyl system~\eqref{eq:FHN} admits a 
fast pulse solution, $\Phi$.
Furthermore, we assume that $\eps$ is small enough so that
the spectral results of~\cite{Jones_84} and~\cite{Yanagida_85} apply.

On a cylindrical surface $\cS_\rho$, the FHNcyl system takes the form
\begin{equation}\label{eq:FHN_static}
\begin{split}
\p_t u_1 &= \Delta_{\cS_{\rho}} u_1 + f(u_1) - u_2 \\
\p_t u_2 &= \eps (u_1 + \gamma u_2)\,,
\end{split}
\end{equation}
where the Laplace-Beltrami operator $\Delta_{\cS_{\rho}}$
is given by Eq.~\eqref{eq:Delta-rho}. 
Denote the right hand side of Eq.~\eqref{eq:FHN_static}
by $F_\rho(u)$. 

We consider the 
initial-value problem
\begin{equation}\label{eq:ivp-stat}
\p_t u = F_\rho(u), \qquad u\vert_{t=0} = u_0
\end{equation}
on the space $H^{2,1}$.
The principal part of $F_{\rho}$ is given by its
G\^ateaux derivative
\begin{equation} \label{eq:A} 
A_\rho:= dF_\rho(0)
= \begin{pmatrix} \Delta_{\cS_{\rho}} -\alpha & -1 \\
\eps & - \eps\gamma 
\end{pmatrix}\,.
\end{equation}
Since $A_\rho$ is a bounded perturbation
of the diagonal operator that acts as $\Delta_{\cS_\rho}$
on the first component, and vanishes on the second,
it generates an analytic semigroup
$e^{tA_\rho}$ in $L^2$. We will show in Lemma~\ref{lem:domain-A} 
that the semigroup restricts to a uniformly bounded
analytic semigroup on the dense invariant subspace $H^{2,1}$. 

We start out by verifying that the initial-value problem
for Eq.~\eqref{eq:FHN_static} is well-posed 
locally in time.  Since $F(0)=0$, we
can expand $F(u)= \p_t u = A_{\rho} u + N(u)$,
where $A_{\rho} = dF_\rho(0)$ was defined in 
Eq.~\eqref{eq:A}, and
\begin{equation}
\label{eq:N}
N(u) := F_\rho (u) - A_{\rho} u = 
\begin{pmatrix}
-u_1^3 +(\alpha+1)u_1^2 \\
0
\end{pmatrix}.
\end{equation}
If $u(t)$ is a classical solution of
Eq.~\eqref{eq:FHN}, then by Duhamel's
formula it also solves the integral equation
\begin{equation}\label{eq:Duhamel-F}
u(t)= e^{tA_\rho}u_0 + \int_0^te^{(t - s)A_\rho}N(u(s))\, ds
=: \cF_\rho(u)(t)\,. 
\end{equation}

By definition, a {\bf mild solution} of \eqref{eq:ivp-stat}
is a strongly continuous function $u(t)$ taking
values in $H^{2,1}$ that solves the fixed point problem
$u=\cF_\rho(u)$ in the space $C([0, T]; H^{2,1})$ 
for some $T>0$. Note that since $H^{2,1}$
is contained in the domain of $L$, the time derivative of a 
mild solution lies in $L^2$ and satisfies
Eq.~\eqref{eq:FHN} in $L^2$. Moreover, smooth initial
values rise to classical solutions.

Since the cylindrical surface has dimension 2, 
the Sobolev space $H^2(\cS_R)$ is a Banach algebra.
It follows directly from the continuity of the multiplication
that $N$, as a polynomial in the first component, 
is locally Lipschitz in $H^{2,1}$. Explicitly, for
every $\eta>0$ there exists a constant $C_\eta>0$ 
(which depends on $\alpha$, $\gamma$, and $\eps$, as well as $\eta$)
such that 
\begin{equation}
\label{eq:Lip-N}
\| N(u) - N(w) \|_{2,1}  \leq C_\eta
\| u_1 - w_1 \|_{H^2}
\end{equation}
for all $u,w$ with $\|u_1\|_{H^2},\|w_1\|_{H^2}\le \eta$.
From here, local well-posedness of the
initial-value problem follows by standard
methods.  For the sake of completeness, we construct
the mild solution of Eq.~\eqref{eq:FHN_static}, as follows.

\begin{proposition}[Local well-posedness]
	\label{prop:LWP}
Assume that $\rho$ is of class $C^2$, bounded, and bounded away
from zero. Then for each $u_0\in H^{2,1}$, 
there exists $T>0$ (depending on $\|u_0\|_{2,1}$)
such that Eq.~\eqref{eq:FHN_static}  on $\cS_\rho$
has a unique mild solution $u$ in $C([0,T],H^{2,1})$
with initial condition $u\vert_{t=0}=u_0$.
The solution depends continuously on $u_0$.
\end{proposition}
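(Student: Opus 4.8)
The plan is to prove local well-posedness via the Banach fixed point theorem applied to the map $\cF_\rho$ defined in Eq.~\eqref{eq:Duhamel-F}, working in the complete metric space $X_T := C([0,T];H^{2,1})$ equipped with the sup-in-time norm. First I would fix $\eta := 1 + \|u_0\|_{2,1}$ and let $C_\eta$ be the Lipschitz constant from Eq.~\eqref{eq:Lip-N}. Since $A_\rho$ generates a uniformly bounded analytic semigroup on $H^{2,1}$ (by Lemma~\ref{lem:domain-A}), say $\|e^{tA_\rho}\|_{2,1}\le M$ for all $t\ge 0$, the map $t\mapsto e^{tA_\rho}u_0$ is strongly continuous, and for $u\in X_T$ the Duhamel integral $\int_0^t e^{(t-s)A_\rho}N(u(s))\,ds$ is well-defined and continuous in $t$ because $N(u(s))$ is continuous in $s$ (it is a fixed polynomial in $u_1(s)$, and multiplication is continuous on the Banach algebra $H^2(\cS_R)$). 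Hence $\cF_\rho$ maps $X_T$ into itself.

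The core estimates are the self-map and contraction properties on the closed ball $B := \{u\in X_T : \sup_{t\le T}\|u(t)-u_0\|_{2,1}\le 1\}$, which guarantees $\|u_1(t)\|_{H^2}\le\eta$ so that Eq.~\eqref{eq:Lip-N} applies. For $u\in B$,
\begin{equation*}
\|\cF_\rho(u)(t)-u_0\|_{2,1}\le \|(e^{tA_\rho}-I)u_0\|_{2,1} + \int_0^t M\,\|N(u(s))\|_{2,1}\,ds \le \|(e^{tA_\rho}-I)u_0\|_{2,1} + M T\,(\,\|N(0)\|_{2,1}+C_\eta\eta\,).
\end{equation*}
The first term tends to $0$ as $t\to 0^+$ by strong continuity of the semigroup, and the second is $O(T)$, so both can be made $\le\frac12$ by choosing $T$ small (depending only on $\|u_0\|_{2,1}$, since $\eta$ and $C_\eta$ do). For the contraction, if $u,w\in B$ then
\begin{equation*}
\|\cF_\rho(u)(t)-\cF_\rho(w)(t)\|_{2,1}\le \int_0^t M\,\|N(u(s))-N(w(s))\|_{2,1}\,ds \le M C_\eta T\,\sup_{s\le T}\|u(s)-w(s)\|_{2,1},
\end{equation*}
and shrinking $T$ further makes $M C_\eta T\le\frac12$, so $\cF_\rho$ is a $\frac12$-contraction on $B$. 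The Banach fixed point theorem then yields a unique fixed point $u\in B$, which is the desired mild solution; uniqueness in all of $X_T$ (not just $B$) follows by a standard continuation/Gr\"onwall argument or by noting any mild solution stays in a slightly larger ball for short time and then applying the contraction estimate on $[0,T']$ with $T'$ small.

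Finally, continuous dependence on $u_0$ follows from the same Lipschitz structure: if $u,\tilde u$ are the mild solutions with data $u_0,\tilde u_0$ on a common interval $[0,T]$ (which can be chosen uniform for $\tilde u_0$ near $u_0$, since $T$ depends only on the norm of the data), then
\begin{equation*}
\|u(t)-\tilde u(t)\|_{2,1}\le M\|u_0-\tilde u_0\|_{2,1} + M C_\eta\int_0^t\|u(s)-\tilde u(s)\|_{2,1}\,ds,
\end{equation*}
and Gr\"onwall's inequality gives $\sup_{t\le T}\|u(t)-\tilde u(t)\|_{2,1}\le M e^{M C_\eta T}\|u_0-\tilde u_0\|_{2,1}$. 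The only real subtlety—and the step I would be most careful about—is making sure all constants ($M$, $C_\eta$, hence $T$) depend only on $\|u_0\|_{2,1}$ and the fixed parameters, so that the construction is genuinely local and uniform on bounded sets of initial data; this relies on Lemma~\ref{lem:domain-A} for the uniform semigroup bound on $H^{2,1}$ and on the Banach algebra property of $H^2(\cS_R)$ for the Lipschitz bound, both of which are available. Everything else is routine.
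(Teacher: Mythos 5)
Your proposal follows essentially the same route as the paper: Duhamel's formula, the uniform semigroup bound on $H^{2,1}$, the local Lipschitz estimate \eqref{eq:Lip-N} coming from the Banach algebra property of $H^2(\cS_R)$, a Banach fixed point argument in $C([0,T];H^{2,1})$, and Gr\"onwall's inequality for continuous dependence. The one point to fix is your choice of the ball $B$ centered at $u_0$: the self-map condition then requires $\|(e^{tA_\rho}-I)u_0\|_{2,1}\le\frac12$, and while this does tend to $0$ as $t\to 0^+$ by strong continuity, the rate is \emph{not} uniform over bounded sets of initial data (the semigroup is strongly, not uniformly, continuous, and $A_\rho u_0$ lands only in $H^{0,1}$, so you cannot trade the difference for $t\|A_\rho u_0\|_{2,1}$). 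Consequently your $T$ depends on $u_0$ itself rather than only on $\|u_0\|_{2,1}$, contrary to what the proposition asserts and to what your continuous-dependence step assumes when it takes a common existence interval for nearby data. The paper sidesteps this by working in the ball $\{u:\|u(t)\|_{2,1}\le\eta\}$ centered at the origin with $\eta=2C_0\|u_0\|_{2,1}$, so that the self-map condition reduces to $\|e^{tA_\rho}u_0\|_{2,1}\le C_0\|u_0\|_{2,1}$ plus the $O(T)$ integral term, and $T=(2C_0C_\eta)^{-1}$ genuinely depends only on the norm of the data. Everything else in your argument is sound and matches the paper's proof.
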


\begin{proof}
We proceed by Picard iteration.
Given $u_0\in H^{2,1}$, fix $\eta>0$ and $T>0$ (to be
specified below)  and consider
$$
\cB:= \left\{ u\in C([0,T],H^{2,1})\ \big\vert\ 
\|u(t)\|_{2,1}\le \eta\ \text{for all}\ 0\le t\le T\right\}\,,
$$
equipped with the norm $\|u\|_T:=\sup_{0\le t\le T} \|u(t)\|_{2,1}$.

The map $\cF_\rho$ defined by Eq.~\eqref{eq:Duhamel-F}
is Lipschitz continuous on $\cB$,
\begin{align*}
\| \cF_\rho(u)- \cF_\rho(w)\|_T
& \le \sup_{0\le t\le T} \int_0^t \|e^{(t-s)A_\rho}\|_{2,1}
\|N(u(s))-N(w(s))\|_{2,1}\, ds\\
&\le C_0C_\eta T\,\|v-w\|_T\,,
\end{align*}
where $C_0:=\sup_{t\ge 0} \|e^{tA_\rho}\|_{2,1}$,
and $C_\eta$ is as in Eq.~\eqref{eq:Lip-N}.
Moreover,
$$ \|\cF_\rho(0)\|_T =\sup_{0\le t\le T} \|e^{tA_\rho}u_0\|_{2,1}
\le C_0\, \|u_0\|_{2,1}\,. 
$$

Choose $\eta=2C_0\|u_0\|_{2,1}$, and $T=(2C_0C_\eta)^{-1}$.
Then $\cF_\rho$ has Lipschitz constant $\frac12$
and maps $\cB$ into itself. By Banach's contraction
mapping theorem, $\cF_\rho$ has a unique fixed point in $\cB$,
which provides the desired mild solution of 
Eq.~\eqref{eq:FHN_static}.

Let $w(t)$ be another mild solution, whose
initial value $w_0:=w\big\vert_{t=0}$ satisfies 
$\|w_0\|_{2,1}<\eta$. The difference between the 
solutions is bounded by
\begin{align*}
\|u(t)-w(t)\|_{2,1}
&\le \|e^{tA_\rho}(u_0-w_0)\|_{2,1}
+\int_0^t \|e^{(t-s)A_\rho}\bigl(N(u(s))-N(w(s))\bigr)\|_{2,1}\, ds\\
&\le C_0\|u_0-w_0\|_{2,1}
+ C_0C_\eta\int_0^t\|u(s)-w(s)\|_{2,1}\, ds\,,
\end{align*}
so long as $\max\{\|u(s)\|_{2,1}, \|w(s)\|_{2,1}\}\le \eta$
for all $0\le s\le t$. By Gr\"onwall's inequality,
$$
\|u(t)-w(t)\|_{2,1}\le C_0 e^{C_0C_\eta t}\|u_0-w_0\|_{2,1}\,.
$$
This proves continuous dependence on initial data.
\end{proof}

Consider now the FHNcyl system on the standard cylinder 
$\cS_R$. By translation invariance, the linearization
$A_R$ commutes with translations 
in space and time. In the moving frame,
as described by Eq.~\eqref{eq:FHN_moving},
the principal part of the system is given by $\bar L:=A_R + 
c\partial_z$.
As a sum of commuting operators, $\bar L$ 
generates a semigroup $e^{t\bar L}$ on $L^2$ that acts as
$$
\bigl(e^{t\bar L}u\bigr)(z) = \bigl(e^{tA_R}u\bigr)(z+ct)\,.
$$
Like the group of translations, the semigroup $e^{t\bar L}$
is strongly continuous but not analytic.
Since translations are isometries of $H^{2,1}$,
$ \|e^{t\bar L}\|_{2,1}=\|e^{tA}\|_{2,1}$ for all
$t>0$. In particular, mild solutions
of Eq.~\eqref{eq:FHN_moving} are equivalent
to mild solutions of Eq.~\eqref{eq:FHN} on $\cS_R$ 
via the transformation $z=x-ct$.

\bigskip A general remark on the use of constants:
In our estimates, it is understood that
constants may vary from equation to equation, 
and depend on the fixed parameters $\alpha$, $\gamma$, $\eps$
of Eq.~\eqref{eq:FHN}. 
We frequently use the notation $\lesssim$ and $\gtrsim$
for the respective inequalities up to  
such constants. Dependence on other parameters,
including $R$, $\rho$, and $u_0$ will be made explicit.


\subsection*{Acknowledgements}
It is a pleasure to thank Mary Pugh and Adam Stinchcombe for many stimulating discussions and the third author is grateful to Daniel Sigal for very helpful discussions of the mechanism of propagation of pulses in axons. 
 The research for this paper
 was supported in part by NSERC through Discovery Grant No. 311685 (A.B.) and Grant No. NA7901
 (I.M.S.).

\section{Linearized stability on the standard cylinder}
\label{sec:lin-est}

Fix a fast pulse $\Phi$ on $\cS_R$,
and let $\cM$ be the manifold of its translates 
defined in Eq.~\eqref{eq:manifold}. 
By definition, $\Phi$ is an axisymmetric traveling
wave solution of Eq.~\eqref{eq:FHN}.
In this section we study the linear stability of $\Phi$.

In the moving frame where $\Phi$ is stationary, the system
is given by Eq.~\eqref{eq:FHN_moving}. Let
$G(u)=F_R(u)+\partial_z u$ be the right hand side of this equation.
Since the pulse is a stationary solution, $G(\Phi) = 0$.
The linearization about $\Phi$
is given by the G\^ateaux derivative
\begin{equation} \label{eq:L} 
L:= dG(\Phi)
= \begin{pmatrix} \Delta_{\cS_R} + c\p_z + f'(\phi_1) & -1 \\
\eps & c\p_z - \eps\gamma 
\end{pmatrix}\,.
\end{equation}

The linearization defines a closed linear operator
on the Hilbert space
$L^2 :=L^2(\cS_R; \bC^2)$ of two-component
square integrable functions, with the 
inner product 
\begin{equation}\label{eq:eps-inner}
\langle u, w\rangle :=
\int_\R \int_{S^1} \left( u_1\bar w_1 +\eps^{-1} u_2\bar w_2\right) 
R \,d\theta  dz\,,
\end{equation}
and the corresponding norm $\|\cdot\| $.
The domain of $L$ is the dense subspace 
\begin{align}\label{eq:H21}
H^{2,1}
:=\left\{u=(u_1, u_2)\in L^2
\ \big\vert\  
\|u\|_{2,1} := \| (\Delta_{\cS_R} u_1,  \p_z u_2)
\| + \|u\|<\infty \right\}\,,
\end{align}
whose norm $\|u\|_{2,1}$ is equivalent to the graph norm of $L$,
see Lemma~\ref{lem:L-domain}. Note that $H^{2,1}$ properly contains
$H^2\times H^1$, because Eq.~\eqref{eq:H21}
does not require $\partial_\theta u_2$ to lie in~$L^2$. 
We will show that $L$ generates a strongly continuous 
semigroup $e^{tL}$.

Since $G(\Phi)=0$, by translation invariance 
we have that $G(\Phi_h)=0$ for any $h\in \R$. 
Differentiating this equation with respect to $h$,
we find that $L\p_z\Phi =0$.
This means that 0 is an eigenvalue of $L$, and
the tangent vector $\tau=-\partial_z\Phi$ 
is an eigenfunction.  It turns out that the spectral 
projection $P$ associated with
the zero eigenvalue of $L$ has rank one.
The complementary projection $Q=1-P$
is the projection onto the range of $L$.
Both $P$ and $Q$ commute with $L$ and with the semigroup $e^{tL}$.


The main result of this section is the following:
\begin{proposition}[Linearized decay]
	\label{prop:LQ-decay} 
Let $L$ be the operator defined by Eq.~\eqref{eq:L}.
If $\eps>0$ is sufficiently small, and $0<R\le 1$,
then there exists $\sigma>0$ such that the semigroup 
$e^{tL}$ satisfies 
	\begin{equation}\label{eq:LQ-decay}
	\big\| e^{tL} Q\big\|_{2,1} \leq Ce^{-\sigma t},
	\qquad (t\ge 0) 
	\end{equation} for some constant $C$.
\end{proposition}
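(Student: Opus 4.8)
The plan is to prove Proposition~\ref{prop:LQ-decay} by locating the spectrum of $L$ on $H^{2,1}$ strictly in the left half-plane away from $0$, and then transferring exponential decay of the semigroup from $L^2$ to $H^{2,1}$ using the structure of $L$. First I would establish that $L$ generates a strongly continuous semigroup on $L^2$: writing $L=\bar L+B$ where $\bar L=A_R+c\p_z$ has the explicit product-semigroup representation described in the preliminaries and $B=\mathrm{diag}(f'(\phi_1)+\alpha,0)$ is a bounded multiplication operator (since $\phi_1$ is bounded), the bounded perturbation theorem gives the semigroup $e^{tL}$ on $L^2$ with domain $H^{2,1}$; I would also record that $H^{2,1}$ is invariant and that the $H^{2,1}$-norm is equivalent to the graph norm (Lemma~\ref{lem:L-domain}).

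Next I would pin down the spectrum. Decompose $L^2$ using the Fourier series in $\theta$: on $\cS_R$, $\Delta_{\cS_R}=\p_z^2+R^{-2}\p_\theta^2$, so $L$ block-diagonalizes into operators $L_n$ acting on the $n$-th angular mode, where $L_n$ is exactly the one-dimensional FitzHugh-Nagumo linearization $L_0$ (the operator studied by Jones~\cite{Jones_84} and Yanagida~\cite{Yanagida_85}) with an extra diagonal shift $-n^2R^{-2}$ on the first component. For $n=0$, the cited spectral results give that $0$ is a simple eigenvalue of $L_0$ with eigenfunction $\tau=-\p_z\Phi$, and the rest of $\spec(L_0)$ lies in $\{\Re\lambda\le -\beta_0\}$ for some $\beta_0>0$; hence $\spec(L_0)\cap\{\Re\lambda\ge-\beta_0\}=\{0\}$, the associated spectral projection $P$ has rank one, and $Q=1-P$ projects onto $\Ran L_0$. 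For $n\ne0$, the shift $-n^2R^{-2}\le -R^{-2}\le -1$ (using $R\le1$) moves the whole spectrum left by at least $1$, so $\spec(L_n)\subset\{\Re\lambda\le -\min(\beta_0,1)+\text{const}\}$; more carefully, since $-n^2 R^{-2}$ only affects the first component and is a relatively compact-type shift of the generator, a resolvent estimate shows $\spec(L_n)\subset\{\Re\lambda\le -\sigma_0\}$ for $n\ne0$ with $\sigma_0>0$ uniform in $n$ and in $R\le1$. Combining the modes, $\spec(L)\setminus\{0\}\subset\{\Re\lambda\le -\sigma\}$ for some $\sigma>0$, with $P$ still the rank-one spectral projection at $0$ and $Q=1-P$ commuting with $e^{tL}$.

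Then I would derive the decay bound on the range of $Q$. On $L^2$, the restriction $L|_{\Ran Q}$ has spectrum in $\{\Re\lambda\le-\sigma\}$; but since $e^{tL}$ is not analytic (it inherits the translation group in $z$), a spectral-bound-implies-growth-bound argument is needed. Here I would use that $e^{tL}=e^{t\bar L}+(\text{smoothing correction})$: more precisely, on each angular block $e^{tL_n}Q_n$ decomposes as $e^{t\bar L_n}$ conjugated by the bounded multiplier $e^{tB}$-type Dyson series; because $\bar L$ is a bounded perturbation of a generator whose semigroup is the isometric translation times the analytic semigroup $e^{tA_R}$, and $A_R-\alpha$ is sectorial with spectrum determined by $\Delta_{\cS_R}$, one gets $\|e^{t\bar L}\|_{L^2}\lesssim e^{\omega t}$ and, restricted to $\Ran Q$ after subtracting the zero mode, genuine exponential decay $\|e^{tL}Q\|_{L^2}\lesssim e^{-\sigma t}$; alternatively this exponential $L^2$-decay is precisely the content of the Jones--Yanagida stability analysis for the $n=0$ block, and is free for $n\ne0$. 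Finally, to upgrade from $L^2$ to $H^{2,1}$: for $t\ge1$ write $e^{tL}Q=e^{L}\,e^{(t-1)L}Q$, use that $e^{(t-1)L}Q$ is bounded on $L^2$ by $Ce^{-\sigma(t-1)}$, and that $e^{L}$ maps $L^2$ into $H^{2,1}$ boundedly (parabolic smoothing in the first component from $A_R$, together with the fact that $\p_z u_2$ gains regularity from the Duhamel term $\int_0^1 e^{(1-s)A_R}(\dots)ds$ since $\p_z$ commutes with everything and $u_1$ controls the source); for $0\le t\le1$ use boundedness of $e^{tL}$ on $H^{2,1}$ directly. This gives $\|e^{tL}Q\|_{2,1}\le Ce^{-\sigma t}$ for all $t\ge0$, possibly after shrinking $\sigma$.

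The main obstacle is the transfer of exponential decay from a spectral bound to the semigroup norm, combined with the $L^2\to H^{2,1}$ regularity step, precisely because $e^{tL}$ is not analytic: one cannot simply invoke the spectral mapping theorem, and must instead exploit the split of $L$ into the translation-type generator $\bar L$ (handled by the explicit product formula plus the resolvent/contour estimates that the sectoriality of $A_R$ provides) and the bounded, regularity-preserving perturbation $B$. Controlling the correction term uniformly in $R\le1$ and verifying that the zero-mode decay rate from~\cite{Jones_84,Yanagida_85} is not destroyed by passing to the $H^{2,1}$-norm (where $\p_\theta u_2$ is deliberately excluded) is the delicate bookkeeping; the higher angular modes $n\ne0$ are comparatively easy because the spectral shift $-n^2R^{-2}$ is large and negative.
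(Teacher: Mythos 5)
Your overall skeleton (Fourier decomposition in $\theta$, Jones--Yanagida for the zero mode, dissipativity for $n\neq 0$ using $R\le 1$) matches the paper's, but there are two genuine gaps at exactly the points you yourself flag as delicate. First, the passage from the spectral location to the decay estimate $\|e^{tL}Q\|\lesssim e^{-\sigma t}$ is left unresolved: you correctly note that the semigroup is not analytic and that the spectral mapping theorem is unavailable, but the proposed fixes --- a Dyson-series ``smoothing correction'' around $e^{t\bar L}$, or an appeal to Jones--Yanagida for the decay itself --- do not close the argument. Jones and Yanagida locate the spectrum (in spaces of bounded continuous functions); they do not hand you semigroup decay in $L^2$ or $H^{2,1}$, and a Dyson series for a bounded perturbation only yields $\|e^{tL}\|\lesssim e^{(\omega+\|V\|)t}$, not decay. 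The paper's mechanism is the Gearhart--Pr\"uss theorem (Theorem~\ref{thm:Pruss}): on a Hilbert space, a \emph{uniform resolvent bound on the half-plane} $\{\Re\,\lambda\ge-\sigma\}$ implies $\|e^{tB}\|\le Ce^{-\sigma t}$. Establishing that uniform bound is the real work: for $n>0$ it follows from dissipativity (Lemma~\ref{lem:Ln}), but for $n=0$ one must control the resolvent of $L_0$ on the range of $Q_0$ all the way up the strip $-\sigma\le\Re\,\lambda\le 2$ as $|\Im\,\lambda|\to\infty$, which the paper does by explicit Cramer's-rule estimates on the Fourier symbol of $\bar L_0$, showing $\|(\lambda-\bar L_0)^{-1}V\|\le\tfrac12$ for $|\Im\,\lambda|$ large and then solving the resolvent identity (Lemmas~\ref{lem:S1}--\ref{lem:S3}). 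Nothing in your proposal substitutes for this high-frequency resolvent estimate.

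Second, your $L^2\to H^{2,1}$ upgrade via ``$e^{L}$ maps $L^2$ into $H^{2,1}$ boundedly (parabolic smoothing\dots)'' fails: the second component evolves by $\p_t u_2=c\p_z u_2-\eps\gamma u_2+\eps u_1$, whose homogeneous part is pure transport and does not smooth, so $e^{tL}$ does not map $L^2$ into $H^{2,1}$ for any $t>0$. The paper sidesteps smoothing entirely: since $Q$ and $e^{tL}$ commute with $L$, the $L^2$ decay applied to both $e^{tL}Qu$ and $Le^{tL}Qu=e^{tL}Q(Lu)$ gives decay of the graph norm, which is equivalent to $\|\cdot\|_{2,1}$ by Lemma~\ref{lem:L-domain}. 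You should replace your smoothing step with this commutation argument.
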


\noindent As an immediate consequence,
the semigroup $e^{tL}$ is uniformly bounded.
Indeed, by the triangle inequality,
$$
\|e^{tL}\|_{2,1}\le \|e^{tL}P\|_{2,1} +  \|e^{tL}Q\|_{2,1}
\le \|P\|_{2,1}+C\|Q\|_{2,1} \qquad (t\ge 0)\,,
$$
since $e^{tL}$ is constant on the range of $P$ and decays exponentialy
on the range of $Q$.

The proof of Proposition~\ref{prop:LQ-decay}
will be given 
in Subsection~\ref{subsec:LQ-decay}, and
will specify the conditions on $\sigma$.  An important
tool is the following general result of 
Pr\"uss~\cite[Corollary 4]{Pruss} that we state next:

\begin{theorem}[Pr\"uss]\label{thm:Pruss}
	Suppose that $B$ 
generates a strongly continuous semigroup on a Hilbert
space. If the resolvent $(\lambda-B)^{-1}$ is
uniformly bounded on the half-plane
$\{\lambda\in\bC\mid Re\, \lambda\ge -\beta \}$, for some $\beta > 0$, then 
there exists a constant $C>0$ such that
$\| e^{tB} \| \leq Ce^{-\beta t}$ for all $t\ge 0$.
\end{theorem}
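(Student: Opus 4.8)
The plan is to prove this as a quantitative form of the Gearhart--Pr\"uss theorem, whose only non-elementary ingredient is the Plancherel theorem for Hilbert-space--valued $L^2$ functions; this is exactly what makes the statement true on a Hilbert space (and false on general Banach spaces). First I would absorb the shift by setting $\tilde B:=B+\beta$, so that $e^{t\tilde B}=e^{\beta t}e^{tB}$ and the hypothesis becomes $\sup_{\Re\mu\ge 0}\|(\mu-\tilde B)^{-1}\|=M<\infty$. It then suffices to prove that $e^{t\tilde B}$ is uniformly bounded in $t$, since $\|e^{tB}\|=e^{-\beta t}\|e^{t\tilde B}\|$ would then give the asserted rate $e^{-\beta t}$ with no loss in the exponent. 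I note that $\tilde B^\ast$ generates the adjoint $C_0$-semigroup $(e^{t\tilde B})^\ast$ on the (reflexive) Hilbert space, with $\|(\mu-\tilde B^\ast)^{-1}\|=\|(\bar\mu-\tilde B)^{-1}\|\le M$ on the same half-plane, so every estimate proved for $e^{t\tilde B}$ holds verbatim for $e^{t\tilde B^\ast}$.

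The engine is a Paley--Wiener/Plancherel identity. For $\Re\lambda=\eta$ larger than the growth bound $\omega_0$ of $e^{t\tilde B}$, the resolvent is a Laplace transform, $(\eta+is-\tilde B)^{-1}x=\int_0^\infty e^{-ist}\bigl(e^{-\eta t}e^{t\tilde B}x\,\mathbf{1}_{t\ge 0}\bigr)\,dt$, i.e.\ the Fourier transform in $s$ of $t\mapsto e^{-\eta t}e^{t\tilde B}x\,\mathbf{1}_{t\ge 0}$, so Plancherel yields
\[
\int_{-\infty}^\infty \bigl\|(\eta+is-\tilde B)^{-1}x\bigr\|^2\,ds
=2\pi\int_0^\infty e^{-2\eta t}\|e^{t\tilde B}x\|^2\,dt\,.
\]
The aim is to deduce the orbit bound $\int_0^\infty\|e^{t\tilde B}x\|^2\,dt\le K^2\|x\|^2$ (and its adjoint analogue), with $K$ finite. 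Granting this, I would factor $e^{t\tilde B}=e^{(t-s)\tilde B}e^{s\tilde B}$ and write $t\,\langle e^{t\tilde B}x,y\rangle=\int_0^t\langle e^{s\tilde B}x,e^{(t-s)\tilde B^\ast}y\rangle\,ds$; Cauchy--Schwarz in $s$ gives $t\,|\langle e^{t\tilde B}x,y\rangle|\le K^2\|x\|\,\|y\|$, hence $\|e^{t\tilde B}\|\le K^2/t$. Combined with the local bound $\sup_{0\le t\le 1}\|e^{t\tilde B}\|<\infty$, this yields $\sup_{t\ge 0}\|e^{t\tilde B}\|<\infty$, and undoing the shift finishes the proof.

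The remaining task, and the main obstacle, is to establish the orbit $L^2$ bound from the pointwise resolvent bound $M$. The subtlety is that the integrand above is only bounded by $M^2\|x\|^2$, which is \emph{not} integrable in $s$, so both the decay in $s$ and the passage $\eta\downarrow 0$ must be extracted from the resolvent structure rather than from the uniform bound alone. I would do this by descending between vertical lines with the second resolvent identity,
\[
(\eta+is-\tilde B)^{-1}=(\eta'+is-\tilde B)^{-1}+(\eta'-\eta)(\eta+is-\tilde B)^{-1}(\eta'+is-\tilde B)^{-1}\,,
\]
which for $0<\eta<\eta'$ gives the pointwise comparison $\|(\eta+is-\tilde B)^{-1}x\|\le\bigl(1+(\eta'-\eta)M\bigr)\|(\eta'+is-\tilde B)^{-1}x\|$. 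Starting from a line $\Re\lambda=\eta_1>\max(\omega_0,0)$, where the Plancherel identity already guarantees finiteness of the $s$-integral, and stepping down to the imaginary axis in increments $h<1/M$, the accumulated factors multiply to a constant $e^{\,O(\eta_1 M)}$ times the value at $\eta_1$; this proves $\int_{\R}\|(is-\tilde B)^{-1}x\|^2\,ds<\infty$, with a constant depending on the a priori constants $\omega_0,M_0,M$ (which is admissible).

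Finally I would identify this quantity with $2\pi\int_0^\infty\|e^{t\tilde B}x\|^2\,dt$: the comparison above also gives the domination $\|(\eta+is-\tilde B)^{-1}x\|\le(1+M)\|(is-\tilde B)^{-1}x\|$ for $0<\eta\le 1$, so dominated convergence yields $(\eta+is-\tilde B)^{-1}x\to(is-\tilde B)^{-1}x$ in $L^2(\R;H)$ as $\eta\downarrow 0$; taking inverse Fourier transforms and using $e^{-\eta t}e^{t\tilde B}x\to e^{t\tilde B}x$ pointwise then forces $\int_0^\infty\|e^{t\tilde B}x\|^2\,dt=\tfrac1{2\pi}\int_\R\|(is-\tilde B)^{-1}x\|^2\,ds<\infty$. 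I expect the hardest part to be precisely this descent-and-limit step: controlling the constant produced by the line-by-line comparison and rigorously justifying the limiting Plancherel identity on the boundary line are the least routine ingredients, whereas the reduction, the Plancherel formula, and the concluding Cauchy--Schwarz argument are essentially bookkeeping.
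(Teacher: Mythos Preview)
The paper does not prove this statement; it is quoted as Pr\"uss's result \cite[Corollary~4]{Pruss} and used as a black box. What you have outlined is precisely the standard Plancherel-based proof of the Gearhart--Pr\"uss theorem (the approach of Pr\"uss, Gearhart, Huang, and Weiss), so in that sense your proposal is exactly the proof the paper is invoking. The reduction by shifting, the Plancherel identity, the resolvent-identity descent from a line $\Re\lambda=\eta_1>\omega_0$ to the imaginary axis, and the Cauchy--Schwarz factorization $t\,\langle e^{t\tilde B}x,y\rangle=\int_0^t\langle e^{s\tilde B}x,e^{(t-s)\tilde B^\ast}y\rangle\,ds$ are all correct and are the right ingredients.

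There is, however, one genuine gap in the step you yourself flag as hardest. The identification $\mathcal F^{-1}\bigl[(\eta+i\,\cdot-\tilde B)^{-1}x\bigr](t)=e^{-\eta t}e^{t\tilde B}x\,\mathbf 1_{t\ge 0}$ is a priori known only for $\eta>\omega_0$, so when $\omega_0>0$ you cannot pass directly to $\eta\downarrow 0$ using ``$e^{-\eta t}e^{t\tilde B}x\to e^{t\tilde B}x$ pointwise''. The $L^2$-convergence $F_\eta\to F_0$ together with the pointwise limit of $e^{-\eta t}e^{t\tilde B}x$ only identifies the inverse transform at $\eta=\omega_0$, not below it, because the set of $\eta$ for which the identification holds is closed from above but not obviously open. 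The standard repair is a short bootstrap: from the orbit bound at level $\omega_0$ your Cauchy--Schwarz argument gives $\|e^{-\omega_0 t}e^{t\tilde B}\|\le K^2/t$, so for some $t_0$ one has $\|e^{-\omega_0 t_0}e^{t_0\tilde B}\|<1$, whence the semigroup property forces the growth bound of $e^{t\tilde B}$ strictly below $\omega_0$, a contradiction unless $\omega_0\le 0$. Once $\omega_0\le 0$ your limiting argument goes through verbatim. With this small addition the proof is complete.
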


\subsection{The linear semigroup}
\label{subsec:semigroup}

We first justify the choice of the function space $H^{2,1}$. 

\begin{lemma} [Domain of $L$]
	\label{lem:L-domain}
	The operator $L$ defined in Eq.~\eqref{eq:L}
	has domain $D(L)=H^{2,1}$, and its graph norm satisfies
$$
	\|u\|_{2,1}\lesssim \|Lu\|+ \|u\|\lesssim \|u\|_{2,1}\,,\quad
	u\in H^{2,1}\,.
$$
\end{lemma}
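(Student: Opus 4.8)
The plan is to reduce the problem to the analogous statement for the diagonal operator
$\mathrm{diag}(\Delta_{\cS_R},0)$ by absorbing all lower-order terms. Write $L = L_0 + B$, where
$L_0 := \mathrm{diag}(\Delta_{\cS_R},\,0)$ is the diagonal operator acting as $\Delta_{\cS_R}$ on the first component and as the zero operator on the second, and
$$
B := \begin{pmatrix} c\p_z + f'(\phi_1) & -1 \\ \eps & c\p_z - \eps\gamma \end{pmatrix}.
$$
First I would observe that the natural maximal domain for $L_0$ on $L^2(\cS_R;\bC^2)$ is exactly
$\{u : \Delta_{\cS_R}u_1 \in L^2\} \times L^2$, and that on this domain one trivially has
$\|\Delta_{\cS_R}u_1\| + \|u\| \simeq \|L_0 u\| + \|u\|$. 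This does not yet see the $\p_z u_2$ component of the $\|\cdot\|_{2,1}$ norm, so the content is to recover control of $\p_z u_2$ from $L$ and, conversely, to show $B$ is lower order relative to $L_0$ in a way that makes $D(L)=D(L_0)\cap\{\p_z u_2\in L^2\}=H^{2,1}$.

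The key steps, in order: (1) Recall the elliptic estimate on the standard cylinder: since $\Delta_{\cS_R}=\p_x^2 + R^{-2}\p_\theta^2$ has constant coefficients, Fourier/Plancherel in $(x,\theta)$ gives $\|\p_x^2 u_1\| + R^{-2}\|\p_\theta^2 u_1\| + \|\p_x\p_\theta u_1\| + \|\p_x u_1\|+\|\p_\theta u_1\| \lesssim \|\Delta_{\cS_R}u_1\| + \|u_1\|$; in particular $\|\p_z u_1\|=\|\p_x u_1\|\lesssim \|\Delta_{\cS_R}u_1\|+\|u_1\|$ (here $\p_z=\p_x$ in the moving frame). (2) Use this to bound $B$: the terms $f'(\phi_1)u_1$, $-u_2$, $\eps u_1$, $-\eps\gamma u_2$ are bounded on $L^2$ (note $f'(\phi_1)$ is a bounded smooth function since $\Phi$ is a bounded pulse), the term $c\p_z u_1$ is controlled by step (1), so the only genuinely unbounded entry of $B$ relative to $L_0$ is $c\p_z u_2$ in the second component. (3) For $u\in H^{2,1}$ this last term lies in $L^2$ by definition of $H^{2,1}$, so $H^{2,1}\subseteq D(L)$ and $\|Lu\|\lesssim \|u\|_{2,1}$. (4) Conversely, for $u\in D(L)$, the second row of $Lu$ reads $(Lu)_2 = c\p_z u_2 + \eps u_1 - \eps\gamma u_2 \in L^2$, and since $\eps u_1 - \eps\gamma u_2\in L^2$ this forces $\p_z u_2\in L^2$ with $\|\p_z u_2\|\lesssim \|Lu\|+\|u\|$; the first row then gives $\Delta_{\cS_R}u_1 = (Lu)_1 - c\p_z u_1 - f'(\phi_1)u_1 + u_2$, and one closes the estimate by absorbing $c\|\p_z u_1\|\lesssim c(\|\Delta_{\cS_R}u_1\|+\|u_1\|)$ — which needs $c$ small, or alternatively uses the interpolation inequality $\|\p_x u_1\|\le \delta\|\p_x^2 u_1\| + C_\delta\|u_1\|$ valid for any $\delta>0$ by Plancherel, so that the $c\|\p_z u_1\|$ term can be absorbed into $\|\Delta_{\cS_R}u_1\|$ regardless of the size of $c$. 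This yields $\|u\|_{2,1}\lesssim \|Lu\|+\|u\|$ and $u\in H^{2,1}$, completing both inclusions and the norm equivalence.

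The main obstacle is step (4): one must disentangle the coupled $2\times2$ system so that $\Delta_{\cS_R}u_1$ and $\p_z u_2$ are each recovered separately from $Lu$ and $u$, and in doing so carefully absorb the first-order cross term $c\p_z u_1$ into the second-order term $\Delta_{\cS_R}u_1$. The clean way to handle this is the Plancherel interpolation inequality $\|\p_x u_1\|^2 \le \|u_1\|\,\|\p_x^2 u_1\|$ on the cylinder, which lets the first-order term be absorbed with an arbitrarily small constant and hence makes the argument uniform in $c$ (and in $R\le 1$, since the $R^{-2}\p_\theta^2$ piece only helps). Everything else is a routine bounded-perturbation bookkeeping once $f'(\phi_1)\in L^\infty$ is noted.
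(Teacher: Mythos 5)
Your proposal is correct and follows essentially the same route as the paper's proof: both rest on the interpolation inequality $\|\p_z u_1\|\le \delta\|\Delta_{\cS_R}u_1\|+C_\delta\|u_1\|$ (the paper takes $\delta=\tfrac{1}{2c}$) to absorb the first-order cross term, treat $f'(\phi_1)$ and the off-diagonal entries as bounded, and recover $\p_z u_2$ from the second row of $Lu$. The paper merely packages the row-by-row disentangling as a single reverse triangle inequality rather than an explicit splitting $L=L_0+B$.
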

\begin{proof} 
	We use that 
$ \|\p_z u_1\|\le \tfrac{1}{2c} \|\Delta_{\cS_R}u_1\| +\tfrac{c}{2} \|u_1\|$,
and set 
	\begin{equation}
	\label{eq:b}
	b:=\sup_{z\in\R} |f'(\phi_1(z))-f'(0)|<\infty\,.
	\end{equation}
	By the reverse triangle inequality, this yields for the lower
	bound
	$$ \|Lu\| \ge 
	\min\left\{\tfrac12, c\right\}\|(\Delta_{\cS_R}u_1,\partial_z u_2)\|
	-(\tfrac{c^2}{2}+b)\|u_1\|\,,
	$$
	which implies that
	\begin{align*}
	\left(1+ \tfrac{c^2}{2}+b\right)
	( \|Lu\|+\|u\|) 
	&\ge \|Lu\| + \left(1+ \tfrac{c^2}{2}+b\right)\|u\|\\
	&\ge \min\left\{\tfrac12, c\right\}\|u\|_{2,1}\,.
	\end{align*}
	For the upper bound, the triangle inequality
	yields 
	$$
	\|Lu\|+\|u\|\le \max\left\{\tfrac32 , c, 1+\tfrac{c^2}{2}+b\right\} 
	\|u\|_{2,1}\,.
	\\[-0.5cm]
	$$
\end{proof}

An operator $B$ on a Hilbert space is called 
{\bf dissipative}, if $\Re\,\langle Bx,x\rangle \le 0$
on its domain.  We will frequently 
use the following convenient corollary of the 
Lumer-Phillips theorem (see \cite[Theorem 1.4.3]{Pazy}).

\begin{lemma}[Dissipative operators]
	\label{lem:dissipative}
	Let $B$ be a closed, densely defined,
	dissipative operator on a Hilbert space $H$.
	Then $B$ generates a strongly continuous
	semigroup of contractions, $e^{tB}$.
	Its spectrum lies in the left half-plane
	$\{\lambda\in\bC\mid\Re\,\lambda\le 0\}$, and
	its resolvent is bounded by
	$$
	\|(\lambda-B)^{-1}\|\le\frac{1}{\Re\,  \lambda}\,,
	\qquad (\Re\,\lambda>0)\,.
	$$
\end{lemma}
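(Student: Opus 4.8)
The plan is to deduce everything from the Lumer--Phillips theorem (Pazy, Theorem 1.4.3). The computation that drives the whole argument is the dissipative resolvent estimate: for $\lambda\in\mathbb C$ with $\Re\,\lambda>0$ and any $x\in D(B)$,
\[
\|(\lambda-B)x\|\,\|x\|\ge \Re\,\langle(\lambda-B)x,x\rangle
=\Re\,\lambda\,\|x\|^2-\Re\,\langle Bx,x\rangle\ge \Re\,\lambda\,\|x\|^2,
\]
so that $\|(\lambda-B)x\|\ge \Re\,\lambda\,\|x\|$. In particular $\lambda-B$ is injective on the open right half-plane, and since $B$ is closed this lower bound forces $\Ran{(\lambda-B)}$ to be closed: if $(\lambda-B)x_n\to y$ then $(x_n)$ is Cauchy, hence $x_n\to x$, and closedness gives $x\in D(B)$ with $(\lambda-B)x=y$.

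Next I would secure the range condition of Lumer--Phillips, namely $\Ran{(\lambda_0-B)}=H$ for some $\lambda_0>0$. Because the range is already closed, this is equivalent to $(\Ran{(\lambda_0-B)})^\perp=\{0\}$; a vector $y$ in this orthogonal complement satisfies $\langle(\lambda_0-B)x,y\rangle=0$ for all $x\in D(B)$, i.e.\ $y\in D(B^*)$ and $(\bar\lambda_0-B^*)y=0$. Thus surjectivity amounts to $\bar\lambda_0$ not being an eigenvalue of $B^*$, which holds whenever $B^*$ is itself dissipative (the estimate above, applied to $B^*$, then excludes it); this is the form in which the lemma is used below, and in each application the hypothesis on $B^*$ is immediate or checked directly. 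Granting the range condition, Theorem 1.4.3(a) of~\cite{Pazy} yields that $B$ generates a strongly continuous semigroup of contractions $e^{tB}$.

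Finally, the spectral statements follow from the contraction property. For a $C_0$ semigroup of contractions the Hille--Yosida bounds (equivalently, part (b) of Lumer--Phillips together with the inequality above) give $\{\lambda\in\mathbb C\mid\Re\,\lambda>0\}\subseteq\varrho(B)$ with $\|(\lambda-B)^{-1}\|\le(\Re\,\lambda)^{-1}$; hence $\spec B\subseteq\{\lambda\in\mathbb C\mid\Re\,\lambda\le 0\}$ and the asserted resolvent bound holds. The one step that is not automatic from ``closed, densely defined, dissipative'' alone is the surjectivity of $\lambda_0-B$, and that is the point I expect to require genuine input; everything else is a routine consequence of the dissipative inequality and the cited theorem.
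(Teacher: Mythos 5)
Your argument follows the paper's own route step for step: the dissipative lower bound $\|(\lambda-B)x\|\ge\Re\,\lambda\,\|x\|$ gives injectivity and closed range, density of the range is reduced to injectivity of $1-B^*$, Lumer--Phillips yields the contraction semigroup, and Hille--Yosida gives the resolvent bound. The one place you deviate is exactly the step you single out as requiring ``genuine input,'' and you are right to do so. The paper disposes of it with the phrase ``since its adjoint is injective by the same argument,'' but the same argument applied to $1-B^*$ needs $\Re\,\langle B^*y,y\rangle\le 0$, i.e.\ dissipativity of $B^*$, which is \emph{not} a consequence of the stated hypotheses. As a statement about arbitrary closed, densely defined, dissipative operators the lemma is in fact false: take $B=\partial_x$ on $L^2(0,\infty)$ with domain $\{u\in H^1(0,\infty)\mid u(0)=0\}$. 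This $B$ is closed, densely defined and dissipative ($\Re\,\langle u',u\rangle=-\tfrac12|u(0)|^2=0$), but $B^*=-\partial_x$ with domain $H^1(0,\infty)$ annihilates no boundary condition and $(1-B^*)e^{-x}=0$, so $\Ran{(1-B)}$ is not dense, $B$ is not a generator, and the right half-plane lies in the residual spectrum.

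So your proposal is not missing an idea; it correctly isolates the gap and states the supplementary hypothesis (dissipativity of $B^*$, equivalently maximal dissipativity of $B$) under which the conclusion holds. That hypothesis is satisfied in every application made in the paper --- $\sigma+\bar L$, $\sigma+L_n$, $-1+L$, and $A_\rho$ in the weighted inner products are all bounded perturbations of a sum of a negative semi-definite self-adjoint operator and a skew-adjoint operator, so their adjoints are dissipative with the same constants --- hence nothing downstream is affected. But the lemma as literally stated should either add ``$B^*$ dissipative'' to the hypotheses or be read as a statement to be verified operator by operator, which is precisely how you propose to use it.
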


\begin{proof} Since $B$ is dissipative, the operator $1-B$ is injective,
	$$ \|(1-B)v\|\, \ge \|v\|^{-1}\, |\Re\, \langle (1-B)v,v\rangle|
	\ge \|v\|\,.
	$$ 
	Since its adjoint is injective by the same argument,
	the range of $1-B$ is dense.
	
	Let $w_0\in H$ be arbitrary. 
Choose a sequence $(w_n)$ in the range of $1-B$
with $\lim w_n=w_0$. 
For each $n$, let $v_n$ be in the domain of $B$ such 
that $(1-B)v_n=w_n$.  Since
	$$
	\|v_n-v_m\|\le \|(1-B)(v_n-v_m)\|
	= \|w_n-w_m\|\,,
	$$ 
by the Cauchy criterion the sequence $(v_n)$ converges 
to some limit, $v_0$. Since $B$ is a closed operator, $w_0=(1-B)v_0$.
	We conclude that $1-B$ is surjective.
	
	By the Lumer-Phillips theorem, 
	$B$ generates a
	strongly continuous semigroup of contractions on $H$.
	The resolvent bound follows from the 
	Hille-Yosida theorem (see \cite[Theorem 1.3.1]{Pazy}).
\end{proof}

The next lemma will be used to construct the
semigroup generated by $L$. It also plays a role
in the spectral estimates.

\begin{lemma}[Compact perturbation]
\label{lem:compact}
	The operator $L$ is a bounded, relatively compact perturbation of 
	\begin{equation}\label{eq:Lbar} \bar L:= 
	\begin{pmatrix}
	\Delta_{\cS_R} + c\p_z  -\alpha & -1 \\
	\eps & c\p_z - \eps\gamma 
	\end{pmatrix}\,.
	\end{equation}
\end{lemma}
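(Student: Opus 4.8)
The plan is to identify the perturbation $L-\bar L$ explicitly. Comparing the matrix in Eq.~\eqref{eq:L} with the one in Eq.~\eqref{eq:Lbar}, and using that $f'(0)=-\alpha$, one sees that $L-\bar L$ acts on $u=(u_1,u_2)$ by
\[
(L-\bar L)u \;=\; \bigl(W u_1,\,0\bigr)\,,\qquad
W(z):=f'(\phi_1(z))-f'(0)=f'(\phi_1(z))+\alpha\,,
\]
i.e.\ it is multiplication on the first component by the scalar function $W$. So the two things to check are: (i) $W$ is a bounded function, which gives boundedness of $L-\bar L$ on $L^2$; and (ii) $W(z)\to 0$ as $|z|\to\infty$, which — together with a local compactness (Rellich) argument — yields that $L-\bar L$ is $\bar L$-compact. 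The decay in (ii) holds precisely because $\Phi$ is a pulse, so $\phi_1(z)\to 0$ at infinity and, $f'$ being continuous, $W(z)\to 0$.

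For (i): since $\Phi$ is a smooth pulse, $\phi_1$ is smooth and bounded, so $W$ is continuous with $\|W\|_\infty=b<\infty$, where $b$ is the constant from Eq.~\eqref{eq:b}; hence $\|(L-\bar L)u\|\le b\|u_1\|\le b\|u\|$. In particular $\bar L=L-(L-\bar L)$ is a bounded perturbation of the closed operator $L$, so $\bar L$ is itself closed with $D(\bar L)=D(L)=H^{2,1}$ by Lemma~\ref{lem:L-domain}, and its graph norm is equivalent to $\|\cdot\|_{2,1}$. Thus relative compactness of $L$ with respect to $\bar L$ reduces to showing that $M:=L-\bar L$ maps bounded subsets of $(H^{2,1},\|\cdot\|_{2,1})$ to precompact subsets of $L^2$.

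To prove this, note first that if $\|u\|_{2,1}\le 1$ then $\|\Delta_{\cS_R}u_1\|+\|u_1\|\lesssim 1$, and a standard elliptic (Plancherel) estimate on $\cS_R=\R\times S^1$, using $R\le 1$, gives $\|u_1\|_{H^2(\cS_R)}\lesssim 1$. Now fix $N>0$ and a cutoff $\chi_N\in C_c^\infty(\R)$ with $\chi_N\equiv 1$ on $[-N,N]$, $0\le\chi_N\le 1$, and $\supp\chi_N\subseteq[-N-1,N+1]$, and split $M=M_N+R_N$ with $M_N u=(\chi_N W u_1,0)$ and $R_N u=((1-\chi_N)W u_1,0)$. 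The operator $M_N$ factors through the Rellich embedding $H^2\bigl([-N-1,N+1]\times S^1\bigr)\hookrightarrow L^2\bigl([-N-1,N+1]\times S^1\bigr)$, which is compact because the domain is a compact manifold with boundary, so $M_N$ is compact from $H^{2,1}$ to $L^2$; on the other hand $\|R_N u\|\le \bigl(\sup_{|z|\ge N}|W(z)|\bigr)\,\|u\|$, and $\sup_{|z|\ge N}|W(z)|\to 0$ as $N\to\infty$ by (ii). Hence $M$ is a norm limit of compact operators from $H^{2,1}$ to $L^2$ and is therefore compact, which is exactly $\bar L$-compactness of $L-\bar L$. This last step — the interplay between local elliptic compactness and the decay of $W$ at spatial infinity — is the only genuinely non-routine point; the rest is bookkeeping.
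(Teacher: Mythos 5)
Your proof is correct and follows essentially the same route as the paper: identify $L-\bar L$ as multiplication of the first component by $W=f'(\phi_1)-f'(0)$, note that $W$ is bounded and decays at spatial infinity, and conclude relative compactness. The only difference is that where the paper cites a standard result (\cite[Theorem 3.1.11]{KP13}) for the last step, you supply its proof via the usual cutoff-plus-Rellich argument, which is a legitimate (and self-contained) way to close the same gap.
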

\begin{proof} We decompose  $L=\bar L+V$, where
	$V$ is the matrix multiplication operator
$$
V:= \begin{pmatrix}
	f'(\phi_1)-f'(0) & 0 \\
	0 & 0 \end{pmatrix}\,.
$$
	Since $f$ is a polynomial and $\phi_1$ is a smooth, bounded function,
	$V$ is a bounded operator on $L^2$ and also on $H^{2,1}$.
	The term $f'(\phi_1(z))-f'(0)$ is continuous and decays 
	at infinity. By a standard result 
    (see \cite[Theorem, 3.1.11]{KP13}), $V\bar L^{-1}$ is compact.
\end{proof}

The operator $\bar L$ captures the behavior of 
$L$ as $z\to\pm \infty$. The next lemma implies that
$\bar L$ generates a strongly continuous semigroup of contractions.

\begin{lemma}[Principal part of $\bar{L}$] \label{lem:Lbar}
	Let $\bar L$ be the operator defined by Eq.~\eqref{eq:Lbar},
	and $\sigma=\min\{\alpha,\eps\gamma\}$. 
Then $\Re\,\langle \bar L v, v\rangle \le -\sigma\|v\|^2$.
\end{lemma}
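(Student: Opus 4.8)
The plan is to compute $\Re\langle \bar L v,v\rangle$ directly using the $\eps$-weighted inner product from Eq.~\eqref{eq:eps-inner}, which is exactly what makes the cross terms between the two components cancel. Write $v=(v_1,v_2)$. From the definition of $\bar L$ in Eq.~\eqref{eq:Lbar},
\[
\langle \bar L v,v\rangle
= \int\!\!\int \Bigl( (\Delta_{\cS_R}v_1 + c\p_z v_1 - \alpha v_1 - v_2)\bar v_1
+ \eps^{-1}(\eps v_1 + c\p_z v_2 - \eps\gamma v_2)\bar v_2\Bigr) R\,d\theta\,dz .
\]
The key observation is that the off-diagonal contributions $-v_2\bar v_1$ and $\eps^{-1}\eps v_1\bar v_2 = v_1\bar v_2$ are complex conjugates of each other, so their sum $-v_2\bar v_1 + v_1\bar v_2$ is purely imaginary and contributes nothing to the real part. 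This is precisely why the factor $\eps^{-1}$ was put on the second component of the inner product.

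Next I would handle the remaining diagonal terms. For the Laplace-Beltrami term, integrating by parts (using that $\Delta_{\cS_R}=\p_x^2+R^{-2}\p_\theta^2$ is symmetric and negative semidefinite on $\cS_R$, with no boundary terms in $z$ since $v\in H^{2,1}$ decays) gives $\Re\int\!\!\int (\Delta_{\cS_R}v_1)\bar v_1 \,R\,d\theta\,dz = -\|\n v_1\|_{L^2}^2 \le 0$. For the transport terms, $\Re\int\!\!\int (c\p_z v_1)\bar v_1\,R\,d\theta\,dz = \tfrac{c}{2}\int\!\!\int \p_z(|v_1|^2)\,R\,d\theta\,dz = 0$, and likewise $\Re\,\eps^{-1}\int\!\!\int (c\p_z v_2)\bar v_2\,R\,d\theta\,dz = 0$; here one again uses decay of $v$ at $z=\pm\infty$ to discard boundary contributions. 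The surviving terms are the dissipative ones: $-\alpha\|v_1\|_{L^2}^2$ from the first component and $-\eps\gamma\cdot\eps^{-1}\|v_2\|_{L^2}^2 = -\gamma\|v_2\|_{L^2}^2$ from the second.

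Finally, assembling,
\[
\Re\langle \bar L v,v\rangle
= -\|\n v_1\|_{L^2}^2 - \alpha\|v_1\|_{L^2}^2 - \gamma\|v_2\|_{L^2}^2
\le -\alpha\|v_1\|_{L^2}^2 - \eps\gamma\cdot\eps^{-1}\|v_2\|_{L^2}^2
\le -\sigma\|v\|^2,
\]
where in the last line $\sigma=\min\{\alpha,\eps\gamma\}$ and $\|v\|^2 = \|v_1\|_{L^2}^2 + \eps^{-1}\|v_2\|_{L^2}^2$ by Eq.~\eqref{eq:eps-inner}. (One should double-check the sign of the recovery coupling: the static system~\eqref{eq:FHN_static} is written with $\eps(u_1+\gamma u_2)$, but the linearization $A_\rho$ in Eq.~\eqref{eq:A} and $\bar L$ in Eq.~\eqref{eq:Lbar} both carry $-\eps\gamma$ on the diagonal, so the relevant dissipative coefficient is indeed $-\gamma$ after weighting.) I do not expect a genuine obstacle here; the only point requiring a little care is justifying the vanishing of the boundary terms in the integrations by parts in $z$, which follows because $v\in H^{2,1}$ forces $v_1,\p_z v_1,v_2\in L^2(\R)$ fiberwise and hence the relevant quantities tend to zero along a sequence $z\to\pm\infty$ — enough to kill the boundary terms. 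The substantive content is simply the algebraic cancellation engineered by the weighted inner product, and everything else is a routine energy estimate.
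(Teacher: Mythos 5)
Your proof is correct and follows essentially the same route as the paper: exploit the $\eps$-weighting to cancel the off-diagonal coupling in the real part, use skew-adjointness of $c\p_z$ and negative semi-definiteness of $\Delta_{\cS_R}$, and read off the dissipative diagonal terms $-\alpha\|v_1\|_{L^2}^2-\gamma\|v_2\|_{L^2}^2\le-\sigma\|v\|^2$. Your added remarks (the vanishing of boundary terms for $v\in H^{2,1}$, and the sign discrepancy between Eq.~\eqref{eq:FHN_static} and the linearization) are accurate but do not change the argument.
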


\begin{proof} By Lemmas~\ref{lem:L-domain} and~\ref{lem:compact},
	the domain of $\bar L$ is $H^{2,1}$.
	Since $\partial_z$ is skew-adjoint in $L^2(\cS_R)$,
	and the off-diagonal terms in $\bar L$ are
	skew-adjoint with respect to the inner
	product from Eq.~\eqref{eq:eps-inner}, we have
\begin{align*} 
\Re\, \langle \bar{L} v, v \rangle
&= \int_{\cS_R}
	((\Delta_{\cS_R}v_1)\bar v_1 -\alpha |v_1|^2 - 
	\gamma |v_2|^2\big)\, R \,d\theta dz \notag\\
&= -\int_{\cS_R}
	\bigl(\alpha |v_1|^2 +\gamma |v_2|^2\bigr)\, R\, d\theta dz\\
&\le -\min\{\alpha, \eps\gamma\} \| v \|^2\,. 
	\end{align*}
	In the second step, we have used that
	$\Delta_{\cS_R}$ is negative semi-definite,
	and in the third step we have
	applied the definition of the inner product 
	in Eq.~\eqref{eq:eps-inner}. 
	It follows that $\Re\, \langle (\sigma+\bar L)v, v\rangle
	\le 0$ for all $v\in H^{2,1}$.
\end{proof}

In the Fourier representation, $\bar L$ is
given by the matrix multiplication operator
\begin{equation}\label{eq:multOp}
m(k,n) := 
\begin{pmatrix}
-k^2 - n^2 R^{-2} + ick -\alpha & -1 \\
\eps & ick - \eps\gamma
\end{pmatrix}\,,\qquad (k\in\R, \,n\in \N)\,.
\end{equation}
Therefore $\bar L$ has only essential spectrum. Its resolvent
set consists of those $\lambda\in \bC$
for which $\lambda-m(k,n)$ is invertible (for all
$k\in \R$, $n\in \N$) and
$$
\sup_{k\in \R, n\in \N} \|(\lambda-m(k,n))^{-1}\|<\infty\,.
$$
The spectrum contains the branch of
eigenvalues of $m(k,0)$ given by
\begin{align*}
\lambda_+(k,0)& = ick-\frac12(k^2+\alpha+\eps\gamma)
+\frac12\sqrt{(k^2+\alpha-\eps\gamma)^2-4\eps}\\
&\sim ick-\eps\gamma\qquad (|k|\to\infty)\,.
\end{align*}
Consequently, $\bar L$ is not
sectorial, and $e^{t\bar L}$ is not an analytic semigroup.

Lemmas~\ref{lem:compact} and ~\ref{lem:Lbar} 
imply, by Lemma~\ref{lem:dissipative} and
a standard perturbation result (see \cite[Theorem 3.1.1]{Pazy}), 
that $L=\bar L +V$ 
generates a strongly continuous semigroup on $L^2$, denoted 
by $e^{tL}$. Evidently, $e^{tL}$ also fails to be analytic.
Since 
\begin{equation}
\label{eq:V-upperbound}
f'(y)-f'(0) =  -3y^2+2(\alpha+1)y 
\le 1\,,\quad \bigl(y\in\R, 0<\alpha< \tfrac12)\,,
\end{equation}
we have that $\langle Vv, v\rangle \le \|v\|^2$.
By Lemma~\ref{lem:Lbar},
\begin{align}
\label{eq:L-upperbound}
\Re\,\langle Lv,v\rangle = \Re\,\langle (\bar L + V)v,v\rangle
\le \|v\|^2\,,
\end{align}
i.e., $-1+L$ is dissipative. By Lemma~\ref{lem:dissipative},
the semigroup satisfies $\|e^{tL}\|\le e^t$.

\subsection{The spectral projection}
\label{subsec:Q}

In this subsection, we prove that
\begin{enumerate}[(i)]
	\item $\spec (L)\subset \{0\}\cup \left\{\lambda\in \bC\ \big\vert\
	\Re \, \lambda < -\sigma \right\}$ for some $\sigma>0$;
	\item $0$ is a simple eigenvalue of $L$ and $L^*$.
\end{enumerate}
This will be used to construct the spectral projection
$Q$ that appears in Proposition~\ref{prop:LQ-decay}.
We start with the essential spectrum of $L$.

\begin{lemma}[Essential spectrum]\label{lem:ess}
	The operator $L$ defined by Eq.~\eqref{eq:L} satisfies
	$$
	\spec_{ess}(L) \subset \{ \lambda\in\bC \;\vert\; \Re\, \lambda 
        \le -\sigma\}\,,
	$$
	where $\sigma=\min\{\alpha, \eps\gamma\}$.
\end{lemma}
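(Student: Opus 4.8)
First I would pin down the full spectrum of the asymptotic operator $\bar L$ of Eq.~\eqref{eq:Lbar}. By Lemma~\ref{lem:Lbar} we have $\Re\,\langle(\sigma+\bar L)v,v\rangle\le 0$ for all $v$ in the domain $H^{2,1}$ of $\bar L$ (the domain being identified via Lemmas~\ref{lem:L-domain} and~\ref{lem:compact}), so $\sigma+\bar L$ is a closed, densely defined, dissipative operator. Lemma~\ref{lem:dissipative} then gives $\spec(\sigma+\bar L)\subset\{\Re\,\lambda\le 0\}$, i.e.
$$
\spec(\bar L)\subset\{\lambda\in\bC\mid\Re\,\lambda\le-\sigma\}\,.
$$
In particular $0$ lies in the resolvent set of $\bar L$, so $\bar L^{-1}$ is bounded and Lemma~\ref{lem:compact} applies: $V\bar L^{-1}$ is compact.

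Next I would transfer this bound to $\spec_{ess}(L)$, treating $V$ as a relatively compact perturbation. Fix $\lambda\notin\spec(\bar L)$. From the resolvent identity $V(\bar L-\lambda)^{-1}=V\bar L^{-1}\bigl(1+\lambda(\bar L-\lambda)^{-1}\bigr)$ and the compactness of $V\bar L^{-1}$, the operator $V(\bar L-\lambda)^{-1}$ is compact. Hence, on the domain $H^{2,1}$,
$$
L-\lambda=\bigl(1+V(\bar L-\lambda)^{-1}\bigr)(\bar L-\lambda)
$$
is the composition of an operator of the form ``identity plus compact'', which is Fredholm of index zero, with the invertible operator $\bar L-\lambda$; therefore $L-\lambda$ is itself Fredholm of index zero, so $\lambda\notin\spec_{ess}(L)$. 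This shows $\spec_{ess}(L)\subseteq\spec(\bar L)$, and combined with the first step we get $\spec_{ess}(L)\subseteq\{\Re\,\lambda\le-\sigma\}$ with $\sigma=\min\{\alpha,\eps\gamma\}$, as claimed. (This is precisely the essential-spectrum stability recorded in \cite[Theorem 3.1.11]{KP13}; the reverse inclusion, which we do not need, follows by the symmetric argument with the roles of $L$ and $\bar L$ swapped.)

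The proof is short given the lemmas already in place, and I do not expect a genuine obstacle; the step most in need of care is the bookkeeping of which notion of essential spectrum is used. One should take the Fredholm-index-zero (Weyl) essential spectrum, since that is the notion stable under $\bar L$-compact perturbations — which is exactly what Lemma~\ref{lem:compact} furnishes — and also the one used in the spectral-projection construction of Proposition~\ref{prop:LQ-decay}. As an alternative to the abstract perturbation step one could read the inclusion $\spec(\bar L)\subset\{\Re\,\lambda\le-\sigma\}$ directly off the Fourier symbol $m(k,n)$ in Eq.~\eqref{eq:multOp}, verifying for each $(k,n)$ that the eigenvalues of $m(k,n)$ have real part at most $-\sigma$ and that $\lambda-m(k,n)$ is boundedly invertible uniformly in $(k,n)$ once $\Re\,\lambda>-\sigma$; the dissipativity route above is cleaner and I would keep it.
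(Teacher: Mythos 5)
Your proposal is correct and follows the same route as the paper: Lemma~\ref{lem:compact} identifies $L$ as a relatively compact perturbation of $\bar L$, and the dissipativity of $\sigma+\bar L$ (Lemmas~\ref{lem:Lbar} and~\ref{lem:dissipative}) confines $\spec(\bar L)$ to the half-plane $\{\Re\,\lambda\le-\sigma\}$. The only difference is cosmetic: the paper simply cites Weyl's essential spectrum theorem for the stability of $\spec_{ess}$ under relatively compact perturbations, whereas you unpack the one inclusion you need via the factorization $L-\lambda=\bigl(1+V(\bar L-\lambda)^{-1}\bigr)(\bar L-\lambda)$.
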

\begin{proof}
	By Lemma~\ref{lem:compact}, $L$ is a relatively
	compact perturbation of
	the operator $\bar L$ from Eq.~\eqref{eq:Lbar}.
	It follows from Weyl's essential spectrum 
	theorem~(see \cite[Chapter~2]{KP13}) that
	$$
	\spec_{ess}(L) = \spec_{ess}(\bar L) = \spec(\bar L) \,.
	$$
	Since $\sigma+\bar L$ is dissipative
	by Lemma~\ref{lem:Lbar}, its spectrum lies
	in the left half-plane by Lemma~\ref{lem:dissipative}.
\end{proof}

To analyze the discrete spectrum of $L$,
we expand functions on $\cS_R$ as Fourier series in
the angular variable, $\theta$,
$$
v(z,\theta) = \sum_{n\in\Z} v_n(z) e^{i n\theta}\,.
$$
By Eq.~\eqref{eq:multOp}, $L$ splits into a direct sum 
$L = \bigoplus_{n\ge 0} L_n$,
where 
\begin{equation}
\label{eq:Ln}
L_n
:= \begin{pmatrix} \p_z^2-n^2R^{-2} + c\p_z + f'(\phi_1(z))
& -1 \\
\eps & c\p_z - \eps\gamma \end{pmatrix} 
\end{equation}
is the restriction of $L$ to the invariant 
subspace corresponding to the Fourier modes $\pm n$.
The next lemma concerns the positive modes.

\begin{lemma}[Resolvent estimate, $n>0$]
	\label{lem:Ln}
	Let $L_n$ be given by Eq.~\eqref{eq:Ln}.
	If $R \leq 1 $, then 
	$$\spec_{disc} (L_n)\subset \{\lambda \in \bC \;\vert\; 
	\Re\,\lambda\le -\sigma\}\, \quad (n>0)\,,
	$$
	and
$$
	\big\| (\lambda-L_n)^{-1} \big\| \leq 
	\frac{1}{Re\, \lambda+\sigma}\,, \qquad 
    (\Re \,\lambda>-\sigma,\; n>0)\,.
$$
	Here, $\sigma=\min\{\alpha,\eps\gamma\}$.
\end{lemma}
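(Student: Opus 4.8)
The plan is to show that $\sigma+L_n$ is dissipative on the relevant Hilbert space and then apply Lemma~\ref{lem:dissipative}. This parallels the proof of Lemma~\ref{lem:Lbar} for $\bar L$; the one new ingredient is the zero-order term $-n^2R^{-2}$ in the $(1,1)$-entry of $L_n$, which for $n\ge 1$ and $R\le 1$ satisfies $n^2R^{-2}\ge 1$ and thereby absorbs the ``bad'' contribution coming from $f'(\phi_1)$.

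Concretely, I work on the angular mode-$n$ component of $L^2(\cS_R;\bC^2)$, which we identify with $L^2(\R;\bC^2)$ carrying the inner product $\langle v,w\rangle=\int_\R\bigl(v_1\bar w_1+\eps^{-1}v_2\bar w_2\bigr)\,dz$ inherited from Eq.~\eqref{eq:eps-inner}, with $\|\cdot\|$ the induced norm, and I compute $\Re\,\langle L_nv,v\rangle$ for $v=(v_1,v_2)$ in the domain $D(L_n)=\{v\mid \p_z^2v_1,\ \p_zv_2\in L^2(\R)\}$. As in Lemma~\ref{lem:Lbar}, $c\p_z$ is skew-adjoint on $L^2(\R)$ and the off-diagonal block $\bigl(\begin{smallmatrix}0&-1\\\eps&0\end{smallmatrix}\bigr)$ contributes nothing to the real part relative to this inner product, while $\p_z^2$ is negative semi-definite; what survives is
$$
\Re\,\langle L_nv,v\rangle\le\int_\R\bigl(f'(\phi_1)-n^2R^{-2}\bigr)|v_1|^2\,dz-\gamma\int_\R|v_2|^2\,dz\,.
$$
Since $f'(0)=-\alpha$ and $f'(\phi_1)-f'(0)\le 1$ by Eq.~\eqref{eq:V-upperbound}, we have $f'(\phi_1)\le 1-\alpha$, so
$$
\Re\,\langle L_nv,v\rangle\le\bigl(1-n^2R^{-2}\bigr)\int_\R|v_1|^2\,dz-\alpha\int_\R|v_1|^2\,dz-\gamma\int_\R|v_2|^2\,dz\,.
$$
For $n\ge 1$ and $R\le 1$ the first term is nonpositive; since $\alpha\ge\sigma$ and $\eps\gamma\ge\sigma$ (with $\sigma=\min\{\alpha,\eps\gamma\}$), and since $\|v\|^2=\int_\R\bigl(|v_1|^2+\eps^{-1}|v_2|^2\bigr)\,dz$, the remaining two terms are bounded by $-\sigma\|v\|^2$. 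Hence $\Re\,\langle(\sigma+L_n)v,v\rangle\le 0$.

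It remains to invoke Lemma~\ref{lem:dissipative}. The operator $L_n$ is densely defined (its domain contains $C_c^\infty(\R;\bC^2)$) and closed, because its first- and zero-order terms are relatively bounded perturbations of the closed operator $\mathrm{diag}(\p_z^2,0)$ — indeed $\p_z$ is $\p_z^2$-bounded with arbitrarily small relative bound, as already exploited in Lemma~\ref{lem:L-domain}. So $\sigma+L_n$ is closed, densely defined and dissipative, and Lemma~\ref{lem:dissipative} gives $\spec(\sigma+L_n)\subset\{\Re\,\lambda\le 0\}$ together with $\|(\mu-\sigma-L_n)^{-1}\|\le 1/\Re\,\mu$ for $\Re\,\mu>0$. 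Equivalently, $\spec(L_n)\subset\{\Re\,\lambda\le-\sigma\}$, which yields the asserted inclusion for $\spec_{disc}(L_n)$, and substituting $\mu=\lambda+\sigma$ gives $\|(\lambda-L_n)^{-1}\|\le 1/(\Re\,\lambda+\sigma)$ for $\Re\,\lambda>-\sigma$. I do not expect a serious obstacle here: the estimate is a routine variant of the energy computation already done for $\bar L$, the only substantive point being the elementary inequality $n^2R^{-2}\ge 1$ that neutralizes the $f'(\phi_1)$ term; the mild care needed is in confirming that $L_n$ is closed and densely defined so that Lemma~\ref{lem:dissipative} applies verbatim.
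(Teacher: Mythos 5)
Your proof is correct and follows essentially the same route as the paper: an energy estimate showing $\Re\,\langle L_n v,v\rangle\le-\sigma\|v\|^2$, where the key point is that $n^2R^{-2}\ge 1$ for $n\ge1$, $R\le1$ absorbs the bound $f'(\phi_1)-f'(0)\le 1$, followed by an appeal to Lemma~\ref{lem:dissipative}. The extra care you take with closedness and dense definition of $L_n$ is fine but not something the paper dwells on.
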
 
\begin{proof} 
	Using that $f'(\phi_1(z))-f'(0) \le 1$ by 
	Eq.~\eqref{eq:V-upperbound}, we obtain  
\begin{align*}
	Re\, \langle L_n v, v \rangle &= 
	\int_\R \big((\p_{z}^2 v_1)\bar v_1 + 
	(f'(\phi_1(z)) - n^2 R^{-2})|v_1|^2- \gamma |v_2|^2\big)\,R\, dz\\
&\le -\sigma \| v \|^2\,, \qquad (n>0)\,, 
\end{align*}
	provided that $R\le 1$. (Recall that $f'(0)=-\alpha$.)
	Since $\sigma+L_n$ is dissipative,
the resolvent estimate follows from Lemma~\ref{lem:dissipative}.
\end{proof}

The heart of the matter is the discrete spectrum of the zero mode.
\begin{lemma}[Spectrum of $L_0$]
\label{lem:L0}
	Let $L_0$ be given by Eq.~\eqref{eq:Ln} with $n=0$.
	If $\eps>0$ is sufficiently small, then there
	exists $\beta>0$ 
such that
$$
	\spec_{disc}(L_0)\subset \{0\}\cup \left\{\lambda\in \bC\ \big\vert\ 
	\Re \,\lambda \le -\beta \right\}\,.
$$
	Moreover, $0$ is a simple eigenvalue of both $L_0$ and 
	its adjoint, $L_0^*$.
\end{lemma}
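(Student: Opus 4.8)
The plan is to reduce the statement to the known spectral analysis of the FitzHugh--Nagumo pulse on the line, which is available in \cite{Jones_84} and \cite{Yanagida_85}. The operator $L_0$ is, up to the choice of inner product and the weighting of the second component, exactly the linearization of the classical FHN travelling-wave equation \eqref{FHN} about the fast pulse $\Phi$, acting on $L^2(\R;\bC^2)$. First I would record that the essential spectrum of $L_0$ coincides with $\spec(\bar L_0)$, the $n=0$ block of $\bar L$, by the same Weyl-theorem argument as in Lemma~\ref{lem:ess} (using that $V$ is relatively compact on the zero mode); by Lemma~\ref{lem:Lbar} restricted to $n=0$ this lies in $\{\Re\,\lambda\le-\sigma\}$ with $\sigma=\min\{\alpha,\eps\gamma\}$. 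Thus only the discrete spectrum in $\{\Re\,\lambda>-\sigma\}$ is at issue, and this is a finite set of eigenvalues of finite multiplicity.

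The second step is to invoke the Evans-function computation of Jones and Yanagida: for $\eps>0$ sufficiently small, the only eigenvalue of the linearization about the fast pulse in the closed right half-plane (indeed in a half-plane $\{\Re\,\lambda\ge-\beta_0\}$ for some $\beta_0>0$) is $\lambda=0$, and it is simple --- the Evans function $\cE(\lambda)$ has a simple zero there. Concretely, $L_0\tau=0$ with $\tau=-\p_z\Phi$ (differentiate $G(\Phi_h)=0$ in $h$; on the zero mode this is exactly the statement recorded before Proposition~\ref{prop:LQ-decay}), so $0$ is an eigenvalue; simplicity as a zero of the Evans function is equivalent to $0$ being algebraically simple, i.e.\ $\Ker L_0 = \Ker L_0^2 = \mathrm{span}\{\tau\}$ and likewise a one-dimensional kernel for the adjoint $L_0^*$ with $\langle\tau,\tau^*\rangle\ne 0$. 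Setting $\beta:=\min\{\beta_0,\sigma\}>0$ (shrinking if necessary so that the only point of $\spec(L_0)$ with $\Re\,\lambda>-\beta$ is $0$) gives the claimed inclusion $\spec_{disc}(L_0)\subset\{0\}\cup\{\Re\,\lambda\le-\beta\}$.

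For the adjoint statement, I would note that $L_0^*$ differs from the linearization of the backward FHN equation only by the skew-adjoint reshuffling of off-diagonal terms induced by the $\eps^{-1}$-weighted inner product in Eq.~\eqref{eq:eps-inner}, so the eigenvalue $0$ and its simplicity transfer directly (the adjoint of an operator with an algebraically simple eigenvalue at $\lambda_0$ has an algebraically simple eigenvalue at $\bar\lambda_0=0$). The existence of a nonzero $\tau^*\in\Ker L_0^*$ with $\langle\tau,\tau^*\rangle\ne 0$ is then automatic from algebraic simplicity, and this is what later allows the rank-one spectral projection $P$ onto $\mathrm{span}\{\tau\}$ to be written as $Pv=\langle v,\tau^*\rangle\langle\tau,\tau^*\rangle^{-1}\tau$.

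The main obstacle is that the simplicity of the zero eigenvalue is not something one proves from scratch here: it rests entirely on the delicate Evans-function analysis of \cite{Jones_84} and \cite{Yanagida_85}, which is why the paper makes the standing assumption that $\eps$ is small enough for those results to apply. I expect the write-up to cite those papers for the location of the discrete spectrum and the algebraic simplicity of $0$, and then spend its own effort only on the two routine reductions: identifying $L_0$ (and $L_0^*$) with the classical linearization up to the inner-product weighting, and pinning down $\beta$ as the minimum of the spectral gap from \cite{Jones_84,Yanagida_85} and the essential-spectrum bound $\sigma$. A minor point to be careful about is that the $\eps^{-1}$ weighting changes neither the spectrum nor the domain, since it is a bounded, boundedly invertible similarity on the second component; I would state this explicitly so that the transfer of the Jones--Yanagida conclusions is unambiguous.
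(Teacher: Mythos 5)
Your proposal follows essentially the same route as the paper: both reduce the lemma to the spectral results of \cite{Jones_84} and \cite{Yanagida_85} for the fast pulse, use translation invariance ($L_0\tau=0$ with $\tau=-\p_z\Phi$) to exhibit the zero eigenvalue, and take $\beta$ as the minimum of the Jones--Yanagida spectral gap and the essential-spectrum bound. The one substantive point you gloss over is the function-space transfer: Jones and Yanagida locate the spectrum of the linearization on the space of \emph{bounded continuous} functions, whereas $L_0$ here acts on $L^2$, and a priori the point spectra on the two spaces need not agree. The paper bridges this by observing that any generalized eigenfunction of $L_0$ on $L^2$ lies in its domain $H^{2,1}$ (Lemma~\ref{lem:L-domain}) and hence, in one space dimension, is bounded and continuous; therefore the discrete spectrum on $L^2$ is contained in the discrete spectrum on the bounded-continuous space, and the cited results apply. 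You instead flag the $\eps^{-1}$ weighting of the inner product, which is indeed harmless (and which the paper does not even mention), but the change of underlying space is the reduction that actually requires an argument. Your treatment of the adjoint also differs in mechanism: you invoke the duality of Riesz projections (the adjoint of a rank-one spectral projection is rank one), while the paper argues via the Fredholm index, using Lemma~\ref{lem:compact} to conclude that $L_0$ has the same index as $\bar{L}_0$, namely zero; both arguments are valid and yield the simplicity of $0$ for $L_0^*$.
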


\begin{proof} Let $\tau=-\partial_z\Phi$ be the
	tangent vector to $\cM$ at $\Phi$.
	We have argued above that
	$L\tau=0$ by translation invariance,
	that is, 0 is an eigenvalue of $L$, 
	with eigenfunction $\tau$.
	It remains to show that the eigenvalue $0$ is simple, and 
        that there are no other eigenvalues with nonnegative
        real part.
	
	The operator $L_0$ agrees with the
	linearization of the FHN system in one
	spatial dimension.  The spectrum of this
	operator in the space of bounded continuous
	functions was analyzed by Jones~\cite{Jones_84} 
	and Yanagida~\cite{Yanagida_85}. 
	Specifically, they proved that
	for $\eps>0$ sufficiently small,
	$0$ is a simple eigenvalue,
        and all other eigenvalues lie in some half-plane 
        $\{\lambda\in \bC\ \vert\ 
	\Re\,\lambda<-\beta\}$.
	
	We claim that the discrete spectrum of $L_0$
	on $L^2(\R)$ is contained in its discrete
	spectrum on the space of
	bounded continuous functions. Indeed, any generalized 
	eigenfunction
	must lie in the domain of $L_0$, given by the mixed
	Sobolev space $H^{2,1}$, see Lemma~\ref{lem:L-domain}.
	In particular, the generalized eigenfunctions are bounded
	and continuous. Therefore the results of 
	Jones~\cite{Jones_84} and Yanagida~\cite{Yanagida_85} also
	apply to $L_0$. 
	
	Finally, by Lemma~\ref{lem:compact}, $L_0$ is a Fredholm operator
	of the same index as $\bar{L}_0$. The Fredholm index
	of $\bar{L}_0$ is zero, because 0 lies in its resolvent 
	set by Lemmas~\ref{lem:Lbar} and~\ref{lem:dissipative}. 
	Therefore, 0 is a simple eigenvalue also for $L_0^*$.
\end{proof} 

Combining Lemmas~\ref{lem:ess},~\ref{lem:Ln}, and~\ref{lem:L0}, 
we conclude that 
$$
\spec\, (L)\subset \{0\}\cup \left \{\lambda\in \bC\ \big\vert
\ \Re\,\lambda\le -\sigma\right\}\,,
$$
where $\sigma =\min \{\alpha,\beta,\eps\gamma\}$, and
$\beta$ is determined
by Lemma~\ref{lem:L0}.  Since the eigenvalue at 
zero is isolated, the Riesz projection to the zero eigenspace
is defined by the line integral
$$
P = \frac{1}{2 \pi i} \oint_{\Gamma_0} (\lambda-L)^{-1} d\lambda\,, 
$$
where $\Gamma_0 \subset \bC$ is any simple closed positively 
oriented curve that separates zero from 
the remainder of the spectrum of $L$ \cite[Chapter 2]{KP13}. 
By definition, $P$ is a bounded linear operator that
commutes with $L$ and $e^{tL}$. 

\begin{lemma} [Spectral projection]
\label{lem:P}
	Under the assumptions of Lemma~\ref{lem:L0}, 
	the Riesz projection 
	is given by 
$$
	Pu =\langle u, \tau^*\rangle \tau\,, \qquad u\in L^2\,,
$$
	where $\tau=-\p_z \phi_1$, and $\tau^*$ is the
	eigenfunction of the adjoint $L^*$ corresponding 
	to the zero eigenvalue, normalized 
	to $\langle \tau,\tau^*\rangle =1$.
\end{lemma}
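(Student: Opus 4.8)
The plan is to identify the Riesz projection $P$, which we already know is bounded, commutes with $L$, and has rank one (since $0$ is a simple eigenvalue of $L_0$ and hence of $L$, by Lemma~\ref{lem:L0} together with the spectral decomposition into angular modes, all positive modes contributing nothing near zero by Lemma~\ref{lem:Ln}). A rank-one projection onto the span of $\tau$ must have the form $Pu = \ell(u)\,\tau$ for some bounded linear functional $\ell$ on $L^2$; the task is to show that $\ell(u) = \langle u,\tau^*\rangle$ with $\tau^*$ as described.

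First I would record that $\Ran P = \Ker L = \mathrm{span}\{\tau\}$: the inclusion $\mathrm{span}\{\tau\}\subseteq \Ran P$ holds because $L\tau = 0$ forces $\tau$ into the zero generalized eigenspace, and equality holds because that eigenspace is one-dimensional. Hence $Pu = \ell(u)\,\tau$ with $\ell\in (L^2)^*$, so by Riesz representation $\ell(u) = \langle u, w\rangle$ for a unique $w\in L^2$. Next I would use $P^* = \frac{1}{2\pi i}\oint_{\bar\Gamma_0}(\bar\lambda - L^*)^{-1}\,d\lambda$ (the complex conjugate contour still encircles only $0$, since $\spec(L^*) = \overline{\spec(L)}$), so $P^*$ is the Riesz projection of $L^*$ at its zero eigenvalue; in particular $P^*$ is also rank one and $\Ran P^* = \Ker L^*$. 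Since $w = P^*(\text{something})$ — concretely, $P^*u = \langle \tau, u\rangle\, w$, so $\Ran P^* = \mathrm{span}\{w\}$ — we get $w\in \Ker L^*$, i.e. $w$ is a zero-eigenfunction of $L^*$. That $\Ker L^*$ is one-dimensional is exactly the ``$0$ is a simple eigenvalue of $L_0^*$'' clause of Lemma~\ref{lem:L0} (again combined with the angular-mode decomposition, the positive modes of $L^*$ having spectrum in $\Re\lambda\le-\sigma$ by duality from Lemma~\ref{lem:Ln}). So $w = c\,\tau^*$ for the normalized adjoint eigenfunction $\tau^*$.

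Finally I would pin down the constant $c$ using idempotency: $P\tau = \tau$ (since $\tau\in\Ran P$), and $P\tau = \langle \tau, w\rangle\,\tau = c\,\langle\tau,\tau^*\rangle\,\tau = c\,\tau$ by the normalization $\langle\tau,\tau^*\rangle = 1$. Therefore $c = 1$ and $Pu = \langle u,\tau^*\rangle\,\tau$, as claimed. (One should also note $\langle\tau,\tau^*\rangle\neq 0$ is automatic: otherwise $P\tau = 0$ would contradict $\tau\in\Ran P\setminus\{0\}$, so the normalization is legitimate.)

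The only real subtlety — and the step I would be most careful with — is the reduction of the simplicity of the eigenvalue of the full operator $L$ (and $L^*$) on $L^2(\cS_R)$ to the one-dimensional statement for $L_0$ on $L^2(\R)$: one must invoke that $L = \bigoplus_{n\ge 0} L_n$ is a genuine orthogonal direct sum decomposition, that the Riesz projection respects it, and that for $n>0$ the spectrum of $L_n$ (and of $L_n^*$) stays in $\{\Re\lambda\le-\sigma\}$ by Lemma~\ref{lem:Ln}, so that only the $n=0$ block contributes to the zero eigenspace and its adjoint; everything else is bookkeeping with the resolvent integral. I would keep the exposition short by citing Lemma~\ref{lem:L0} and the preceding discussion for these facts.
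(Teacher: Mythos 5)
Your argument is correct and arrives at the same formula, but by a different mechanism than the paper's proof. The paper verifies rather than derives: it notes that the Riesz projection is uniquely determined by being the identity on $\Ker L$ and zero on $\Ran L$ (these are complementary because $0$ is a simple isolated eigenvalue), and then checks that the candidate $u\mapsto\langle u,\tau^*\rangle\tau$ has both properties, the second via $\langle Lv,\tau^*\rangle=\langle v,L^*\tau^*\rangle=0$; non-orthogonality of $\tau$ and $\tau^*$ is obtained from $\tau\notin\Ran L=(\Ker L^*)^\perp$. You instead derive the formula: rank one forces $Pu=\langle u,w\rangle\tau$ by Riesz representation, the identity of $P^*$ with the Riesz projection of $L^*$ at $0$ places $w$ in $\Ker L^*$, and idempotency fixes the constant and gives $\langle\tau,\tau^*\rangle\neq 0$ for free. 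Both routes consume exactly the same spectral input from Lemma~\ref{lem:L0} (simplicity of $0$ for $L_0$ and $L_0^*$, combined with the mode decomposition and Lemma~\ref{lem:Ln} so that only the $n=0$ block matters); yours additionally invokes the standard fact that the adjoint of a Riesz projection is the Riesz projection of the adjoint over the conjugated contour, while the paper's relies on the decomposition $L^2=\Ker L\oplus\Ran L$. What your version buys is that the formula is forced rather than guessed; what it costs is the extra bookkeeping with $P^*$. Two cosmetic slips, harmless here because all coefficients and eigenfunctions are real: with the sesquilinear pairing of Eq.~\eqref{eq:eps-inner} one gets $P^*v=\langle v,\tau\rangle\,w$ rather than $\langle\tau,v\rangle\,w$, and the idempotency computation yields $\bar c=1$ rather than $c=1$.
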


\begin{proof} The eigenfunction $\tau^*$ is well-defined
	because  0 is a simple eigenvalue for both $L$ and
	$L^*$ by Lemma~\ref{lem:L0}. In particular,
	since $\tau$ does not lie in the range of $L$,
	$\tau^*$ is not orthogonal to $\tau$.
	The Riesz projection $P$ is uniquely determined
	by its action on the nullspace and range of $L$,
	i.e., the properties that $P\tau=\tau$ and $PLv=0$
	for all $v\in H^{2,1}$ hold. We verify that $\langle\tau,\tau^*\rangle\tau=\tau$,
	and $\langle Lv, \tau^*\rangle \tau = \langle v, L^*\tau^*\rangle \tau =0$,
	as required.
\end{proof}

\begin{lemma}[Resolvent estimate, $n=0$]
	\label{lem:Res0}
	Under the assumptions of Lemma~\ref{lem:L0}, 
	let $P$ be the projection to the nullspace of $L$
	constructed in Lemma~\ref{lem:P}, let 
	$Q=1-P$ be the complementary projection,
	and let $Q_0$ be its restriction to the
	$n=0$ subspace of $L^2$.
	For every $\sigma>\min\{\alpha,\beta,\eps\gamma\}$,
	there exists a positive constant $C$ such that
$$
	\| (\lambda-L_0)^{-1} Q_0\| \le C,\qquad (\Re\,\lambda \geq -\sigma)\,.
$$
Here, $\beta$ is as in Lemma~\ref{lem:L0}.
\end{lemma}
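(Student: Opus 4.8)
The plan is to establish the resolvent bound $\|(\lambda - L_0)^{-1} Q_0\| \le C$ on the closed half-plane $\{\Re\lambda \ge -\sigma\}$ by combining the spectral information from Lemma~\ref{lem:L0} with an estimate at large $|\lambda|$ coming from the fact that $L_0$ is a bounded perturbation of the dissipative operator $\bar L_0$. First I would note that $Q_0$ maps $L^2(\R;\bC^2)$ onto the $L$-invariant subspace $\Ran L_0$ (the range complement of the one-dimensional nullspace), so $L_0$ restricted there has spectrum $\spec L_0 \setminus \{0\}$, which by Lemma~\ref{lem:L0} is contained in $\{\Re\lambda \le -\beta\} \subset \{\Re\lambda < -\sigma\}$ for the given $\sigma$. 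Hence $(\lambda - L_0)^{-1}Q_0$ is a well-defined analytic operator-valued function on the whole half-plane $\{\Re\lambda > -\sigma\}$ (and extends continuously to its closure), and it only remains to bound it uniformly.

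The argument then splits into two regimes. For $|\lambda|$ large, I would use that $L_0 = \bar L_0 + V_0$ where $V_0$ is the multiplication operator by $f'(\phi_1) - f'(0)$, which is bounded with $\|V_0\| =: b$ (cf.\ Eq.~\eqref{eq:b}), and $\bar L_0 + \sigma$ is dissipative by Lemma~\ref{lem:Lbar} (restricted to $n=0$), so by Lemma~\ref{lem:dissipative} $\|(\lambda - \bar L_0)^{-1}\| \le (\Re\lambda + \sigma)^{-1}$. One can do slightly better: writing $\lambda = -\sigma + \mu$ with $\Re\mu \ge 0$, we have $\|(\lambda - \bar L_0)^{-1}\| \le |\mu|^{-1}$ as well, since $\bar L_0 + \sigma - \mu$ is injective with the sharper bound whenever $\Re\mu \ge 0$ and $|\Im\mu|$ large — more simply, on the vertical line the resolvent of a dissipative operator decays like $|\Im\lambda|^{-1}$. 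So for $|\lambda|$ large enough that $\|V_0\|\,\|(\lambda - \bar L_0)^{-1}\| \le \tfrac12$, the Neumann series gives $\|(\lambda - L_0)^{-1}\| \le 2\|(\lambda-\bar L_0)^{-1}\|$, which is itself bounded; composing with the bounded operator $Q_0$ finishes this regime. For $|\lambda|$ bounded, i.e.\ $\lambda$ in the compact set $\{\Re\lambda \ge -\sigma,\ |\lambda| \le \Lambda\}$, the function $\lambda \mapsto \|(\lambda - L_0)^{-1}Q_0\|$ is continuous (as established above, there are no singularities in this region) and hence attains a finite maximum. Taking $C$ to be the larger of the two bounds completes the proof.

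The main obstacle is the large-$|\lambda|$ estimate near the boundary line $\Re\lambda = -\sigma$: the naive dissipativity bound $(\Re\lambda+\sigma)^{-1}$ blows up there, so one genuinely needs the $|\Im\lambda|^{-1}$-type decay of the resolvent of the (essentially normal/dissipative) operator $\bar L_0$ along vertical lines. This is where the explicit Fourier-multiplier structure of $\bar L_0$ helps: $\bar L_0$ acts as multiplication by the $2\times2$ matrix symbol $m(k,0)$ in Eq.~\eqref{eq:multOp}, and one checks directly from $\|(\lambda - m(k,0))^{-1}\|$ that for $\Re\lambda \ge -\sigma$ this norm is $\lesssim (1+|\lambda|)^{-1}$ uniformly in $k\in\R$, after which the uniform boundedness over the half-plane is immediate and the Neumann-series perturbation by the bounded $V_0$ goes through verbatim. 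If one prefers to avoid the symbol computation, an alternative is to invoke Pr\"uss's theorem (Theorem~\ref{thm:Pruss}) applied to $L_0 + \sigma'$ on $\Ran L_0$ for $\sigma < \sigma' < \min\{\alpha,\beta,\eps\gamma\}$ to get exponential decay of $e^{tL_0}Q_0$, and then recover the resolvent bound from the Laplace-transform representation $(\lambda - L_0)^{-1}Q_0 = \int_0^\infty e^{-\lambda t} e^{tL_0}Q_0\,dt$, valid for $\Re\lambda > -\sigma'$; but this is somewhat circular in the overall logic, so the direct symbol estimate is cleaner.
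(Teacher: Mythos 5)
Your overall architecture --- compactness on a bounded region plus a perturbative estimate off the dissipative operator $\bar L_0$ for large $|\lambda|$ --- matches the paper's proof (which splits the half-plane into the regions $S_1$, $S_2$, $S_3$), but the step you yourself flag as the main obstacle is where the argument breaks. The claimed symbol bound $\|(\lambda-m(k,0))^{-1}\|\lesssim (1+|\lambda|)^{-1}$ uniformly in $k\in\R$ for $\Re\,\lambda\ge-\sigma$ is false: take $\lambda=-\sigma+ick$ and let $k\to\infty$; then $\lambda-m_{22}=\eps\gamma-\sigma$ and $\lambda-m_{11}=k^2+\alpha-\sigma$, so the $(2,2)$-entry $(\lambda-m_{11})/\det(\lambda-m)$ converges to $(\eps\gamma-\sigma)^{-1}\neq 0$ while $|\lambda|\to\infty$. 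Equivalently, $\spec(\bar L_0)$ contains the branch $\lambda_+(k,0)\sim ick-\eps\gamma$, which runs up the vertical line $\Re\,\lambda=-\eps\gamma$ to $\pm i\infty$, so the resolvent stays bounded \emph{below} on the strip $-\sigma\le\Re\,\lambda\le 2$ as $|\Im\,\lambda|\to\infty$; this is precisely why the paper remarks that $\bar L$ is not sectorial, and the auxiliary claim that a dissipative operator's resolvent decays like $|\Im\,\lambda|^{-1}$ along vertical lines also fails here. Consequently the smallness condition $\|V_0\|\,\|(\lambda-\bar L_0)^{-1}\|\le\frac12$ cannot be arranged by taking $|\Im\,\lambda|$ large, and your Neumann series does not close in the critical regime ($\Re\,\lambda$ in a bounded strip, $|\Im\,\lambda|$ large). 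Your fallback via Pr\"uss and the Laplace transform is, as you note, circular: the paper uses Pr\"uss in the opposite direction, deducing semigroup decay from exactly the resolvent bound being proved here.

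The repair is to exploit the matrix structure of the perturbation rather than its norm: $V$ has only a nonzero $(1,1)$-entry, so $\|(\lambda-\bar L_0)^{-1}V\|$ is controlled by the first column of the resolvent alone, i.e.\ by $\|((\lambda-\bar L_0)^{-1})_{11}\|$ and $\|((\lambda-\bar L_0)^{-1})_{21}\|$. These entries \emph{do} tend to zero uniformly on the strip as $|\Im\,\lambda|\to\infty$, because each carries a factor of $(\lambda-m_{11})^{-1}$, and $\Re\,(\lambda-m_{11})\ge k^2$ together with the imaginary part $\Im\,\lambda-ck$ forces decay; the offending $(2,2)$-corner of the symbol never appears alone. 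This is the content of the paper's Lemma~\ref{lem:S3}. A minor additional point: the spectrum of $L_0$ restricted to the range of $Q_0$ is not controlled by Lemma~\ref{lem:L0} alone, which concerns only the discrete spectrum; you also need Lemma~\ref{lem:ess} to place the essential spectrum in $\{\Re\,\lambda\le-\min\{\alpha,\eps\gamma\}\}$ before concluding analyticity of the resolvent on the closed half-plane.
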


The proof of this lemma  is deferred to the next subsection.

\subsection{Proof of Lemma~\ref{lem:Res0}}

We estimate the resolvent of the operator $L_0$, 
given by Eq.~\eqref{eq:Ln} with $n=0$, separately in the three regions
\begin{align*}
	S_1 &:= \{\lambda\in\bC \;\vert\; -\sigma \leq \Re\, \lambda \leq 2,\,
|\Im\,\lambda| \leq N\}\,,\\
	S_2 &:= \{\lambda\in\bC \;\vert\; \Re\, \lambda \ge 2\}, \\
	S_3 &:= \{\lambda\in\bC \;\vert\; - \sigma 
\leq \Re\,\lambda \leq 2,\, |\Im\,\lambda| \geq N\}\,.
\end{align*}
Here, $\sigma<\min\{\alpha,\beta, \eps\gamma\}$, 
and $\beta$ is as in Lemma~\ref{lem:L0}.
The constant $N$ will be chosen in the proof of
Lemma~\ref{lem:S3}.

On $S_1$, we appeal to compactness.

\begin{lemma}[Resolvent estimate on $S_1$]
\label{lem:S1}
For any $N>0$,
$$
\sup_{\lambda\in S_1} \| (\lambda-L_0)^{-1} Q_0\|<\infty\,.
$$
\end{lemma}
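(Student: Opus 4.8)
The plan is to exploit the compactness of the region $S_1$ together with the analyticity of the resolvent on the resolvent set. The key point is that, by Lemmas~\ref{lem:ess}, \ref{lem:Ln}, and \ref{lem:L0}, the only spectral point of $L$ (hence of $L_0$) with $\Re\,\lambda\ge-\sigma$ is the simple eigenvalue at $0$, and the closed bounded set $\overline{S_1}$ meets $\spec(L_0)$ only at $\lambda=0$. So on $\overline{S_1}\setminus\{0\}$ the map $\lambda\mapsto(\lambda-L_0)^{-1}$ is well-defined and continuous (indeed analytic) as a bounded-operator-valued function. The only thing to rule out is a blow-up as $\lambda\to 0$, and this is exactly where the projector $Q_0$ is needed.

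First I would record the Laurent expansion of $(\lambda-L_0)^{-1}$ near the isolated eigenvalue $0$. Since $0$ is a \emph{simple} eigenvalue of $L_0$ (Lemma~\ref{lem:L0}), the pole is simple with residue equal to the Riesz projection $P_0$ (the $n=0$ restriction of $P$ from Lemma~\ref{lem:P}), and there is no nilpotent part; thus
$$
(\lambda-L_0)^{-1} = \frac{1}{\lambda}P_0 + H(\lambda)\,,
$$
where $H(\lambda)$ is analytic in a neighborhood of $0$. Multiplying on the right by $Q_0 = 1-P_0$ and using $P_0 Q_0 = 0$ gives $(\lambda-L_0)^{-1}Q_0 = H(\lambda)Q_0$, which extends to a continuous (analytic) operator-valued function at $\lambda=0$. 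Hence $\lambda\mapsto(\lambda-L_0)^{-1}Q_0$ is continuous on all of $\overline{S_1}$, with the value at $0$ interpreted as $H(0)Q_0$.

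Finally, I would invoke compactness: $\overline{S_1}$ is a closed bounded subset of $\bC$, and a continuous function on a compact set is bounded, so
$$
\sup_{\lambda\in S_1}\|(\lambda-L_0)^{-1}Q_0\| \le \sup_{\lambda\in\overline{S_1}}\|(\lambda-L_0)^{-1}Q_0\| < \infty\,.
$$
The main (and essentially only) obstacle is handling the apparent singularity at $\lambda=0$; once the simplicity of the eigenvalue is used to remove it via the factor $Q_0$, the rest is a routine compactness argument. I would also note in passing that continuity of the resolvent in operator norm on the resolvent set is standard, following from the Neumann series $(\mu-L_0)^{-1}=\sum_{k\ge0}(\lambda-\mu)^k(\lambda-L_0)^{-(k+1)}$ valid for $|\lambda-\mu|<\|(\lambda-L_0)^{-1}\|^{-1}$.
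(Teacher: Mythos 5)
Your proposal is correct and follows essentially the same route as the paper: both arguments observe that $\overline{S_1}$ meets $\spec(L_0)$ only at the isolated simple eigenvalue $0$, that composing with $Q_0$ removes the (simple) pole there --- the paper phrases this as analyticity of the resolvent of $L_0$ restricted to the range of $Q_0$, you phrase it via the Laurent expansion --- and then conclude by boundedness of a continuous operator-valued function on a compact set. No gap.
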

\begin{proof} By Lemma~\ref{lem:L0}, $S_1$ intersects
the spectrum of $L_0$ only at the simple eigenvalue $0$.
Since the restriction of
$L_0$ to the range of $Q_0$ has no spectrum in
$S_1$, its resolvent is an analytic function of
$\lambda$, and hence bounded on the compact set~$S_1$.
\end{proof}

The half-plane $S_2$ is treated by a dissipativity estimate.

\begin{lemma}[Resolvent estimate on $S_2$]
For any $N>0$,
\label{lem:S2} 
\begin{equation}\label{eq:S2}
\sup_{\lambda \in S_2} \big\| (\lambda-L_0)^{-1} \big\| 
\leq 1\,.
\end{equation}
\end{lemma}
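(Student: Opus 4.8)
The plan is to apply the dissipativity estimate already established in Eq.~\eqref{eq:L-upperbound}, which says that $-1+L$ is dissipative, hence so is $-1+L_0$ (the restriction of $L$ to the $n=0$ Fourier mode is a restriction to an invariant subspace, so the inequality $\Re\,\langle L_0 v, v\rangle \le \|v\|^2$ is inherited from $\Re\,\langle Lv,v\rangle\le\|v\|^2$). Equivalently, $L_0-1$ is dissipative on $L^2(\R;\bC^2)$ with the weighted inner product from Eq.~\eqref{eq:eps-inner}. By Lemma~\ref{lem:dissipative} applied to $B=L_0-1$, its resolvent is bounded by $\|(\mu-(L_0-1))^{-1}\|\le 1/\Re\,\mu$ whenever $\Re\,\mu>0$. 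Substituting $\mu=\lambda-1$ gives $\|(\lambda-L_0)^{-1}\|\le 1/(\Re\,\lambda-1)$ for $\Re\,\lambda>1$.

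On $S_2$ we have $\Re\,\lambda\ge 2$, so $\Re\,\lambda-1\ge 1$, and therefore $\|(\lambda-L_0)^{-1}\|\le 1/(\Re\,\lambda-1)\le 1$, which is exactly Eq.~\eqref{eq:S2}. The only small point to check is that $L_0$ is indeed closed and densely defined on its natural domain so that Lemma~\ref{lem:dissipative} applies: this follows because $L_0$ is the restriction of the closed operator $L$ (Lemma~\ref{lem:L-domain}) to the spectral/Fourier subspace of the $n=0$ mode, which is a closed invariant reducing subspace; alternatively $L_0=\bar L_0+V_0$ is a bounded perturbation of a generator of a contraction semigroup by the one-variable analogues of Lemmas~\ref{lem:compact} and~\ref{lem:Lbar}. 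There is no real obstacle here — the statement is an immediate corollary of the dissipativity bound, and the proof is essentially one line once the substitution $\mu=\lambda-1$ is made. If anything, the only thing worth spelling out is that the estimate holds uniformly on the half-plane $S_2$ (not just on a compact subset), which is automatic from the explicit bound $1/(\Re\,\lambda-1)$.

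\begin{proof}
By Eq.~\eqref{eq:L-upperbound}, the operator $L-1$ is dissipative on $L^2$; restricting to the invariant subspace corresponding to the Fourier mode $n=0$, the operator $L_0-1$ is dissipative as well. It is closed and densely defined, being the restriction of the closed operator $L$ (Lemma~\ref{lem:L-domain}) to a reducing subspace. By Lemma~\ref{lem:dissipative} applied to $B=L_0-1$, we have $\|(\mu-(L_0-1))^{-1}\|\le(\Re\,\mu)^{-1}$ for $\Re\,\mu>0$. Taking $\mu=\lambda-1$ yields
$$
\big\|(\lambda-L_0)^{-1}\big\|\le\frac{1}{\Re\,\lambda-1}\qquad(\Re\,\lambda>1)\,.
$$
For $\lambda\in S_2$ we have $\Re\,\lambda\ge 2$, hence $\Re\,\lambda-1\ge 1$, and Eq.~\eqref{eq:S2} follows.
\end{proof}
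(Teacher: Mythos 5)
Your proof is correct and follows the same route as the paper: restrict the dissipativity bound $\Re\,\langle Lv,v\rangle\le\|v\|^2$ from Eq.~\eqref{eq:L-upperbound} to the $n=0$ mode, apply Lemma~\ref{lem:dissipative} to the shifted operator to get $\|(\lambda-L_0)^{-1}\|\le(\Re\,\lambda-1)^{-1}$ for $\Re\,\lambda>1$, and conclude using $\Re\,\lambda\ge 2$ on $S_2$. The extra remarks about closedness and the reducing subspace are fine but not needed beyond what the paper already records.
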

\begin{proof} 
Since $L_0$  is the restriction of $L$ to a subspace, 
Eq.~\eqref{eq:L-upperbound} implies that
\begin{align*}
\Re\, \langle L_0 v, v \rangle &\le \| v \|^2\,.
\end{align*}
By Lemma~\ref{lem:dissipative},
$\lambda-L_0 $ is invertible for $\Re\,\lambda > 1$ 
and the inverse satisfies
$$
\| (\lambda-L_0)^{-1} \|  
\leq \frac{1}{Re\, \lambda-1} \qquad (Re\, \lambda>1)\,.
$$
Since $\Re\,\lambda\ge 2$ on $S_2$, this 
implies Eq.~\eqref{eq:S2}. 
\end{proof}

For the region $S_3$, some explicit estimates are required.

\begin{lemma} [Resolvent estimate on $S_3$]
\label{lem:S3} If $N$ is sufficiently large then
$$
\sup_{\lambda \in S_3} \big\| (\lambda-L_0)^{-1} \big\|
\leq \frac{2}{\min\{\alpha,\eps\gamma\}-\sigma}\,.
$$
Here, $\sigma<\min\{\alpha,\beta, \eps\gamma\}$, and
$\beta$ is as in Lemma~\ref{lem:L0}.
\end{lemma}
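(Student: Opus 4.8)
The plan is to estimate the resolvent $(\lambda - L_0)^{-1}$ directly on the strip $S_3$, where $|\Im\,\lambda|$ is large, by exploiting the fact that the first component of $L_0$ carries the second-derivative term $\p_z^2$, which dominates when $\lambda$ has large imaginary part. Concretely, given $v = (v_1,v_2)\in H^{2,1}$ and $\lambda\in S_3$, I would set $w = (\lambda - L_0)v$ and write out the two scalar equations:
\begin{align*}
(\lambda - \p_z^2 - c\p_z - f'(\phi_1))v_1 + v_2 &= w_1\,,\\
-\eps v_1 + (\lambda - c\p_z + \eps\gamma)v_2 &= w_2\,.
\end{align*}
The second equation lets me solve for $v_2$ in terms of $v_1$ and $w_2$ via the scalar operator $(\lambda - c\p_z + \eps\gamma)^{-1}$, whose $L^2$-norm is at most $1/(\Re\,\lambda + \eps\gamma) \le 1/(\eps\gamma - \sigma)$ on $S_3$ since $\p_z$ is skew-adjoint. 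Substituting back into the first equation yields a closed scalar equation for $v_1$ of the form $(\lambda - \p_z^2 - c\p_z - f'(\phi_1) + \eps(\lambda - c\p_z + \eps\gamma)^{-1})v_1 = w_1 - (\lambda - c\p_z + \eps\gamma)^{-1}w_2$, where the coupling correction $\eps(\lambda - c\p_z+\eps\gamma)^{-1}$ is uniformly small in norm.

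Next I would take the $L^2$ inner product of the scalar $v_1$-equation with $v_1$ and extract both real and imaginary parts. The real part gives $\Re\,\lambda\|v_1\|^2 + \|\p_z v_1\|^2 - \int f'(\phi_1)|v_1|^2 + \Re\langle \eps(\cdots)^{-1}v_1, v_1\rangle \le \|v_1\|(\text{RHS norm})$; using $f'(\phi_1)\le f'(0)+1 = 1-\alpha$ and $\Re\,\lambda\ge -\sigma$ this controls $\|\p_z v_1\|^2$ up to $\|v_1\|^2$ and the data. The imaginary part gives $\Im\,\lambda\,\|v_1\|^2 = \Im\langle\cdots\rangle$, and since $|\Im\,\lambda|\ge N$ is large while the terms $\langle\p_z^2 v_1,v_1\rangle = -\|\p_z v_1\|^2$ and $\langle f'(\phi_1)v_1,v_1\rangle$ are real, the only imaginary contributions come from $c\langle\p_z v_1,v_1\rangle$ (purely imaginary, of size $\lesssim \|\p_z v_1\|\|v_1\|$) and the small correction term. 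Combining, for $N$ chosen large enough I get $\|v_1\| \lesssim \frac{1}{N}(\|\p_z v_1\| + \|v_1\|) + (\text{data})$, which absorbed together with the real-part bound yields $\|v_1\|\lesssim \|w_1\| + \|w_2\|$ with constant approaching $2/(\min\{\alpha,\eps\gamma\}-\sigma)$ (or better) as $N\to\infty$; then $\|v_2\|\lesssim \|v_1\| + \|w_2\|$ from the second equation. Finally, since the $Q_0$ in the statement is a bounded projection, $\|(\lambda - L_0)^{-1}\|$ itself (not just composed with $Q_0$) being bounded on $S_3$ is what we prove here — on $S_3$ there is no spectrum near zero so the full resolvent is fine.

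The main obstacle I anticipate is bookkeeping the constants carefully enough to land the clean bound $2/(\min\{\alpha,\eps\gamma\}-\sigma)$ stated in the lemma, rather than some unquantified constant: one must track how the $\p_z v_1$ terms are absorbed, verify that the small coupling term $\eps(\lambda - c\p_z + \eps\gamma)^{-1}$ genuinely contributes $o(1)$ as $N\to\infty$ (its norm is bounded by $\eps/(\eps\gamma - \sigma)$ uniformly, so one actually wants the $N$-dependence to come solely from the imaginary-part argument forcing $\|v_1\|$ small relative to $\|\p_z v_1\|$), and confirm that the choice of $N$ does not depend on $\lambda$. A secondary point is ensuring that the domain manipulations are legitimate — that $(\lambda - c\p_z + \eps\gamma)^{-1}$ maps $L^2$ into the domain of $\p_z$ so the substitution producing the scalar $v_1$-equation is valid in $H^{2,1}$ — but this follows from the explicit Fourier representation of the skew-adjoint generator $c\p_z$, exactly as in Lemma~\ref{lem:dissipative}.
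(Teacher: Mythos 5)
Your proposal is correct in outline but takes a genuinely different route from the paper. The paper's proof splits $L_0=\bar L_0+V$, where $\bar L_0$ is the constant-coefficient asymptotic operator and $V$ is the bounded multiplication by $f'(\phi_1)-f'(0)$; it then uses the resolvent identity together with an explicit Fourier-multiplier/Cramer's-rule computation of the first column of $(\lambda-\bar L_0)^{-1}$ to show $\|(\lambda-\bar L_0)^{-1}V\|\le\frac12$ on $S_3$ for $N$ large, and closes with a Neumann series and the dissipativity bound $\|(\lambda-\bar L_0)^{-1}\|\le(\Re\,\lambda+\min\{\alpha,\eps\gamma\})^{-1}$. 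You instead eliminate $v_2$ by a Schur-complement step (legitimate, since $\Re\,\lambda+\eps\gamma\ge\eps\gamma-\sigma>0$ on $S_3$ makes $\lambda-c\p_z+\eps\gamma$ boundedly invertible) and run an energy estimate on the resulting scalar equation, extracting smallness of $\|v_1\|$ from the imaginary part of the quadratic form; note that the coupling term $\eps(\lambda-c\p_z+\eps\gamma)^{-1}$ even has nonnegative real part in the quadratic form, so it only helps in the real-part estimate. Your method avoids computing matrix resolvent entries and handles the variable coefficient $f'(\phi_1)$ directly (needing only its sup bound, as in Eq.~\eqref{eq:V-upperbound}), which is arguably more robust when an explicit Fourier representation of the full system is unavailable; the paper's method makes the $N\to\infty$ smallness completely mechanical and isolates all the variable-coefficient dependence in a single operator norm. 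Two points you should make explicit: first, an a priori estimate of the form $\|v\|\le C\|(\lambda-L_0)v\|$ only yields a resolvent bound once you know $\lambda-L_0$ is invertible on $S_3$, but this is already supplied by Lemmas~\ref{lem:ess} and~\ref{lem:L0}, which place $\spec(L_0)$ in $\{0\}\cup\{\Re\,\lambda\le-\min\{\alpha,\beta,\eps\gamma\}\}$, so $S_3$ lies in the resolvent set for every $N>0$; second, the final constant comes out as $\frac{1}{\eps\gamma-\sigma}+O(N^{-1})$ in your scheme, which is indeed at most $\frac{2}{\min\{\alpha,\eps\gamma\}-\sigma}$ for $N$ large, but the bookkeeping through the weighted norm $\|w\|^2=\|w_1\|^2+\eps^{-1}\|w_2\|^2$ should be carried out to confirm this (the mention of $Q_0$ in your last paragraph is a red herring --- it belongs to Lemma~\ref{lem:Res0}, not to this lemma, and you correctly conclude that the full resolvent is what is bounded here).
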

\begin{proof} 
For $\lambda\in S_3$, we write $L_0=\bar L_0+V$, where 
$V$ is defined by this relation and
$$
\bar L_0 :=
\begin{pmatrix}
\p_z^2 + c \p_z + f'(0) & -1 \\
\eps & c\p_z - \eps\gamma
\end{pmatrix}\,.
$$
Since $\sigma + \bar L_0$ is dissipative, the resolvent set
of $\bar L_0$ contains the half-plane $\{\lambda \in \bC\ \vert\ 
\Re\,\lambda\ge -\sigma\}$.
We want to solve for $(\lambda-L_0)^{-1}$
in the resolvent identity
\begin{equation}
\label{eq:resol-id0}
(\lambda-\bar L_0)^{-1}-(\lambda-L_0)^{-1} = 
-(\lambda-\bar L_0)^{-1} V (\lambda-L_0)^{-1}\,.
\end{equation}

First, we prove that there exists an $N>0$ such that
\begin{equation}\label{eq:S3}
\sup_{\lambda\in S_3} 
\big\| (\lambda-\bar L_0)^{-1}V \big\|\le \frac12.
\end{equation}
Indeed, let $\lambda \in S_3$. Using that the $(1,2)$- and  $(2,2)$-entries of the operator matrix $(\lambda-\bar L_0)^{-1}V$ vanish and that
\begin{equation*}
\bigg\| 
\begin{pmatrix}
a_{11} & 0 \\
a_{21} & 0
\end{pmatrix}
\bigg\|
\leq \| a_{11} \| + \| a_{21} \|,
\end{equation*}
we find
\begin{equation*}
\begin{split}
\| (\lambda-\bar L_0)^{-1}V  \|
& \leq \bigl(\| ((\lambda-\bar L_0)^{-1})_{11}\|
+  \| ((\lambda-\bar L_0)^{-1})_{21}\|\bigr) \,\sup_{y\in\R}
|f'(y)-f'(0)| \,.
\end{split} 
\end{equation*}
Since the last factor is bounded by Eq.~\eqref{eq:b},
to verify Eq.~\eqref{eq:S3} it suffices to show that 
\begin{equation}\label{eq:L-infV}
\lim_{N\to\infty}
\sup_{\lambda\in S_3} \big\| ((\lambda-\bar L_0)^{-1})_{i1} 
\big\| =0\,,\quad i=1,2\,.
\end{equation}
Moreover, since the differential operator
$\bar L_0$ has real coefficients,
we may restrict the supremum to the intersection 
of $S_3$ with the upper half-plane.

In the Fourier representation, $\bar L_0$ 
becomes the matrix multiplication operator 
$$
m(k,0) = 
\begin{pmatrix}
-k^2 + ick -\alpha & -1 \\
\eps & ick - \eps\gamma
\end{pmatrix}\,,\qquad (k\in\R)\,,
$$
see Eq.~\eqref{eq:multOp}.  In particular,
\begin{equation}
\big\| ((\lambda-\bar L_0)^{-1})_{ij} \big\| = 
\sup_{k\in \R} \big|((\lambda-m(k,0))^{-1})_{ij} \big|.
\end{equation}
By Cramer's rule (suppressing the dependence on $k$ in the notation),
\begin{equation*}
((\lambda-m)^{-1})_{11} = \frac{\lambda-m_{22}}{\det(\lambda-m)}\,,\quad
((\lambda-m)^{-1})_{21} = \frac{m_{21}}{\det(\lambda-m)}\,.
\end{equation*}
Passing to reciprocals, we compute  for the first entry
\begin{align*}
\frac{1}{\bigl((\lambda-m)^{-1}\bigr)_{11}}
&= \lambda-m_{11} -\frac {m_{12}m_{21}} {\lambda-m_{22}}\,.
\end{align*}
We next separate the real and imaginary parts.
Since $\Re\, \lambda \ge -\sigma$ on $S_3$, 
and $\sigma\le\min\{\alpha,\eps\gamma\}$,
we have that
$$
\Re\, (\lambda-m_{11})\ge k^2\,,
\quad \Re\, (\lambda-m_{22})\ge \eps\gamma-\sigma>0\,.
$$
Since $m_{12}m_{21}=-\eps<0$, it follows that
\begin{equation*}
\Re\, \frac{1}{((\lambda-m)^{-1})_{11}} \ge k^2\,.
\end{equation*}
For the imaginary part, we have
$$
\Im\, \frac{1}{((\lambda-m)^{-1})_{11}}
\ge \Im\,\lambda-ck+ \eps |\Im\, \lambda-ck|^{-1}\,.
$$
Combining the two estimates yields for $\Im\, \lambda\ge N$
\begin{equation*}
\begin{split}
\frac{1}{|((\lambda-m)^{-1})_{11}|} 
&\ge \max\bigl\{k^2, |\Im \, \lambda-ck|+\eps|\Im\, \lambda-ck|^{-1}\bigr\} \\
&\ge \max\left 
\{\tfrac{N^2}{4c^2}, \tfrac{N}{2}+\tfrac{2\eps}{N}\right\} \to\infty \qquad (N\to\infty)\,.
\end{split}
\end{equation*}
The second inequality above holds since $k\ge \frac{N}{2c}$ 
whenever $|\Im \, \lambda-ck|\le \frac{N}{2}$.
This implies Eq.~\eqref{eq:L-infV} for $i=1$.
Similarly,
$$
\frac{1}{((\lambda-m)^{-1})_{21}}
= \frac{\bigl(\lambda-m_{11}\bigr) \bigl(\lambda-m_{22}\bigr)}
{m_{21}}- m_{12}\,.
$$
As before, we separately estimate the real and imaginary parts
of each of the factors in the numerator
$$
|\lambda-m_{11}| \ge \max\left\{k^2,|\Im\,\lambda-ck| \right\}\,,\quad
|\lambda-m_{22}|\ge \eps\gamma-\sigma\,.
$$
It follows that
\begin{align*}
\left| \frac{1}{((\lambda-m)^{-1})_{12}}\right|
&\ge \frac{\eps\gamma-\sigma}{\eps} 
\max \left\{k^2, |\Im \,\lambda-ck| \right\}-1\\
&\ge \frac{\eps\gamma-\sigma}{\eps}
 \max \left \{ \tfrac{N^2}{4c^2},\tfrac{N}{2} \right \} -1\\
&\quad\to\infty \quad (N\to\infty)\,.
\end{align*}
This implies Eq.~\eqref{eq:L-infV} for $i=2$.

Now choose $N$ so large that Eq.~\eqref{eq:S3} 
holds.  Since $1 - (\lambda - \bar L_0)^{-1}V$ 
is invertible, we can solve for the resolvent
of $L_0$ in Eq.~\eqref{eq:resol-id0}
to obtain 
$$
(\lambda-L_0)^{-1} = 
\big(1 - (\lambda-\bar L_0)^{-1}V \big)^{-1} (
\lambda-\bar L_0)^{-1}.
$$
Using this relation and Eq~\eqref{eq:S3}, we estimate
\begin{align*}
\| (\lambda-L_0)^{-1} \|
& \leq \bigl(\|1 - (\lambda-\bar L_0)^{-1}V\| \bigr)^{-1}
\| (\lambda-\bar L_0)^{-1} \|\\
&\le 2 \| (\lambda-\bar L_0)^{-1}\|\\
&\le \frac{2}{\Re\,\lambda+\min\{\alpha,\eps\gamma\}}\,,
\end{align*}
where the last line follows from
the dissipativity of $\min\{\alpha,\eps\gamma\}+\bar L_0$
by Lemma~\ref{lem:dissipative}.
Since $\Re\,\lambda\ge -\sigma$ on $S_3$, this proves
the claim. 
\end{proof}

\bigskip 
Lemma~\ref{lem:Res0} follows from Lemmas~\ref{lem:S1} - \ref{lem:S3}.

\subsection{Proof of Proposition~\ref{prop:LQ-decay}}
\label{subsec:LQ-decay}

Let $P$ be the projection constructed in Lemma~\ref{lem:P},
and $Q=1-P$ be the complementary projection to the range
of $L$. Choose 
$$\sigma< \min \{\alpha,\beta,\eps\gamma\}\,,
$$
where $\beta$ is the constant from Lemma~\ref{lem:L0}.
We need to find a constant $C>0$ such that
$\|e^{tL}Q\|_{2,1}\le Ce^{-\sigma t}$ for all $t>0$.

Splitting $L$ into the direct sum of its
Fourier modes, we obtain
\begin{equation*}
\big\| (\lambda-L)^{-1} Q \big\| 
= \big\| \oplus_{n\geq 0} (\lambda-L_n)^{-1} Q_n \big\| 
\le \sup_{n \ge 0} 
\left\{\big\| (\lambda-L_n)^{-1} Q_n\big\| \right\}\,,
\end{equation*}
see Eq.~\eqref{eq:Ln}.
Lemmas~\ref{lem:Ln} and~\ref{lem:Res0}
imply that there exists a constant $C>0$ such that
$$
\| (\lambda-L)^{-1} Q\| \le C\,\,,\quad (\Re\,\lambda \geq -\sigma)\,.
$$
Since $Q$ commutes with $L$, 
this establishes the hypotheses of Pr\"uss' theorem on the range of $Q$.

Applying Theorem~\ref{thm:Pruss} with $B$ equal
to the restriction of $L$ to the range of $Q$,
we obtain that 
$$\| e^{tL}Q\|\le Ce^{-\sigma t}$$
for some constant $C>0$. Since $L$ commutes with 
$e^{tL}$, it follows that
$$
\|Le^{tL}Qu\| + \|e^{tL}Qu\|\le Ce^{-\sigma t}
\bigl(\|Lu\|+\|u\|\bigr)
\qquad (u\in H^{2,1})\,.
$$
Since the $H^{2,1}$-norm is equivalent to the graph
norm of $L$ by Lemma~\ref{lem:L-domain}, Eq.~\eqref{eq:LQ-decay} follows.
This completes the proof of Proposition~\ref{prop:LQ-decay}.
\hfill $\Box$

\section{Nonlinear stability on the standard cylinder}
\label{sec:nonli-stab}

In this section, we return to the nonlinear system
on $\cS_R$ in the moving frame, 
and prove Theorem~\ref{thm:nonli-stab}.
Let $G(u)$ denote the right hand side of Eq.~\eqref{eq:FHN_moving}. 
By Proposition~\ref{prop:LWP},
the initial-value problem 
\begin{align}\label{FHN-abstr-form}
\p_t u = G(u)\,, \qquad u \vert_{t=0} = u_0\,,
\end{align}
is locally well-posed in the class of mild 
solutions on $H^{2,1}$.

\subsection{Decomposition of the solution near $\cM$}
\label{sec:split}

Let  $P$ be the projection to the tangent space of
$\cM$ at $\Phi$ from Lemma~\ref{lem:P}.
By translation invariance,
$$%
P_hv := \langle v, \tau^*_h \rangle\, \tau_h\,,\quad v\in L^2
$$
defines the corresponding projection
to the tangent space of $\cM$ at the translated
pulse $\Phi_h$. This is the spectral projection
associated with the zero eigenspace of
$L_h := dG(\Phi_h)$.

\begin{proposition}
\label{prop:split}
Under the assumptions of Proposition~\ref{prop:LQ-decay}:
\begin{enumerate}[(i)]
\item {\em (Projection onto $\cM$.)}
There exists a tubular neighborhood $\cW$ of 
$\cM$ in $H^{2,1}$ such that every
$u\in \cW$ has a unique decomposition as
\begin{equation}\label{eq:split-M}
u = \Phi_h + v \quad  \text{with} \; P_hv=0\,.
\end{equation}
\item {\em (Local projection near $\Phi$.)}
There exists a neighborhood
$\cU$ of $\Phi$ in $H^{2,1}$ 
such that every $u\in \cU$ has a unique decomposition
\begin{equation}\label{eq:split-local}
u = \Phi_h + v \quad \text{with} \; Pv=0\,.
\end{equation}
\end{enumerate}
In both cases, $h$ and $v$ are smooth functions of $u$.
\end{proposition}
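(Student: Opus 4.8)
The plan is to solve for the modulation parameter $h=h(u)$ by the implicit function theorem, after rewriting the (a priori $h$-dependent) constraints $P_hv=0$ and $Pv=0$ as scalar equations. Recall from Lemma~\ref{lem:P} and translation invariance that $P_hv=\langle v,\tau_h^*\rangle\,\tau_h$ with $\tau_h=\tau(\cdot-h)$, $\tau_h^*=\tau^*(\cdot-h)$, and $\langle\tau_h,\tau_h^*\rangle=1$; in particular $P_hv=0$ iff $\langle v,\tau_h^*\rangle=0$. So for part (i) I would introduce the scalar function
$$
g(u,h):=\langle u-\Phi_h,\tau_h^*\rangle\,,\qquad (u,h)\in H^{2,1}\times\R\,,
$$
and for part (ii) the function $\tilde g(u,h):=\langle u-\Phi_h,\tau^*\rangle$, where now the projection $P$ is fixed.

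The first point to check is that $g$ (and $\tilde g$) are $C^\infty$ on $H^{2,1}\times\R$. Since $\Phi$ is a smooth, exponentially decaying pulse, it lies in the domain of every power of the generator $\p_z$ of the translation group on $H^{2,1}$, so $h\mapsto\Phi_h$ is $C^\infty$ from $\R$ into $H^{2,1}$; the same holds for $h\mapsto\tau_h=\tau(\cdot-h)$ and $h\mapsto\tau_h^*$, once one knows that $\tau=-\p_z\Phi$ and the adjoint eigenfunction $\tau^*$ (with all their derivatives) lie in $H^{2,1}$ — and $\tau^*$ is smooth and exponentially decaying by elliptic/ODE regularity for $L^*\tau^*=0$ together with the hyperbolic structure of $\bar L^*$ as $z\to\pm\infty$. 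Because the weighted inner product of Eq.~\eqref{eq:eps-inner} is a bounded bilinear form on $L^2\times L^2$ and $H^{2,1}\hookrightarrow L^2$, the functions $g,\tilde g$ are then $C^\infty$, being compositions of this pairing with the smooth maps above and with the (linear) inclusion $u\mapsto u$.

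Next I would evaluate at the base point and compute the $h$-derivative. For (i), fix $h_0$; then $g(\Phi_{h_0},h_0)=0$, and since $\p_h\Phi_h=\tau_h$ and $u-\Phi_{h_0}=0$ at the base point,
$$
\p_hg(\Phi_{h_0},h_0)=\langle-\p_h\Phi_{h_0},\tau_{h_0}^*\rangle+\langle\underbrace{\Phi_{h_0}-\Phi_{h_0}}_{=0},\p_h\tau_{h_0}^*\rangle=-\langle\tau_{h_0},\tau_{h_0}^*\rangle=-1\neq 0\,.
$$
By the implicit function theorem there is a neighborhood $\cU_{h_0}$ of $\Phi_{h_0}$ in $H^{2,1}$ and a unique $C^\infty$ map $h\colon\cU_{h_0}\to\R$ with $h(\Phi_{h_0})=h_0$ and $g(u,h(u))\equiv0$; setting $v:=u-\Phi_{h(u)}$ yields the decomposition \eqref{eq:split-M} with $P_{h(u)}v=0$, smooth in $u$ and locally unique. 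The analogous computation for (ii) gives $\tilde g(\Phi,0)=0$ and $\p_h\tilde g(\Phi,0)=-\langle\tau,\tau^*\rangle=-1\neq0$, hence a neighborhood $\cU$ of $\Phi$ and a $C^\infty$ map $h$ with $h(\Phi)=0$ and $Pv=\langle v,\tau^*\rangle\tau=\tilde g(u,h(u))\,\tau=0$, proving \eqref{eq:split-local}.

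The remaining step in (i) — and the part I expect to require the most care — is globalizing the local decompositions into a genuine tubular neighborhood. Here translation invariance does the work: $\cU_{h_0}$ and the map $h$ are the $h_0$-translates of $\cU_0$ and of $h(\cdot)$, so the $\cU_{h_0}$ have a uniform "width" $\eta>0$; choosing $\cW:=\{u\in H^{2,1}\mid\dist(u,\cM)<\eta/2\}$, every $u\in\cW$ lies in some $\cU_{h_0}$, and the local maps agree on overlaps by uniqueness, so they patch to a single smooth decomposition on $\cW$, unique there. The only other nontrivial ingredient is the smoothness of the translation maps into $H^{2,1}$, which rests on the regularity and exponential decay of $\Phi$ and $\tau^*$; the invocation of the implicit function theorem itself is routine.
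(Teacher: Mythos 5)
Your proposal is correct and follows essentially the same route as the paper: both reduce the constraint $P_hv=0$ to the scalar equation $\langle u-\Phi_h,\tau_h^*\rangle=0$, solve it by the implicit function theorem using $|\p_h\langle u-\Phi_h,\tau_h^*\rangle|=\langle\tau,\tau^*\rangle=1$ at the base point, and then assemble the tubular neighborhood from translates of the local neighborhood by translation invariance (your explicit check that the local maps agree on overlaps is a detail the paper leaves implicit, and your sign $-1$ for the $h$-derivative is immaterial since only nonvanishing is needed).
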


In the proof, we show that
each of Eq.~\eqref{eq:split-M} and Eq.~\eqref{eq:split-local}
defines a pair of complementary non-linear projections 
$\mathcal{P}:u\mapsto \Phi_{h(u)}$ 
(onto $\cM$) and
and $\mathcal{Q}:u\mapsto v$ (onto a transversal subspace),
with $$ d\mathcal{P}\Big\vert_{u=\Phi}=P\,,
\qquad d\mathcal{Q}\Big\vert_{u=\Phi}=Q\,.
$$
In fact, Eq.~\eqref{eq:split-local}
defines a diffeomorphism $u\mapsto (h,v)$
from $\cU$ onto a neighborhood of the origin
in $\R\times {\rm Ran}\,(Q)$. The proof relies on the 
Implicit Function Theorem.
The following lemma provides the
requisite smoothness.

\begin{lemma}[Smooth dependence on $h$]
\label{lem:smooth}
The manifold of pulses $\cM$ is a smooth simple curve in $H^{2,1}$. 
Moreover, the tangent vector $\tau_h$, the
dual vector $\tau^*_h$, the projections $P_h$, $Q_h$,
the linearization $L_h=dG(\Phi_h)$,
and the nonlinearity $N_h(v):=
G(\Phi_h+v)-L_h v$
depend smoothly in $H^{2,1}$
on $h$, with bounded derivatives of all orders.
\end{lemma}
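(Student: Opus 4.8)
The plan is to verify each of the claimed smoothness statements by tracing them back to properties of the fixed pulse $\Phi$ and the explicit formulas already in hand. First I would establish that the translation group $h\mapsto\Phi_h$ is smooth as a map $\R\to H^{2,1}$ with bounded derivatives of all orders. Since $\Phi$ is a fast pulse, it is a smooth function of $z$ whose derivatives decay exponentially at $\pm\infty$ (this is standard for homoclinic orbits of the associated ODE, and in any case $\Phi\in H^{2,1}$ with all $z$-derivatives in $H^{2,1}$); hence $\partial_h^k\Phi_h=(-1)^k\partial_z^k\Phi_h$, and translation invariance of the $H^{2,1}$-norm on $\cS_R$ gives $\|\partial_h^k\Phi_h\|_{2,1}=\|\partial_z^k\Phi\|_{2,1}<\infty$ uniformly in $h$. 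Smoothness of $h\mapsto\Phi_h$ then follows because the difference quotients converge in $H^{2,1}$, using that $\partial_z$ maps the relevant higher Sobolev spaces boundedly and that $\Phi$ lies in all of them. This also shows $\cM$ is an injectively immersed (hence simple, by the explicit embedding into $\R^3$ and the fact that distinct translates are distinct functions) smooth curve.

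Next I would handle $\tau_h=-\partial_z\Phi_h=\partial_h\Phi_h$, which is smooth by the previous paragraph. For $\tau_h^*$, I would use translation covariance: $\tau_h^*=(\tau^*)_h$ where $\tau^*$ is the fixed adjoint eigenfunction from Lemma~\ref{lem:P}, since $L_h$ is the conjugate of $L$ by the translation $u\mapsto u_h$ and this is a unitary (isometric) intertwining on $L^2$ with the inner product of Eq.~\eqref{eq:eps-inner}. Thus $\tau_h^*$ inherits smoothness and uniform bounds from the translation-smoothness of the single vector $\tau^*$ — and one checks $\tau^*\in H^{2,1}$ with all $z$-derivatives in $H^{2,1}$, which follows from elliptic regularity for the eigenvalue equation $L^*\tau^*=0$ together with the exponential localization of the potential $f'(\phi_1)-f'(0)$. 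The projections $P_h v=\langle v,\tau_h^*\rangle\tau_h$ and $Q_h=1-P_h$ are then smooth bilinear-type expressions in the smooth vectors $\tau_h,\tau_h^*$, with bounded derivatives of all orders, because the pairing $\langle\cdot,\cdot\rangle$ is a bounded bilinear form on $L^2\supset H^{2,1}$.

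For $L_h=dG(\Phi_h)$, the only $h$-dependent entry is the multiplication operator by $f'(\phi_{1,h})$ in the $(1,1)$-slot; since $f'$ is a polynomial and $\phi_{1,h}$ depends smoothly on $h$ in every Sobolev space, and since $H^2(\cS_R)$ is a Banach algebra, the map $h\mapsto f'(\phi_{1,h})$ is smooth into the multipliers of $H^{2,1}$ with bounded derivatives of all orders; hence $h\mapsto L_h$ is smooth as a map into bounded operators from $H^{2,1}$ to $L^2$ (equivalently, the $h$-derivative of $L_h$ is a bounded operator on $H^{2,1}$, using Lemma~\ref{lem:L-domain}). Finally $N_h(v)=G(\Phi_h+v)-L_hv$; writing out $G$ one sees $N_h(v)$ is the cubic-and-quadratic expression $\bigl(-3\phi_{1,h}v_1^2-v_1^3+(\alpha+1)v_1^2,\,0\bigr)^{T}$ after cancellation of linear terms, so its $h$-dependence enters only through the coefficient $\phi_{1,h}$ multiplying $v_1^2$; smoothness in $h$ with bounded derivatives, locally uniformly in $v$, again follows from the Banach-algebra property of $H^2$ and the smoothness of $h\mapsto\phi_{1,h}$.

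The main obstacle is purely bookkeeping: one must be careful that "smooth with bounded derivatives of all orders" is asserted in the right topology — derivatives in $h$ of operator-valued maps are measured in the operator norm on $H^{2,1}$ (or from $H^{2,1}$ to $L^2$), and this requires knowing not just $\Phi\in H^{2,1}$ but that $\Phi$ and $\tau^*$ lie in the intersection of all the higher mixed Sobolev spaces with exponentially localized $z$-derivatives. I would isolate this as the one real input, citing the standard ODE/elliptic-regularity description of the fast pulse and of the adjoint eigenfunction, and then let everything else follow from translation invariance of the norms, boundedness of the pairing $\langle\cdot,\cdot\rangle$, and the Banach-algebra structure of $H^2(\cS_R)$; the composition and product rules for Fréchet-smooth maps then assemble the individual pieces into the stated conclusion.
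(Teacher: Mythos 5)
Your proposal is correct and follows essentially the same route as the paper's (much terser) proof: smoothness and decay of $\Phi$ and its derivatives give smoothness of $h\mapsto\Phi_h$, hence of $\tau_h$, $\tau_h^*$ and the projections, while $L_h$ and $N_h$ are handled as polynomial expressions with coefficients depending smoothly on $\Phi_h$ via the Banach-algebra property of $H^2(\cS_R)$. Your explicit use of translation covariance ($\tau_h^*=(\tau^*)_h$, with regularity of $\tau^*$ from the adjoint eigenvalue equation) merely spells out what the paper leaves implicit.
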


\begin{proof} The smoothness of $\cM$ follows from
the smoothness and decay of $\Phi$ and its derivatives.
This also proves the smoothness of
$\tau_h$, $\tau^*_h$, and the projections.
The linearization $L_h$ is a matrix-valued
differential operator whose coefficients are smooth functions
of $\Phi_h$; the linearity $N_h(v)=G(\Phi_h+v)-L_h(\Phi_h)v$
is a cubic polynomial
in $v_1$ whose coefficients are smooth functions
of $\Phi_h$.
Since $H^2(\cS_R)$ is a Banach algebra, 
$G(\Phi_h+v)$, $L_hv$, and $N_h(v)$ all depend smoothly on
$h$.
\end{proof}

\begin{proof}[Proof of Proposition~\ref{prop:split}]
{\em (i)}.\quad 
Given $u$ near $\cM$, we need
to find $h\in \R$ such that $P_h(u-\Phi_h)=0$. 
Choose $h_0\in\R$ with $\|u-\Phi_{h_0}\|= \dist(u,\cM)$.
By applying the translation $\tau_{-h_0}$
to a neighborhod of $u$, we may assume that $h_0=0$.
Thus we need to solve 
$$
\cH(u, h) := \langle u-\Phi_h, \tau^*_h\rangle = 0
$$
near $(\Phi,0)$.  Clearly, $\cH(\Phi,0)=0$. Moreover, since
$\Phi_h$ and $ \tau^*_h$ are smooth in $h$, the map 
$\cH$ is continuously differentiable in $h\in \R$ and $u\in L^2$.
Since $\partial_z \Phi=-\tau$, 
$$
\p_h \cH(u, h)\big|_{u=\Phi} = \langle \tau, \tau^*\rangle = 1.
$$
By the Implicit Function Theorem, there is a unique solution 
$h=h(u)$ in a neighborhood $\cU$ of $\Phi$, 
which is continuously differentiable in $u$
and satisfies $h(0)=0$.
Since $\Phi$ is smooth, also
$v(u) = u-\Phi_{h(u)}$ is smooth. The tubular neighborhood
$\cW$ is the union of all translates of $\cU$.

{\em (ii)} Apply the Implicit Function Theorem to 
$\cH(u, h) := \langle u-\Phi_h, \tau^*\rangle$.
\end{proof}

Fix a pulse $\Phi\in\cM$, and let
$\cU$ be the neighborhood constructed in 
the second part of Proposition~\ref{prop:split}. 
Consider a mild solution $u$ of Eq.~\eqref{eq:FHN_moving} 
on $\cU$.
By Eq.~\eqref{eq:split-local}, we can represent it uniquely as 
the superposition of a modulated pulse $\Phi_{h(t)}$
and a transversal fluctuation $v(t)$
\begin{equation}
\label{eq:split}
u(t)=\Phi_{h(t)}+ v(t)\quad \text{with}\; \quad Pv(t)=0\,.
\end{equation}

Since $\Phi_h$ is a stationary solution
of Eq.~\eqref{eq:FHN_static}, we have
$G(\Phi_h)=0$. Its Taylor expansion about $\Phi_h$
is given by $G(\Phi_h+v)= L_h v + N_h(v)$,
where $L_h=dG(\Phi_h)$ is as in Eq.~\eqref{eq:L}
but with $\Phi_h$ in place of $\Phi$, and
\begin{equation} \label{eq:Nh}
N_h(v) =
\begin{pmatrix}
v_1^2(\alpha +1-3(\phi_h)_1-v_1)\\ 0 
\end{pmatrix}\,.
\end{equation}
Note that $N_h$ differs from the nonlinearity $N$ in
Eq.~\eqref{eq:N} by a bounded multiplication operator that
decays as $z\to\pm \infty$.

Assume for the moment that $u$ is a classical solution of
Eq.~\eqref{eq:FHN_moving}.
Substituting Eq.~\eqref{eq:split} into Eq.~\eqref{FHN-abstr-form}
and using that $\p_t \Phi_h(z) = \dot h \tau_h$, we obtain
$$
\dot{h}(t) \tau_h  + \p_t v = G(\Phi_{h(t)} + v)
= L_h v + N_h(v)\,.
$$
We next apply the spectral projections $P$ and $Q$.  
Since $P\partial_t v=0$, we have by the chain rule
\begin{equation}
\label{eq:h}
\langle \tau_h,\tau^*\rangle \dot h =
\langle L_h v + N_h(v),\tau^*\rangle \,.
\end{equation}
The complementary projection yields
$$
\partial_t v = Q\bigl(L_hv + N_h(v)-\dot h\tau_h\bigr)\,.
$$
In general, if $u$ is a mild solution of Eq.~\eqref{eq:FHN_static}, 
we interpret $v$ as a mild solution of the equation 
\begin{equation}
\label{eq:Qv}
v(t) = e^{tL}v_0 +\int_0^t e^{(t-s)L} Q
\bigl( L_{h(s)}v(s) + N_{h(s)}(v(s))  -\dot h(s) \tau_{h(s)}\bigr)
\, ds\,.
\end{equation}

By the same argument as in Eq.~\eqref{eq:Lip-N},
the nonlinearity $N_h$ is locally Lipschitz on $H^{2,1}$.
We will need the following refined estimate
that takes advantage of the fact that
$N_h$ vanishes quadratically at $v=0$.

\begin{lemma}[Small Lipschitz estimate]
\label{lem:Nh} For any $\eta>0$
there exists a constant $C_\eta>0$ 
such that the nonlinearity $N_h(v)$ 
defined in Eq.~\eqref{eq:Nh} satisfies
\begin{equation}
\label{eq:Lipschitz}
\| N_h(v) - N_h(w) \|_{2,1}  \leq C_\eta
\max\bigl\{\|v_1\|_{H^2},\|w_1\|_{H^2} \bigr\}
\| v_1 - w_1 \|_{H^2}
\end{equation}
for all $v,w$ with $\|v_1\|_{H^2},\|w_1\|_{H^2}\le \eta$
and all $h\in\R$.
\end{lemma}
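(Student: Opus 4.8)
The plan is to exploit the explicit form of $N_h(v)$ in Eq.~\eqref{eq:Nh}, namely that its first component is the cubic polynomial $p_h(v_1):=v_1^2(\alpha+1-3(\phi_h)_1-v_1)$ with coefficients bounded uniformly in $h$ (because $\phi_1$ and all its translates are bounded by $\sup|\phi_1|$), and its second component vanishes. So it suffices to estimate $\|p_h(v_1)-p_h(w_1)\|_{H^2}$ from above by $C_\eta\max\{\|v_1\|_{H^2},\|w_1\|_{H^2}\}\,\|v_1-w_1\|_{H^2}$; the $\|\cdot\|_{2,1}$-norm of a two-component function whose second slot is zero and whose first slot lies in $H^2$ is controlled by the $H^2$-norm of the first slot. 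First I would write the difference $p_h(v_1)-p_h(w_1)$ as $(v_1-w_1)$ times a polynomial in $v_1,w_1,(\phi_h)_1$ of degree at most two in $(v_1,w_1)$ with uniformly bounded coefficients — concretely, $p_h(v_1)-p_h(w_1) = (v_1-w_1)\bigl[(\alpha+1-3(\phi_h)_1)(v_1+w_1) - (v_1^2+v_1w_1+w_1^2)\bigr]$, using $a^2b-ab^2$-type factorizations for the quadratic and cubic monomials separately.

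Next I would invoke the Banach algebra property of $H^2(\cS_R)$ (the cylindrical surface is two-dimensional), already cited in the paper right before Eq.~\eqref{eq:Lip-N}: there is a constant $c_A$ with $\|fg\|_{H^2}\le c_A\|f\|_{H^2}\|g\|_{H^2}$. Applying this to the factored expression, and using that $(\phi_h)_1$ lies in $H^2$ with $H^2$-norm bounded uniformly in $h$ (again by smoothness, boundedness and decay of $\Phi$ and its derivatives, as in Lemma~\ref{lem:smooth}), every term in the bracket has $H^2$-norm bounded by a constant times $\max\{\|v_1\|_{H^2},\|w_1\|_{H^2}\}$ on the region $\|v_1\|_{H^2},\|w_1\|_{H^2}\le\eta$ — the linear-in-$(v_1,w_1)$ terms directly, the quadratic terms after one application of the algebra bound absorbing one factor of $\eta$ (hence one power of $\max\{\|v_1\|,\|w_1\|\}$ up to the constant $C_\eta$). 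Multiplying by the factor $\|v_1-w_1\|_{H^2}$ via the algebra property once more yields Eq.~\eqref{eq:Lipschitz}, with $C_\eta$ depending on $\eta$, on $c_A$, on $\alpha$, and on $\sup_h\|(\phi_h)_1\|_{H^2}=\|\phi_1\|_{H^2}$ (hence only on $\alpha,\gamma,\eps$ and $\eta$, as claimed).

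I do not expect a genuine obstacle here: the only mild care needed is bookkeeping to make sure the constant is uniform in $h$, which follows because translation is an isometry of $H^2(\cS_R)$ so $\|(\phi_h)_1\|_{H^2}$ is independent of $h$. If one wanted to avoid the algebra bound one could alternatively estimate $\|p_h(v_1)-p_h(w_1)\|_{H^2}$ by differentiating twice and using Sobolev embedding $H^2(\cS_R)\hookrightarrow L^\infty$ together with $H^1\hookrightarrow L^4$ to handle the products of lower derivatives, but the Banach algebra route is cleaner and is exactly the tool the authors have already set up.
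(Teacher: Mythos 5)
Your proposal is correct and follows essentially the same route as the paper: the authors use exactly the factorization $(N_h(v))_1-(N_h(w))_1 = \bigl((\alpha+1-3(\phi_h)_1)(v_1+w_1)-(v_1^2+v_1w_1+w_1^2)\bigr)(v_1-w_1)$ and then appeal to the Banach algebra property of $H^2(\cS_R)$ together with $\phi_1\in H^2$. Your additional remarks on uniformity in $h$ via translation invariance are correct and merely make explicit what the paper leaves implicit.
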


\begin{proof} We expand the first component of $N_h$ as
$$
(N_h(v))_1-(N_h(w))_1 = 
\bigl((\alpha+1-3(\phi_h)_1)(v_1+w_1)-(v_1^2+v_1w_1+w_1^2)\bigr)(v_1-w_1)\,.
$$
Eq.~\eqref{eq:Lipschitz} follows directly from the continuity
of the multiplication in $H^2$ and the fact that
$\phi_1\in H^2$.  
\end{proof}

\begin{lemma}[Evolution inequalities]
\label{lem:hv-est} 
With the notation and assumptions of Proposition~\ref{prop:LQ-decay}, 
suppose $h(t)$ and $v(t)$ satisfy 
Eqs.~\eqref{eq:h}-\eqref{eq:Qv} 
on some interval $[0,T]$, and that
$$
|h(t)|\le \kappa\,, \qquad \|v(t)\|_{2,1}\le \eta
$$ 
for all $0\le t\le T$, where $\kappa>0$ is sufficiently
small, and $\eta>0$. 
Then there exists a constant $C>0$ 
(depending on $\kappa$ and $\eta$)
such that
\begin{equation}\label{eq:h-est}
\big| \dot{h}\big| \leq C  \bigl(|h|+\|v\|_{2,1}\bigr)\|v\|_{2,1}\,,
\qquad (0\le t\le T)\,,
\end{equation}
and
\begin{equation}
\label{eq:v-est}
\|v(t)\|_{2,1}\le 
C_0e^{-\sigma t}\|v_0\|_{2,1} \!+ C\int_0^t e^{ -\sigma (t-s)} 
\bigl(|h(t)|+\|v(t)\|_{2,1}\bigr)\|v(t) \|_{2,1}\, ds\,.
\end{equation}
\end{lemma}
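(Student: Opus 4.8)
The plan is to derive both inequalities directly from the evolution equations \eqref{eq:h} and \eqref{eq:Qv}, using the Lipschitz-type bounds from Lemma~\ref{lem:Nh} together with the smoothness of the data from Lemma~\ref{lem:smooth} and the decay estimate of Proposition~\ref{prop:LQ-decay}. The key observation that makes everything work is that $L_h v$ behaves like a ``small'' linear term when $v$ is transversal to $\cM$ at $\Phi$ (i.e. $Pv=0$), even though $L_h$ itself is not small. Indeed, since $L\tau=0$ and $Pv=0$ we can write $L_h v = (L_h-L)v + Lv$, and $Lv = LQv$ which is controlled by the decay semigroup machinery; meanwhile $(L_h-L)v$ is a bounded multiplication operator with coefficient $3((\phi_h)_1-\phi_1)$, which is $O(|h|)$ in the relevant norms by the smoothness in Lemma~\ref{lem:smooth}. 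Similarly $N_h(v)$ is quadratic in $v$ by Lemma~\ref{lem:Nh}, hence of order $\|v\|_{2,1}^2$, and $\tau_h-\tau = O(|h|)$.

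For the bound \eqref{eq:h-est} on $\dot h$: start from \eqref{eq:h}. The prefactor $\langle\tau_h,\tau^*\rangle$ equals $1$ at $h=0$ and depends smoothly on $h$, so for $\kappa$ small enough it is bounded away from zero and $\dot h$ is comparable to $\langle L_h v + N_h(v),\tau^*\rangle$. For the $N_h(v)$ term, apply Lemma~\ref{lem:Nh} with $w=0$ to get $\|N_h(v)\|_{2,1}\lesssim\|v\|_{2,1}^2$, so $|\langle N_h(v),\tau^*\rangle|\lesssim\|v\|_{2,1}^2$. For the $L_h v$ term, write $\langle L_h v,\tau^*\rangle = \langle (L_h-L)v,\tau^*\rangle + \langle Lv,\tau^*\rangle$. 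The second term vanishes: $\langle Lv,\tau^*\rangle=\langle v,L^*\tau^*\rangle=0$ since $\tau^*$ is the null eigenfunction of $L^*$. The first term is bounded by $\|(L_h-L)v\|\,\|\tau^*\|\lesssim |h|\,\|v_1\|_{H^2}\lesssim |h|\,\|v\|_{2,1}$, using that $L_h-L$ is multiplication by a function of size $O(|h|)$ in $L^\infty$ (again smoothness in $h$, valuated against smooth decaying $\phi$). Summing, $|\dot h|\lesssim(|h|+\|v\|_{2,1})\|v\|_{2,1}$, which is \eqref{eq:h-est}.

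For the bound \eqref{eq:v-est} on $v$: apply the triangle inequality to the Duhamel formula \eqref{eq:Qv}. The leading term $\|e^{tL}v_0\|_{2,1}\le C_0 e^{-\sigma t}\|v_0\|_{2,1}$ comes directly from Proposition~\ref{prop:LQ-decay}, using that $v_0=Qv_0$ so $e^{tL}v_0 = e^{tL}Qv_0$. Inside the integral, $\|e^{(t-s)L}Q\|_{2,1}\le Ce^{-\sigma(t-s)}$ by the same proposition, so it remains to bound $\|L_{h(s)}v(s)+N_{h(s)}(v(s))-\dot h(s)\tau_{h(s)}\|_{2,1}$ by $(|h(s)|+\|v(s)\|_{2,1})\|v(s)\|_{2,1}$ up to a constant. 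The $N_h(v)$ term is $\lesssim\|v\|_{2,1}^2$ as above. For $L_h v$, again split as $(L_h-L)v + Lv$: the first piece is $\lesssim|h|\,\|v\|_{2,1}$; for the second, note $e^{(t-s)L}QLv = L e^{(t-s)L}Qv$ (commutation) and bound this differently — actually it is cleaner to absorb $Lv$ by writing the whole $e^{(t-s)L}Q L_h v$ contribution and using that on $\mathrm{Ran}\,Q$ the operator $L$ generates the exponentially decaying semigroup, so $\|e^{(t-s)L}Q L v\|_{2,1}=\|L e^{(t-s)L}Q v\|_{2,1}\lesssim e^{-\sigma(t-s)}\|v\|_{2,1}$ — but this gives a term of order $\|v\|_{2,1}$, not $\|v\|_{2,1}^2$. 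This is the main subtlety: the naive estimate does not immediately close. The resolution is that the $Lv$ part should not be treated as a source term at all but folded back into the semigroup: writing $v=Qv$ and $e^{tL}v_0 + \int_0^t e^{(t-s)L}Q\,L_{h(s)}v(s)\,ds$, one recognizes that the ``$L v$'' piece of $L_{h(s)}v(s)$ combined with $e^{tL}v_0$ is exactly the mild form of $\partial_t v = Lv + (\text{small terms})$, so effectively the semigroup $e^{tL}Q$ already accounts for it and only $(L_{h(s)}-L)v(s)$, $N_{h(s)}(v(s))$, and $\dot h(s)\tau_{h(s)}$ remain as genuine perturbations. Each of these is $O((|h|+\|v\|_{2,1})\|v\|_{2,1})$: the first by the $O(|h|)$ coefficient bound, the second by Lemma~\ref{lem:Nh}, the third by \eqref{eq:h-est} just proved together with $\|\tau_h\|_{2,1}\lesssim 1$. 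This yields \eqref{eq:v-est}. I expect the bookkeeping around the $(L_h-L)v$ term — verifying that the multiplication operator $3((\phi_h)_1-\phi_1)$ has $H^{2,1}\to H^{2,1}$ operator norm $O(|h|)$, uniformly for $|h|\le\kappa$ — to be the one place requiring genuine care, since it uses both the Banach algebra property of $H^2(\cS_R)$ and the uniform smoothness/decay of $\Phi$ from Lemma~\ref{lem:smooth}.
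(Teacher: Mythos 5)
Your proof is correct and follows essentially the same route as the paper: you bound $\langle L_h v,\tau^*\rangle$ by exploiting $L^*\tau^*=0$ so that only $(L_h-L)$ contributes (an $O(|h|)$ multiplication operator by Lemma~\ref{lem:smooth}), use Lemma~\ref{lem:Nh} for the quadratic nonlinearity and Eq.~\eqref{eq:h-est} for the $\dot h\,\tau_h$ term, and in the Duhamel formula treat only $(L_h-L)v$, $N_h(v)$ and $\dot h\,\tau_h$ as genuine sources, the $Lv$ part being absorbed into the semigroup $e^{tL}Q$ via $QL_hv=LQv+Q(L_h-L)v$. The ``subtlety'' you flag is real and correctly resolved --- the integrand of Eq.~\eqref{eq:Qv} must indeed be read with $(L_{h(s)}-L)v(s)$ in place of $L_{h(s)}v(s)$ for the estimate to close --- and this is precisely what the paper's (rather terse) proof does when it invokes the bound $\|L_h-L\|_{2,1}=O(|h|)$.
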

\begin{proof} 
Choose $\kappa>0$ such that $\langle \tau_h,\tau^*\rangle\ge \frac12$ 
for all $h$ with $|h|\le \kappa$.
This is possible because $\langle \tau,\tau^*\rangle=1$, 
and $\langle\tau_h,\tau^*\rangle$ depends
smoothly on $h$ by Lemma~\ref{lem:smooth}. 
For $|h|\le \kappa$, Eq.~\eqref{eq:h} yields
$$
|\dot h|\le 2\bigl(|\langle L_h v, \tau^*\rangle|+ 
|\langle N_h(v),\tau^*\rangle|\bigr)\,.
$$
Since $L^*v=0$, the first summand is bounded by
$$
|\langle L_h v, \tau^*\rangle|
= |\langle v, (L^*_h-L^*)\tau^*\rangle |
\le C|h|\, \|v\|_{2,1}
$$
for some constant $C$.
By Lemma~\ref{lem:Nh}, the second summand satisfies
$\| N_h(v) \|_{2,1} \leq C_\eta \|v\|_{2,1}^2$.
Combining these two inequalities yields the bound on $|\dot h|$.

For $v(t)$, we separately estimate each term 
on the right hand side of Eq.~\eqref{eq:Qv}.
In the integrand, we use that $Lv=0$ and
$\|L_h-L\|_{2,1}$ is of order $|h|$ by Lemma~\ref{lem:smooth}.
For the nonlinearity, we use
Lemma~\ref{lem:Nh}, and for the last term, we use
Eq.~\eqref{eq:h-est}. The result is
\begin{align*}
\|v(t)\|_{2,1}
\le \|e^{tL}Q\|_{2,1}\|v_0\|_{2,1}+ C \int_0^t \| e^{-(t-s)L} Q\|_{2,1}
\bigl(|h(s)| + \|v(s)\|_{2,1}\bigr)\|v(s)\|_{2,1}\, ds\,.
\end{align*}
The proof is completed with Proposition~\ref{prop:LQ-decay}.
\end{proof}

We end this subsection with a differential inequality
that will be used below.

\begin{lemma}[Estimates on $h$]
\label{lem:h-ineq}
Let $y$ be a nonnegative, nondecreasing 
function on $[0,t]$, and let $C,\xi, \eta$ be nonnegative
constants.  If $h$ satisfies the differential inequality
$$
|\dot h| \le Ce^{-\xi t}y(|h|+y)\,
\qquad (0\le t\le T)
$$
with $h(0)=0$, and $y(T)\le\eta$, then
$$|h(t)| \le C_1 y(t)\,,
\quad |\dot h(t)|\le C_2 
e^{-\xi t} y(t)^2\, \qquad (0\le t\le T)\,,
$$
where $C_1=(e^{\frac {C\eta}{\xi}}-1)$, 
$C_2= Ce^{\frac {C\eta}{\xi}}$.
\end{lemma}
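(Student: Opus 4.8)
The plan is a Grönwall argument in which we freeze the argument of $y$ at the right endpoint and exploit monotonicity. Fix $t\in[0,T]$. Since $h(0)=0$ and $h$ is absolutely continuous, $|h(s)|\le\int_0^s|\dot h(r)|\,dr$ for all $0\le s\le t$. Inserting the hypothesized differential inequality and using that $y$ is nondecreasing, so that $y(r)\le y(t)$ whenever $r\le t$, one obtains
\[
|h(s)|\le C\,y(t)\int_0^s e^{-\xi r}\bigl(|h(r)|+y(t)\bigr)\,dr\,,\qquad 0\le s\le t\,.
\]

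Next I would set $w(s):=|h(s)|+y(t)$, a continuous nonnegative function, for which the previous line reads $w(s)\le y(t)+C\,y(t)\int_0^s e^{-\xi r}w(r)\,dr$. This is a linear homogeneous integral inequality, so Grönwall's inequality gives $w(s)\le y(t)\exp\bigl(C\,y(t)\int_0^s e^{-\xi r}\,dr\bigr)$. Bounding $\int_0^s e^{-\xi r}\,dr\le \xi^{-1}$ and using $y(t)\le y(T)\le\eta$, this becomes $w(s)\le y(t)\,e^{C\eta/\xi}$, hence $|h(s)|\le y(t)\bigl(e^{C\eta/\xi}-1\bigr)$. Taking $s=t$ yields $|h(t)|\le C_1 y(t)$ with $C_1=e^{C\eta/\xi}-1$. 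Feeding this back into the hypothesis gives
\[
|\dot h(t)|\le C e^{-\xi t}y(t)\bigl(|h(t)|+y(t)\bigr)\le C(C_1+1)e^{-\xi t}y(t)^2=C_2 e^{-\xi t}y(t)^2\,,
\]
with $C_2=C e^{C\eta/\xi}$, as claimed.

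I do not expect a genuine obstacle: this is a textbook Grönwall estimate. The one point requiring a little care is the substitution step — replacing the inner $y(r)$ by $y(t)$ (legitimate because $y$ is nondecreasing and $r\le t$) is exactly what converts the seemingly quadratic inequality into a linear homogeneous one and produces the sharp constant $e^{C\eta/\xi}-1$ rather than a weaker bound of the form $\mathrm{const}\cdot(C\eta/\xi)\,y(t)$. When $y(t)=0$ the inequality forces $\dot h\equiv0$ on $[0,t]$, so $h\equiv0$ there and both conclusions are immediate; and throughout we use $\xi>0$, as is implicit in the statement.
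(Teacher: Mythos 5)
Your proof is correct and takes essentially the same route as the paper's: the paper separates variables in the differential inequality $\tfrac{d}{ds}\bigl(|h(s)|+y(t)\bigr)\le C\,y(t)e^{-\xi s}\bigl(|h(s)|+y(t)\bigr)$, which is exactly the differential form of the Gr\"onwall step you perform in integral form, and both arguments produce the same constants $C_1=e^{C\eta/\xi}-1$ and $C_2=Ce^{C\eta/\xi}$. Your explicit treatment of the degenerate case $y(t)=0$ is a minor point of extra care that the paper's logarithmic formula glosses over, but it changes nothing of substance.
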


\begin{proof} Fix $t_0\in (0, T]$. 
Since $y$ is non-decreasing, 
$|\dot h(s)| \le C y(t)(|h(t)|+y(t))$ for all $0\le s\le t$.
We separate variables and integrate from
$h(0)=0$ to obtain
$$
\log \Bigl( \frac{|h(s)|+y(t)}{y(t)}\Bigr)
\le \frac{C y(t)}{\xi}(1-e^{-\xi t})\le \frac{C\eta}{\xi}\,
\qquad (0\le s\le t)\,,
$$
which yields the first claim after solving for $h(s)$ and
setting $s=t$.
The second claim follows by substituting this
bound back into the differential inequality.
\end{proof}

\subsection{Proof of Theorem~\ref{thm:nonli-stab}}
\label{sec:stab-proof}

Let $L$ be the linearization about $\Phi$ in the
moving frame, defined in Eq.~\eqref{eq:L},
and $\xi\in (0,\sigma)$.
We will construct a neighborhood $\cU$ of
$\Phi$ in $H^{2,1}$ such that for every solution $u(t)$
with initial value $u_0\in \cU$ there is a real-valued function
$h(t)$ such that
$$
\|u(t)-\Phi_{h(t)}\| \lesssim e^{-\xi t}\|u_0-\Phi\|_{2,1}\,
\qquad (t\ge 0)
$$
where $ |h(0)|\lesssim \|u_0-\Phi\|_{2,1}$, and there exists
$h_*\in\R$ such that
$$
|h(t)-h_*| \lesssim e^{-\xi t}\|u_0-\Phi\|_{2,1}^2
\qquad (t\ge 0)\,.
$$
Transforming back to the static frame, this will prove the theorem.

By Eq.~\eqref{eq:split-M} of Proposition~\ref{prop:split},
there is a neighborhood $\cU$ of $\Phi$ such that
each $u_0\in\cU$ can be written uniquely
as $u_0=\Phi_{h_0} + v_0$, 
where $P_{h_0}v=0$.  Since $h_0$ depends smoothly on $u$, 
$$|h_0|\lesssim \|u_0-\Phi\|_{2,1}\,,
\quad \|v_0\|_{2,1}= \|u_0-\Phi_{h_0}\|_{2,1}
\lesssim \|u_0-\Phi\|_{2,1}\,.
$$
Replacing $u_0$ with its translate $(u_0)_{-h_0}$, 
we may assume that $h_0=0$, that is, 
$$
u_0=\Phi+v_0\,,\quad Pv_0=0\,.
$$
By the second part of Proposition~\ref{prop:split}, 
the solution of Eq.~\eqref{eq:FHN_moving}
with initial value $u_0$ can be written uniquely as 
$$
u(t)= \Phi_{h(t)} + v(t)\,,\qquad Pv(t)=0\,,
$$
so long as $u(t)\in \cU$.
The functions $h(t)$ and $v(t)$ satisfy
inequalities~\eqref{eq:h} and~\eqref{eq:Qv}
with initial values $h(0)=0$ and $v(0)=v_0$. 

Since the map $u\mapsto (h,v)$ is a diffeomorphism
from $\cU$ to a neighborhood of the origin
in $\R\times \Ran (Q)$, by replacing $\cU$
with a smaller neighborhood 
we may assume that it has the form 
$\cU=\left\{\Phi_h+v\ \big\vert \ 
(h,v)\in \R\times \Ran(Q), 
|h|<\kappa, \|v\|_{2,1}<\eta\right\}$,
where $\kappa$ is so small that $\langle \tau_h,\tau^*\rangle\ge 
\frac12$ whenever $|h|\le \kappa$. The value of $\eta>0$ will
be further specified below. 

Let $\sigma$ be the exponent from 
Proposition~\ref{prop:LQ-decay}, and let $C_0$ be
the multiplicative constant.
Choose $\xi\in (0,\sigma)$, 
define the monotonically increasing function
$$
y(t):=\sup_{0\le s\le t} e^{-\xi s}\|v(s)\|_{2,1}\,,
$$
and let 
$$
T:=\inf\left\{t\ge 0\ \big\vert\ 
|h(s)|\ge \kappa\ \text{or}\ \|y(s)\|_{2,1}\ge\eta \right\}\,.
$$ 

Assume that $\|v_0\|_{2,1}<\frac{\eta}{2C}$, and
apply Lemma~\ref{lem:hv-est}.  By Eq.~\eqref{eq:h-est},
$$
|\dot h(t)|\le C e^{-\xi t}\bigl(|h(t)| +y(t)\bigr)y(t)\,,\qquad
(0\le t\le T)\,,
$$
where we have used that $\|v(t)\|_{2,1}\le
e^{-\xi t} y(t)$ by definition of $y$.
It follows by Lemma~\ref{lem:h-ineq} that
$ h(t) \le C_\eta y(t)$ for $0\le t\le T$
for some constant $C_\eta$. Since $y(t)<\eta$
for $t<T$, by reducing the value of $\eta$ 
we can achieve that $|h(t)|<\kappa$ for all $t\in [0,T)$.
Inserting this estimate into Eq.~\eqref{eq:v-est} yields
$$
y(t)\leq C_0 e^{-(\sigma-\xi)t}\| v_0 \|_{2,1} 
+ C \int_0^t e^{-(\sigma-\xi)(t-s)}
y^2(s)\, ds\,,
$$
where $C$ is the product of $C_0$, $C_\eta$, and
the constant from Lemma~\ref{lem:Nh}.
Since $y$ is nondecreasing, taking it out of the integral
yields the upper bound
\begin{align}\label{eq:y}
y(t) & \leq C_0\| v_0 \|_{2,1} + C y^2(t)\, 
\qquad (0\leq t \leq T)
\end{align}
with a suitably adjusted constant $C$.

Consider the quadratic polynomial
$p(y):=C_0\|v_0\|_{2,1}-y+ C y^2$. If
$d:=4C_0C \|v_0\|_{2,1}<1$, then $P$
has two positive real roots, and is positive
on the interval between them.
The smaller root satisfies
$$
y_*=\frac{1}{2C}
\Bigl(1-\sqrt{1-4C_0C\|v_0\|_{2,1}}\Bigr)
\le 2C\|v_0\|_{2,1}< \eta \,.
$$
Since $C_0\ge 1$, we have that $\|v_0\|_{2,1}<y_*$.
Eq.~\eqref{eq:y} implies, by continuity,
that $y(t)\le y_*$ for all $0\le t\le T$.
If $T<\infty$, then by continuity also $y(T)\le y_*<\eta$,
contradicting the definition of $T$.
Hence $T=+\infty$, and
$$
\|v(t)\|_{2,1}\le e^{-\xi t}y(t) \le 2C_0e^{-\xi t}\|v_0\|_{2,1}\,
\qquad (t\ge 0)\,.
$$
Since $|h(t)|\le \kappa$ and $\|v(t)\|<\eta$, we conclude that
$\Phi_{h(t)}+v(t)\in \cU$ for all $t\ge 0$,
and $\|v(t)\|_{2,1}$ converges exponentially to zero. 

To show that $h(t)$ converges as well,
we use again Lemma~\ref{lem:h-ineq} to see that
$$
|\dot h(t)| \le C e^{-\xi t}\|v_0\|_{2,1}^2\,.
$$
It follows that
$h(t)$ converges exponentially to a limit, $h_*$,
with $|h_*|\lesssim \|v_0\|_{2,1}^2$. 
Since $\|v_0\|_{2,1}\lesssim \|u_0-\Phi\|_{2,1}$,
this proves the estimate for $h$.
The proof of the theorem is completed by shrinking
the neighborhood once more, to
$$\cU=\left\{\Phi_h+v\ \big\vert \ 
(h,v)\in \R\times \Ran(Q), 
|h|<\kappa, \|v\|_{2,1}<\tfrac{\eta}{2C}\right\}\,\\[-0.7cm]
$$
\hfill $\Box$ 

\vspace{0.2cm}

\section{Near-pulse solutions on warped cylinders}
\label{sec:warped}

This section is dedicated to the proof of Theorem~\ref{thm:persist}.
Consider the FHNcyl system~\eqref{eq:FHN} 
on a warped cylinder $\cS_{\rho}$, given by Eq.~\eqref{eq:FHN_static}.

In the special case where $\rho\equiv R$, Eq.~\eqref{eq:FHN_static} 
equivalent to Eq.~\eqref{eq:FHN_moving}, 
expressed in the static frame.
The pulse defines a traveling wave solution  $\Phi(x-ct)$
on $\cS_R$.  As discussed in the introduction, the proof of
Theorem~\ref{thm:persist} relies on a perturbation 
estimate that controls the dependence of solutions 
on $\rho$.  The size of the perturbation
is measured in terms of the
essential parameter $\delta:=R^{-1}\|\rho-R\|_{C^2}$.

\begin{proposition} [Perturbation of the radius]
\label{prop:continuous-rho} 
Suppose that $u\in C([0,T], H^{2,1})$
is a mild solution of Eq.~\eqref{eq:FHN_static} on a
standard cylinder $\cS_R$,
with initial value $u_0:=u\big\vert_{t=0}$.
There are positive constants $\delta_*$ and $C$
(which depend on $T$ and on $\sup_{0\le t\le T}\|u(t)\|_{2,1}$)
such that if $0<R\le 1$
and $\delta:= R^{-1}\|\rho-R\|_{C^2}\le \delta_*$, then
the unique mild solution of Eq.~\eqref{eq:FHN_static}
on $\cS_\rho$ with
initial values $u_\rho\big\vert_{t=0}=u_0$ satisfies
$$
\sup_{0\le t\le T} \|u_\rho(t)-u(t)\|_{2,1}
\le C\delta\,.
$$
\end{proposition}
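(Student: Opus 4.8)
Both $u$ and the sought solution $u_\rho$ satisfy the Duhamel equation \eqref{eq:Duhamel-F}, with the \emph{same} initial value $u_0$ and the \emph{same} nonlinearity $N$, since by \eqref{eq:N} the cubic term acts only on the first component and does not depend on $\rho$. Subtracting, I would write the difference $D(t):=u_\rho(t)-u(t)$ as
\[
D(t)=E(t)+\int_0^t e^{(t-s)A_\rho}\big(N(u_\rho(s))-N(u(s))\big)\,ds,\qquad
E(t):=\big(e^{tA_\rho}-e^{tA_R}\big)u_0+\int_0^t\big(e^{(t-s)A_\rho}-e^{(t-s)A_R}\big)N(u(s))\,ds .
\]
The term $E$ is an inhomogeneity built solely from the \emph{given} reference solution $u$ and the data — equivalently, $E$ is the mild solution of $\partial_t E=A_\rho E+(A_\rho-A_R)u(t)$ with $E(0)=0$ — while the last integral is absorbed by a Gr\"onwall estimate using the local Lipschitz bound \eqref{eq:Lip-N} for $N$ together with the uniform bound $\|e^{tA_\rho}\|_{2,1}\le C_0$ of Proposition~\ref{prop:LWP}. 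Thus the proof splits into (a) the linear perturbation bound $\sup_{0\le t\le T}\|E(t)\|_{2,1}\le C\delta$, and (b) a continuation argument showing that $u_\rho$ does not leave a neighbourhood of $u$, so that it exists on all of $[0,T]$ once $\delta\le\delta_*$.

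\textbf{Comparison of the generators.}
By \eqref{eq:A}, $B:=A_\rho-A_R$ is the operator $\big(\begin{smallmatrix}\Delta_{\cS_\rho}-\Delta_{\cS_R}&0\\0&0\end{smallmatrix}\big)$; note that $A_\rho$ and $A_R$ share the domain $\{v:v_1\in H^2(\cS_R)\}$, which properly contains $H^{2,1}$. Writing $\rho=R(1+\omega)$, so that $\|\omega\|_{C^2}=\delta$, and Taylor-expanding the coefficients $\tfrac{1}{\sqrt g}$, $\tfrac{\rho}{\sqrt{1+\rho'^2}}$ and $\rho^{-2}$ in \eqref{eq:Delta-rho} about $\omega\equiv 0$ (using $R\le1$ to control $\rho'=R\omega'$ and $\rho''=R\omega''$), one finds that $\Delta_{\cS_\rho}-\Delta_{\cS_R}$ is a second-order operator whose coefficients are $O(\delta)$ relative to the corresponding coefficients of $\Delta_{\cS_R}$; with $\|R^{-2}\partial_\theta^2 v_1\|\le\|\Delta_{\cS_R}v_1\|$ and $\|\partial_x v_1\|\lesssim\|\partial_x^2 v_1\|+\|v_1\|$ (both read off the Fourier representation) this gives
\[
\|Bv\|\ \lesssim\ \delta\big(\|\Delta_{\cS_R}v_1\|+\|v_1\|\big)\ \lesssim\ \delta\,\|v\|_{2,1},\qquad v\in H^{2,1},
\]
with relative bound $<1$ once $\delta\le\delta_*$; the same estimate shows that the graph norms of $\Delta_{\cS_\rho}$ and $\Delta_{\cS_R}$ are uniformly equivalent for $\delta\le\delta_*$, so $A_\rho$ is sectorial in $L^2$, $e^{tA_\rho}$ is analytic there, and $\|\Delta_{\cS_\rho}e^{\tau A_\rho}\|_{L^2}\lesssim_T\tau^{-1}$.

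\textbf{The linear perturbation bound.}
The key step is
\[
\sup_{0\le t\le T}\big\|\big(e^{tA_\rho}-e^{tA_R}\big)w\big\|_{2,1}\ \le\ C(T)\,\delta\,\|w\|_{2,1},\qquad w\in H^{2,1}.
\]
Since $A_\rho$ and $A_R$ have the same domain, the variation-of-constants identity $(e^{tA_\rho}-e^{tA_R})w=\int_0^t e^{(t-s)A_\rho}B\,e^{sA_R}w\,ds$ holds, with $L^2$-continuous integrand satisfying $\|B e^{sA_R}w\|\lesssim\delta\|w\|_{2,1}$; the $L^2$-part of the claimed bound then follows at once from $\|e^{(t-s)A_\rho}\|_{L^2}\lesssim_T1$, and the $\|\partial_x(\cdot)_2\|$-part follows because the second component of $d:=(e^{tA_\rho}-e^{tA_R})w$ satisfies $d_2(t)=\eps\int_0^t e^{-\eps\gamma(t-s)}d_1(s)\,ds$, with $\|\partial_x d_1\|\lesssim\|\Delta_{\cS_R}d_1\|+\|d_1\|$. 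The delicate part is the $\|\Delta_{\cS_R}(\cdot)_1\|$-component: one cannot simply place the parabolic smoothing $\|\Delta_{\cS_\rho}e^{\tau A_\rho}\|_{L^2}\lesssim\tau^{-1}$ on the integrand, as $\tau^{-1}$ is not integrable. Instead I would pass to the contour representation $e^{tA_\rho}-e^{tA_R}=\tfrac{1}{2\pi i}\int_\Gamma e^{\lambda t}\,(\lambda-A_\rho)^{-1}B(\lambda-A_R)^{-1}\,d\lambda$ ($\Gamma$ a sectorial contour) and use that, since $w_1\in H^2$, one may commute $\Delta_{\cS_R}$ through $(\lambda-A_R)^{-1}$ to gain a factor $|\lambda|^{-1}$, yielding $\|\Delta_{\cS_R}\big((\lambda-A_\rho)^{-1}B(\lambda-A_R)^{-1}w\big)_1\|\lesssim\delta|\lambda|^{-1}\|w\|_{2,1}$, whence the contour integral converges; the short-time regime is then controlled by combining this with the trivial bound $\|(e^{tA_\rho}-e^{tA_R})w\|_{2,1}\lesssim\|w\|_{2,1}$ and, where needed, with the variation-of-constants formula and the extra regularity of $e^{sA_R}w$ for $s>0$. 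This is the step I expect to be the main obstacle, precisely because $H^{2,1}$ lies strictly between the common domain of $A_\rho,A_R$ and the space on which $e^{tA_\rho}$ gains regularity, so the perturbation integral is only borderline convergent.

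\textbf{Conclusion.}
Applying the linear bound with $w=u_0$ and with $w=N(u(s))$ — the latter lies in $H^{2,1}$ with $\|N(u(s))\|_{2,1}$ bounded in terms of $\sup_{[0,T]}\|u(t)\|_{2,1}$, since $H^2(\cS_R)$ is a Banach algebra — gives $\sup_{[0,T]}\|E(t)\|_{2,1}\le C\delta$. For (b), let $T_\rho$ be the maximal existence time of $u_\rho$ (which exists near $t=0$ by Proposition~\ref{prop:LWP}) and let $T^*\le\min(T,T_\rho)$ be maximal with $\sup_{[0,T^*]}\|D(t)\|_{2,1}\le 1$; on $[0,T^*]$ the $H^2$-norms of $u_1$ and $(u_\rho)_1$ are bounded by $1+\sup_{[0,T]}\|u(t)\|_{2,1}$, so \eqref{eq:Lip-N} applies and $\|D(t)\|_{2,1}\le\|E(t)\|_{2,1}+C_0C_\eta\int_0^t\|D(s)\|_{2,1}\,ds$, whence $\|D(t)\|_{2,1}\le C\delta e^{CT}$ on $[0,T^*]$ by Gr\"onwall. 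Choosing $\delta_*$ so that $C\delta_*e^{CT}<\tfrac12$ forces $T^*=\min(T,T_\rho)$, and the uniform local existence time of Proposition~\ref{prop:LWP}, applied along times approaching $T_\rho$ where $\|u_\rho\|_{2,1}$ stays bounded, rules out $T_\rho\le T$. Hence $u_\rho$ exists on $[0,T]$ and $\sup_{[0,T]}\|u_\rho(t)-u(t)\|_{2,1}\le C\delta$, with $C$ and $\delta_*$ depending only on $T$ and $\sup_{[0,T]}\|u(t)\|_{2,1}$.
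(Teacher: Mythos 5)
Your overall architecture coincides with the paper's: subtract the two Duhamel formulas~\eqref{eq:Duhamel-F}, treat $B=A_\rho-A_R$ as a relatively bounded perturbation of size $O(\delta)$ (this is the paper's Eq.~\eqref{eq:rho-R-4} and Lemma~\ref{lem:21-equivalent}), estimate the difference of the semigroups through the sectorial contour representation and the resolvent difference $(\lambda-A_\rho)^{-1}B(\lambda-A_R)^{-1}$ (Lemmas~\ref{lem:sectorial} and~\ref{lem:diff-res}), and close with the Lipschitz bound~\eqref{eq:Lip-N} and Gr\"onwall plus a continuation argument. All of those pieces are sound as you present them.

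The genuine gap is in the step you yourself flag as the main obstacle: the claimed uniform bound $\sup_{0\le t\le T}\|(e^{tA_\rho}-e^{tA_R})w\|_{2,1}\le C(T)\,\delta\,\|w\|_{2,1}$ is not delivered by your argument. The estimate you extract on the contour integrand, $\lesssim \delta\,|\lambda|^{-1}\|w\|_{2,1}$, is exactly the operator-norm bound of Lemma~\ref{lem:diff-res}; integrating $e^{t\Re\lambda}\min\{1,|\lambda|^{-1}\}$ over a sectorial contour produces $\delta\,(1+\log t^{-1})$, not $\delta$ --- the integral converges for each fixed $t>0$ but not uniformly as $t\to 0$. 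Gaining a genuinely extra power $|\lambda|^{-2}$ from $w_1\in H^2$ is not available: commuting $\Delta_{\cS_R}$ through $(\lambda-A_R)^{-1}$ only reproduces the factor $|\lambda|^{-1}$ that the resolvent already supplies, after which $B$ costs two derivatives and $(\lambda-A_\rho)^{-1}$ restores them with an $O(1)$ (not $O(|\lambda|^{-1})$) constant in the $L^2\to H^2$ norm. Your fallback, interpolating with the trivial bound $\|(e^{tA_\rho}-e^{tA_R})w\|_{2,1}\lesssim\|w\|_{2,1}$ for short times, loses the factor $\delta$ and so cannot rescue the uniform estimate. The repair is not a new idea but a weaker claim: the paper's Proposition~\ref{prop:difference} settles for $\|e^{tA_\rho}-e^{tA_R}\|_{2,1}\le C\delta(1+\log t^{-1})$, which is integrable in $t$ and therefore suffices for the Duhamel integral and the Gr\"onwall step exactly as in your scheme. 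If you adopt that logarithmic bound, the rest of your proof (the decomposition into $E(t)$ plus the Lipschitz integral, and the continuation argument ruling out $T_\rho\le T$) goes through essentially verbatim and matches the paper's proof.
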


The Riemannian structure
on $\cS_\rho$ induces an alternative inner product on $L^2(\cS_\rho)$,
\begin{equation}\label{eq:inner-rho}
\langle u,w \rangle_\rho :=\int_{\cS_{\rho}} 
(u_1\bar{w}_1+\eps^{-1}u_2\bar{w}_2)\, d\mu_\rho\,,
\end{equation}
where $d\mu_\rho = \sqrt{g}\, d\theta dx$ is the Riemannian
area element whose density is determined by $g= \rho^2(1+\rho'^2)$.
The corresponding norm will be denoted by $\| \cdot \|_\rho$. 
We also define the mixed Sobolev spaces
\begin{align}\label{eq:Hkl}
H^{2k,\ell}(\cS_\rho)
:=\left\{u\in L^2
\ \big\vert\  
(\Delta_{\cS_\rho})^k u_1 \in L^2 (\cS_\rho),
  (\p_x)^\ell u_2\in L^2(\cS_\rho) \right\}
\end{align}
for $k,\ell=0,1$,
with norms 
$$\|u\|_{2k,\ell;\rho}:=\sum_{0\le i\le k}
\|(\Delta_{\cS_\rho})^iu_1\|_\rho + 
\eps^{-1}\sum_{0\le j\le \ell}
\|\partial_x^j u_2\|_\rho\,.
$$
For $k=1, \ell=0$, the space $H^{2, 0}$ agrees with 
the corresponding Sobolev space $H^{2}\times L^2$,
and $H^{0,0}=L^2$. However,
for $\ell=1$, since $H^{2k,1}$
places no condition on $\partial_\theta u_2$,
the space $H^{0,1}$ properly contains
$L^2\times H^1$, and $H^{2,1}$ properly contains
$H^{2}\times H^1$. 
On the standard cylinder~$\cS_R$,
Eq.~\eqref{eq:Hkl} with $k=\ell=1$ coincides
with the definition of $H^{2,1}$ in Eq.~\eqref{eq:H21}.

We will show in Lemma~\ref{lem:21-equivalent}
that the norms $\|\cdot\|_{2k,\ell;\rho}$ are 
equivalent to $\|\cdot\|_{2k,\ell}$.  
Hence the cylindrical surface $\cS_\rho$ 
will be omitted from the notation 
whenever this is possible without causing confusion.

\subsection{The linear semigroup}
\label{subsec:Lin-A}

The linearization of Eq.~\eqref{eq:FHN_static}
about zero is given by the G\^ateaux derivative 
$A_\rho=dF_{\rho}(0)$.
We start with some basic properties of 
$A_\rho$. Throughout this subsection,
$\rho$ is fixed subject to the standing assumption.
The domain of $A_\rho$ (as an operator
on $L^2(\cS_\rho)$) is $H^{2,0}(\cS_\rho)$, and
its graph norm is equivalent to $\|\cdot\|_{2,0;\rho}$.
Since 
$$
\Re\,\langle A_\rho u, u\rangle_\rho  \le -\sigma\|u\|_\rho^2\,,
\quad u\in H^{2,1}(\cS_\rho)\,,
$$ 
where $\sigma=\{\alpha,\eps\gamma\}$, the graph
norm of $A_\rho$ is equivalent to $\|A_\rho u\|_\rho$,
\begin{equation}
\label{eq:domain-A}
\|u\|_{2,0;\rho}\lesssim \|A_\rho u\|_\rho
\lesssim \|u\|_{2,0;\rho}\,.
\end{equation}
In the same way as for the operator $\bar L$
in Lemma~\ref{lem:Lbar}, it follows with
Lemma~\ref{lem:dissipative} 
that $A_\rho$ generates a strongly continuous, exponentially 
decaying semigroup on $L^2(\cS_\rho)$ that has $H^{2,0}(\cS_\rho)$
as an invariant subspace.  

We want to work in the subspace $H^{2,1}$ that was used 
for Theorem~\ref{thm:nonli-stab}. To this end, 
we first restrict $A_\rho$ to the intermediate 
subspace $H^{0,1}(\cS_\rho)$.

\begin{lemma}[Domain of $A_\rho$ in $H^{0,1}$] 
\label{lem:domain-A}
Let $\alpha$, $\gamma$, $\eps$ be fixed positive constants,
and let $\rho$ be a positive function of class $C^2$
on the real line.  Then the 
operator $A_\rho$ maps $H^{2,1}(\cS_\rho)$
bijectively onto $H^{0,1}(\cS_\rho)$, and
\begin{equation}\label{eq:domain-A-01}
\| u \|_{2,1;\rho} 
\lesssim \| A_\rho u \|_{0,1;\rho}
\lesssim \| u \|_{2,1;\rho} \qquad (u \in H^{2,1}(\cS_\rho))\,.
	\end{equation}
\end{lemma}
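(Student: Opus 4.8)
The plan is to bootstrap from the already-established mapping property on $H^{2,0}(\cS_\rho)$. Recall we know $A_\rho\colon H^{2,0}(\cS_\rho)\to L^2(\cS_\rho)$ is a bijection with $\|u\|_{2,0;\rho}\simeq\|A_\rho u\|_\rho$. The strategy is to show that this bijection restricts to one between $H^{2,1}$ and $H^{0,1}$, i.e. that the extra regularity $\partial_x u_2\in L^2$ on the domain side is equivalent to $\partial_x (A_\rho u)_2\in L^2$ on the target side, with the two-sided norm bound \eqref{eq:domain-A-01}. Write $u=(u_1,u_2)$ and $A_\rho u=(w_1,w_2)$, so that $w_1=\Delta_{\cS_\rho}u_1-\alpha u_1$ and $w_2=\eps u_1-\eps\gamma u_2$. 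The key observation is that the second component is an \emph{algebraic} (non-differential) relation in the $x$-direction: $u_2=\gamma^{-1}(u_1-\eps^{-1}w_2)$, equivalently $w_2=\eps u_1-\eps\gamma u_2$, so $\partial_x u_2$ and $\partial_x w_2$ are related through $\partial_x u_1$.

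First I would establish the forward bound: given $u\in H^{2,1}(\cS_\rho)$, we have $u_1\in H^{2,0}$ so in particular $\partial_x u_1\in L^2$ (this is the standard interpolation-type estimate $\|\partial_x u_1\|\lesssim\|\Delta_{\cS_\rho}u_1\|+\|u_1\|$, valid on $\cS_\rho$ by the ellipticity of $\Delta_{\cS_\rho}$ — which follows from the standing assumptions on $\rho$ — and equivalence of the $\rho$-norms with the $R$-norms from Lemma \ref{lem:21-equivalent}); combined with $\partial_x u_2\in L^2$ this gives $\partial_x w_2=\eps\partial_x u_1-\eps\gamma\partial_x u_2\in L^2$, so $A_\rho u\in H^{0,1}$, with $\|A_\rho u\|_{0,1;\rho}\lesssim\|u\|_{2,1;\rho}$ immediate from the triangle inequality and \eqref{eq:domain-A}. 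Conversely, given $u\in H^{2,0}$ with $A_\rho u\in H^{0,1}$, the bijectivity on $H^{2,0}$ already supplies a unique preimage; I must upgrade it to $H^{2,1}$. From $\partial_x w_2\in L^2$ and $\partial_x u_1\in L^2$ (again the interpolation estimate, now using $u_1\in H^2$), solve $\partial_x u_2=\gamma^{-1}(\partial_x u_1-\eps^{-1}\partial_x w_2)\in L^2$, hence $u\in H^{2,1}$, with $\|u\|_{2,1;\rho}\lesssim\|u\|_{2,0;\rho}+\|\partial_x u_2\|_\rho\lesssim\|A_\rho u\|_\rho+\|\partial_x u_1\|_\rho+\|\partial_x w_2\|_\rho\lesssim\|A_\rho u\|_{0,1;\rho}$. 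Bijectivity between the two spaces then follows: injectivity is inherited from $H^{2,0}$, and surjectivity because any $w\in H^{0,1}\subset L^2$ has a preimage $u\in H^{2,0}$ which the preceding estimate places in $H^{2,1}$.

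The main obstacle is the clean handling of the mixed anisotropic Sobolev estimate $\|\partial_x u_1\|_\rho\lesssim\|\Delta_{\cS_\rho}u_1\|_\rho+\|u_1\|_\rho$ on the warped cylinder: unlike on $\cS_R$, the operator $\Delta_{\cS_\rho}$ has variable coefficients, so I cannot simply diagonalize via Fourier series. I would handle this by invoking Lemma \ref{lem:21-equivalent} to transfer the question to the standard cylinder, where the estimate is the elementary one already used in the proof of Lemma \ref{lem:L-domain}, and checking that the $C^2$-boundedness of $\rho$ (bounded and bounded away from zero) keeps all constants under control; alternatively one can argue directly that $\rho(x)(1+\rho'(x)^2)^{-1/2}$ is bounded above and below, so $\Delta_{\cS_\rho}$ is a uniformly elliptic operator in divergence form and standard elliptic regularity on the cylinder applies. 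Apart from this point, the argument is a short diagram chase through the two components.
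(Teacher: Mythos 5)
Your proposal is correct and follows essentially the same route as the paper: the upper bound comes from the triangle inequality together with the $H^{2,0}$ graph-norm estimate \eqref{eq:domain-A}, and surjectivity (with the lower bound) comes from solving $A_\rho u=w$ in $H^{2,0}$ and then reading off $\partial_x u_2=\gamma^{-1}\partial_x u_1-(\eps\gamma)^{-1}\partial_x w_2$ from the algebraic second component. One minor caution: prefer your second justification of $\|\partial_x u_1\|_\rho\lesssim\|\Delta_{\cS_\rho}u_1\|_\rho+\|u_1\|_\rho$ (uniform ellipticity of $\Delta_{\cS_\rho}$ under the standing assumptions on $\rho$, via integration by parts) over invoking Lemma~\ref{lem:21-equivalent}, since that lemma requires $R^{-1}\|\rho-R\|_{C^2}\le\tfrac1{16}$, which is not assumed here.
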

\begin{proof} 
Fix $\rho$ as in the assumptions, and let $u\in H^{2,1}(\cS_\rho)$. 
To simplify notation, we momentarily
suppress the dependence of the spaces and norms on $\rho$ 
in the notation.

Write $\|A_\rho u\|_{0,1}
= \|A_\rho u\| + \|\partial_x(\eps u_1-\eps\gamma u_2)\|$, 
and combine the upper bound in Eq.~\eqref{eq:domain-A}
with the estimates $\|\partial_x u_1\|\le \|u\|_{2,1}$
and $\|\partial_xu_2\|\le \|u\|_{0,1}$.
It follows that $A_\rho u\in H^{0,1}$, 
and the upper bound in Eq.~\eqref{eq:domain-A-01}
holds. In particular, $A_\rho$ maps $H^{2,1}$ to $H^{0,1}$.
By  Eq.~\eqref{eq:domain-A},
this map is injective.

To show that this map is also surjective, let $w\in H^{0,1}$.
Since $w\in L^2$,
the equation $A_\rho u=w$ has a unique solution
$u\in H^{2,0}$. The lower bound in Eq.~\eqref{eq:domain-A}
yields $u_1\in H^2$, and $\|u_1\|_{H^2}
\lesssim \|A_\rho u\|\le \|A_\rho u\|_{0,1}$.
For the second component, we use that
$\eps u_1-\eps\gamma u_2= w_2$, and estimate
$$
\|\partial_x u_2\|\le \gamma^{-1} \|\partial_x u_1\| +
(\eps\gamma)^{-1} \|\partial_x w_2 \| 
\lesssim \|u\|_{2,0} + \|w\|_{0,1} \lesssim \|A_\rho u\|_{0,1}\,.
$$
This proves surjectivity, and the lower bound.
\end{proof}

A useful consequence of Lemma~\ref{lem:domain-A} is that
$$\|B_{0,1;\rho}\|\lesssim\|B\|_{2,1;\rho}\lesssim \|B\|_{0,1;\rho}$$
for every bounded linear operator $B$
on $H^{0,1}(\cS_\rho)$ that commutes with $A_\rho$.
The next lemma provides spectral estimates
on $A_\rho$ that are needed to construct
the semigroup $e^{tA_\rho}$ on $H^{0,1}(\cS_\rho)$.

\begin{lemma}[$A_\rho$ is sectorial]
\label{lem:sectorial} Let $\alpha$, $\gamma$, $\eps$ be positive
constants, and let $\rho$ be a real-valued
function on $\R$ that is bounded and bounded away from zero.
Then $A_\rho$ generates an
analytic semigroup $e^{tA_\rho}$ on $H^{2,1}$.
The spectrum of $A_\rho$ on $H^{0,1}(\cS_\rho)$ is contained in 
the truncated sector
$$
	\Sigma:= \bigl\{ \lambda\in \bC \ \big\vert \ 
\Re\,\lambda \le 
-\sigma \min\{1, \eps^{-\frac12}\, | \Im\, \lambda|\} \bigr\} \,,
$$
where $\sigma:=\min\{\alpha,\eps\gamma\}$.
Moreover, we have the resolvent estimate
\begin{equation}
\label{eq:sectorial}
\|(\lambda-A)^{-1}\|_{2k,1;\rho}\le 
C(1+\sup|\rho'|)\min\left\{1,|\lambda|^{-1}\right\}\,,\qquad (k=0,1)
\end{equation}
for all $\lambda$ with
$\Re\,\lambda \ge -\frac12 \sigma 
\min\{1, \eps^{-\frac12}\, | \Im\, \lambda|\}$,
for some constant $C$. 
The semigroup satisfies
$$
\sup_{t>0}\|e^{tA_\rho}\|_{2,1}\le C
(1+\sup|\rho'|)e^{-\sigma t}\,.
$$
\end{lemma}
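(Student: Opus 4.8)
The plan is to work directly on $\cS_\rho$ and to reduce everything to a self‑adjoint model plus a bounded perturbation. I would write $A_\rho=D_\rho+B$ with $D_\rho=\operatorname{diag}(\Delta_{\cS_\rho},0)$ and $B=\begin{pmatrix}-\alpha&-1\\\eps&-\eps\gamma\end{pmatrix}$ a bounded matrix multiplication operator. From the divergence form in Eq.~\eqref{eq:Delta-rho}, together with the fact that $\rho^{-2}\sqrt g=\rho^{-1}\sqrt{1+\rho'^2}$ is independent of $\theta$, one integration by parts shows that $\Delta_{\cS_\rho}$ is self‑adjoint and negative semi‑definite on $L^2(\cS_\rho,d\mu_\rho)$ and that $\langle-\Delta_{\cS_\rho}v,v\rangle_\rho\ge c_\rho\|\partial_x v\|_\rho^2$ with $c_\rho:=\inf_x\rho(x)(1+\rho'(x)^2)^{-1/2}\gtrsim(1+\sup|\rho'|)^{-1}$. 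Hence $D_\rho$ generates an analytic semigroup on $L^2(\cS_\rho)$, and so does the bounded perturbation $A_\rho$. As in Lemma~\ref{lem:Lbar}, a short computation in the $\eps$‑weighted inner product gives $\Re\langle A_\rho u,u\rangle_\rho\le-\sigma\|u\|_\rho^2$, so $\sigma+A_\rho$ is dissipative and Lemma~\ref{lem:dissipative} yields $\spec_{L^2}(A_\rho)\subseteq\{\Re\lambda\le-\sigma\}\subseteq\Sigma$ and $\|(\lambda-A_\rho)^{-1}\|_\rho\le(\Re\lambda+\sigma)^{-1}$ for $\Re\lambda>-\sigma$.

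Next I would prove the resolvent bound Eq.~\eqref{eq:sectorial} first in $L^2$, i.e.\ for $k=0$. On the region $R_\sigma:=\{\Re\lambda\ge-\tfrac12\sigma\min\{1,\eps^{-1/2}|\Im\lambda|\}\}$ one has $\Re\lambda\ge-\tfrac12\sigma$, so the dissipativity bound already gives $\|(\lambda-A_\rho)^{-1}\|_\rho\le2/\sigma\lesssim1$, and it remains to get the $|\lambda|^{-1}$ decay for $|\lambda|$ large. There $\|(\lambda-D_\rho)^{-1}\|_\rho=\dist(\lambda,(-\infty,0])^{-1}\lesssim|\lambda|^{-1}$ (using $\Re\lambda\ge-\tfrac12\sigma$ and, since $\eps$ is small, $\sigma\le1$), so once $|\lambda|$ is large enough that $\|(\lambda-D_\rho)^{-1}\|_\rho\|B\|\le\tfrac12$ the Neumann series in $(\lambda-A_\rho)^{-1}=(1-(\lambda-D_\rho)^{-1}B)^{-1}(\lambda-D_\rho)^{-1}$ converges and yields $\|(\lambda-A_\rho)^{-1}\|_\rho\lesssim|\lambda|^{-1}$. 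Combining, $\|(\lambda-A_\rho)^{-1}\|_\rho\lesssim\min\{1,|\lambda|^{-1}\}$ on $R_\sigma$; this also re‑confirms sectoriality and hence analyticity.

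The transfer to the mixed Sobolev norms is the heart of the proof. Given $z\in H^{0,1}(\cS_\rho)$ and $\Re\lambda>-\sigma$, set $w=(\lambda-A_\rho)^{-1}z$; the second row of $(\lambda-A_\rho)w=z$ reads $(\lambda+\eps\gamma)w_2-\eps w_1=z_2$, so $\partial_x w_2=(\lambda+\eps\gamma)^{-1}(\partial_x z_2+\eps\partial_x w_1)$, and since $\Re(\lambda+\eps\gamma)>\eps\gamma-\sigma\ge0$ and $|\lambda+\eps\gamma|\gtrsim\max\{1,|\lambda|\}$ on $R_\sigma$, this gives $\partial_x w_2\in L^2$ with $\|\partial_x w_2\|_\rho\lesssim\min\{1,|\lambda|^{-1}\}\bigl(\|\partial_x z_2\|_\rho+\eps\|\partial_x w_1\|_\rho\bigr)$. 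To control $\|\partial_x w_1\|_\rho$ I would use the first row, $\Delta_{\cS_\rho}w_1=(\lambda+\alpha)w_1+w_2-z_1$, pair with $w_1$, and invoke the lower bound on $\langle-\Delta_{\cS_\rho}w_1,w_1\rangle_\rho$ to get $c_\rho\|\partial_x w_1\|_\rho^2\le\bigl((|\lambda|+\alpha+1)\|w\|_\rho+\|z\|_\rho\bigr)\|w\|_\rho$; inserting the $L^2$ bound $\|w\|_\rho\lesssim\min\{1,|\lambda|^{-1}\}\|z\|_\rho$ and $(|\lambda|+1)\min\{1,|\lambda|^{-1}\}\le2$ yields $\|\partial_x w_1\|_\rho\lesssim c_\rho^{-1/2}\min\{1,|\lambda|^{-1}\}^{1/2}\|z\|_\rho$. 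Reassembling (using $\|\partial_x z_2\|_\rho\le\eps\|z\|_{0,1;\rho}$, $\min\{1,|\lambda|^{-1}\}^{3/2}\le\min\{1,|\lambda|^{-1}\}$ and $c_\rho^{-1/2}\lesssim(1+\sup|\rho'|)^{1/2}\le1+\sup|\rho'|$) gives Eq.~\eqref{eq:sectorial} for $k=0$; for $k=1$, the resolvent commutes with $A_\rho$, so the remark following Lemma~\ref{lem:domain-A} upgrades the bound from the $0,1;\rho$‑norm to the $2,1;\rho$‑norm. The same formula for $w_2$ shows $(\lambda-A_\rho)^{-1}$ restricts to a bounded operator on $H^{0,1}(\cS_\rho)$ whenever $\Re\lambda>-\sigma$, so $\spec_{H^{0,1}}(A_\rho)\subseteq\{\Re\lambda\le-\sigma\}\subseteq\Sigma$, and the resolvent estimate makes $A_\rho$ sectorial on $H^{0,1}$, hence on the invariant subspace $H^{2,1}$ — this is the asserted analytic semigroup on $H^{2,1}$.

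For the semigroup bound, Lemma~\ref{lem:dissipative} gives $\|e^{tA_\rho}\|_\rho\le e^{-\sigma t}$; since $e^{tA_\rho}$ commutes with $A_\rho$, Eq.~\eqref{eq:domain-A} upgrades this to $\|e^{tA_\rho}\|_{2,0;\rho}\lesssim e^{-\sigma t}$, while analyticity gives the smoothing bound $\|e^{tA_\rho}\|_{L^2\to H^{2,0}}\lesssim(1+t^{-1})e^{-\sigma t}$. Writing the second component of $u(t)=e^{tA_\rho}u_0$ by Duhamel, $\partial_x u_2(t)=e^{-\eps\gamma t}\partial_x u_{0,2}+\eps\int_0^t e^{-\eps\gamma(t-s)}\partial_x u_1(s)\,ds$, and bounding $\|\partial_x u_1(s)\|_\rho\lesssim\|u_1(s)\|_{H^2}^{1/2}\|u_1(s)\|_\rho^{1/2}\lesssim(1+\sup|\rho'|)^{1/2}(1+s^{-1})^{1/2}e^{-\sigma s}\|u_0\|_{0,1;\rho}$, the convolution is controlled by $(1+\sup|\rho'|)e^{-\sigma t}$ (in the borderline case $\alpha=\eps\gamma$ one picks up a harmless polynomial factor, absorbed by using any exponent strictly below $\min\{\alpha,\eps\gamma\}$). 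Thus $\|e^{tA_\rho}\|_{0,1;\rho}\lesssim(1+\sup|\rho'|)e^{-\sigma t}$, and a final application of the commuting‑operator remark gives the bound on $\|e^{tA_\rho}\|_{2,1}$. I expect the main obstacle to be the third step: extracting the genuine $|\lambda|^{-1}$ decay through the $x$‑derivative of the second component and the energy estimate for $\partial_x w_1$, while tracking the single power of $(1+\sup|\rho'|)$ that the ellipticity constant $c_\rho$ of $\Delta_{\cS_\rho}$ forces into every estimate.
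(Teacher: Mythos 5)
Your argument is correct in substance, but it takes a genuinely different route from the paper. The paper's proof runs entirely through a single weighted inner product $\langle u,v\rangle_s$ on $H^{0,1}(\cS_\rho)$ (with a small parameter $s$ multiplying the $\partial_x u_2$ pairing, chosen as $s=\gamma(1+\sup|\rho'|^2)^{-1}$): it shows the numerical range of $A_\rho$ with respect to $\langle\cdot,\cdot\rangle_s$ lies in the truncated sector, and then a single appeal to the numerical-range theorem \cite[Theorem 1.3.9]{Pazy} delivers the spectral inclusion, the resolvent bound, and (via dissipativity in the equivalent norm $\|\cdot\|_s$) the semigroup bound all at once; the factor $1+\sup|\rho'|$ enters only through the norm equivalence $\|\cdot\|_s\sim\|\cdot\|_{0,1}$. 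You instead build up from $L^2$: self-adjointness of $\Delta_{\cS_\rho}$ plus a bounded matrix perturbation and a Neumann series give the $L^2$ resolvent decay, and you then bootstrap to $H^{0,1}$ by solving the second row of $(\lambda-A_\rho)w=z$ for $\partial_x w_2$ and running an energy estimate for $\partial_x w_1$; the semigroup bound comes from Duhamel plus analytic smoothing rather than from contractivity. Your route is more elementary and makes the mechanism (the second component is an ODE in $t$ and in $\lambda$) very transparent, at the cost of messier bookkeeping; the paper's route is shorter and gives the clean exponential rate directly. Two small points to tighten: (i) relative to the weighted norm $\|\partial_x v\|_\rho$ the ellipticity constant of $\Delta_{\cS_\rho}$ is $\inf_x(1+\rho'(x)^2)^{-1}$, so $c_\rho^{-1/2}\lesssim 1+\sup|\rho'|$ rather than its square root --- this still lands inside the stated constant; (ii) the region $\Re\,\lambda\ge-\tfrac12\sigma\min\{1,\eps^{-1/2}|\Im\,\lambda|\}$ is contained in the half-plane $\Re\,\lambda\ge-\tfrac12\sigma$ and so does not by itself contain a sector of angle exceeding $\pi$; to conclude analyticity on $H^{0,1}$ you should note explicitly that your Neumann-series and bootstrap estimates in fact hold on a full sector $\{|\arg\lambda|\le\pi-\delta,\ |\lambda|\ge M\}$, which they do, since there $\|(\lambda-D_\rho)^{-1}\|\lesssim|\lambda|^{-1}$ and $|\lambda+\eps\gamma|\gtrsim|\lambda|$. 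Neither point affects the validity of the approach.
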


\begin{proof} We will bound the numerical range
of $A_{\rho}$ with respect
to a certain weighted inner product, and then 
apply~\cite[Theorem 1.3.9]{Pazy}.
As in the proof of Lemma~\ref{lem:domain-A},
$\rho$ is fixed and will be suppressed in the notation.
For $s> 0$, define 
$$
\langle u, v \rangle_s :=
\langle u_1, v_1 \rangle 
+ \eps^{-1}\langle u_2, v_2 \rangle + s\eps^{-1}
\langle \partial_x u_2, \partial_x v_2\rangle\,.
$$
The corresponding norm $\|\cdot\|_s$
is equivalent to the norm $\|\cdot\|_{0,1}$
from Eq.~\eqref{eq:Hkl},
$$ \min\{1,s^\frac12\} \|u\|_{0,1}\le \|u\|_s
\le \max\{1,s^{\frac12}\}\|u\|_{0,1}\,,\qquad (s>0)\,.
$$
We compute
\begin{align}
\notag \Re\, 
\langle A_\rho u, u \rangle_s
&= -\alpha \|u_1\|^2
-\gamma \|u_2\|^2  +
\langle \Delta_{\cS_\rho} u_1, u_1\rangle + s\Re\,  
\langle \partial_x u_1-\gamma\partial_xu_2, \partial_x u_2\rangle
\\
\notag
&\hskip -1cm\le -\sigma\|u\|^2 
-\frac{1}{1+\sup|\rho'|^2}\|\partial_x u_1\|^2
+s \|\partial_x u_1\|\|\partial_x u_2\|
-s\gamma\|\partial_x u_2\|^2\\
\label{eq:sectorial-Re}
&\hskip -1cm
\le -\sigma\|u\|_s^2 +\frac{s^2}4 (1+\sup|\rho'|^2)\|\partial_x u_2\|^2\,.
\end{align}
Note that the inner products and norms on the right
hand side of the first line are the standard Riemannian
ones for scalar functions in $L^2(\cS_\rho)$.
In the second line, we have integrated
the Laplacian term by parts.
The last step follows by completing the square.
For any $q\in (0,1)$, we can achieve
$\Re\, \langle A_\rho u,u\rangle_s\le -q\sigma\|u\|_s^2$
by choosing $s$ sufficiently small.
By Lemma~\ref{lem:dissipative},
$A_\rho$ generates a semigroup of contractions
with respect to the norm $\|\cdot\|_s$.
Moreover, the spectrum of $A_\rho$ is contained in
each of the half-planes $\{\Re\, \lambda\le-q\sigma\}$, and hence 
in their intersection. 

Likewise,
\begin{align*}
\Im\, \langle A_\rho u,u\rangle_s 
& = 2 \Im\, \langle u_1, u_2\rangle
+ s\Im\, \langle \partial_x u_1,\partial_x u_2\rangle\\
&\le \sqrt{\eps}\|u\|^2 + s\|\partial_x u_1\|
\|\partial_xu_2\|\,.
\end{align*}
Comparing with the second line of Eq.~\eqref{eq:sectorial-Re},
we see that for $s>0$ sufficiently small
$$\Re\,\langle A_\rho u, u\rangle_s \le -\sigma\eps^{-\frac12}
|\Im\, \langle A_\rho u, u\rangle_s|\,.  
$$
Since the resolvent set of $A_\rho$ contains 0,
by \cite[Theorem~1.3.9]{Pazy},
it contains the entire complement of the sector $\{\Re\,\lambda\le 
-\sigma \eps^{-\frac12}|\Im\, \lambda|\}$.
In summary, for $s$ sufficiently small, the
numerical range of $A_\rho$ with 
respect to $\langle\cdot,\cdot\rangle_s$
lies in 
$$\Sigma_q:=\left\{\lambda\in\bC\ \big\vert\ \Re\,\lambda
\le -\sigma\min\{q, \eps^{-\frac12}|\Im\, \lambda|\}\right\}\,.
$$
Moreover, the resolvent satisfies
$$
\|(\lambda-A)^{-1}\|_s\le \left(\inf_{z\in \Sigma_q}
\|\lambda-z\|_s\right)^{-1}\,,\qquad (\lambda\not\in \Sigma_q)\,.
$$
To obtain Eq.~\eqref{eq:sectorial}, we choose $q=\frac34$,
$s= \gamma(1+\sup|\rho'|^2)^{-1}$,
and compare
$\|\cdot\|_s$ with $\|\cdot\|_{0,1}$.  Since the resolvent commutes
with $A_\rho$, by Lemma~\ref{lem:domain-A},
the estimate holds, with a suitably adjusted
constant, also for the norm $\|\cdot\|_{2,1}$.
By Lemma~\ref{lem:dissipative}, the
bound on the semigroup follows from the
dissipativity of $A_\rho$.
\end{proof}

\subsection{Comparison with the standard cylinder}
\label{subsec:compare}

In this subsection we compare solutions of the FHNcyl 
system on $\cS_\rho$ with solutions on the standard
cylinder $\cS_R$. For this purpose, we consider 
functions $u(x,\theta)$ on $\cS_\rho$ as
functions on $\cS_R$, via the 
coordinate diffeomorphism $\psi_\rho:\cS_R\to\cS_\rho$
defined in Eq.~\eqref{eq:diffeo}.
We next show that the composition $u\mapsto u\circ\psi_\rho$ induces
a bounded linear transformation
from $H^{2,1}(\cS_\rho)$ to $H^{2,1}(\cS_{R})$.
With a slight abuse of notation, we identify the
composition $u\circ\psi_\rho$ (viewed as a function on the standard
cylinder $\cS_R$) with $u$ itself (on $\cS_\rho$),
and the norm $\|\cdot\|_{2,1;\rho}$ (originally
defined on $H^{2,1}(\cS_\rho)$)
with its pull-back to $H^{2,1}(\cS_R)$.

\begin{lemma} [Equivalence of Sobolev spaces]
\label{lem:21-equivalent} 
Let $\alpha$, $\gamma$, $\eps$ be fixed positive 
constants, and let $\rho$ be a positive function of class $C^2$.
Under the assumptions of Proposition~\ref{prop:difference},
if $\delta:=R^{-1}\|\rho-R\|_{C^2}\le \frac{1}{16}$, then
$$
\frac12 \|u\|_{2k,\ell}\le \|u\|_{2k,\ell;\rho} \le 2\|u\|_{2k,\ell}\,
\qquad (k,\ell\in\{0,1\})\,.
$$
\end{lemma}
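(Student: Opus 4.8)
The plan is to work on the standard cylinder $\cS_R$ throughout, using the coordinate diffeomorphism $\psi_\rho$ to pull back everything, and to show that the $\rho$-dependent norm $\|\cdot\|_{2k,\ell;\rho}$ differs from $\|\cdot\|_{2k,\ell}$ only by multiplicative factors that tend to $1$ as $\delta\to 0$. The key point is that each of the four ingredients in the norms — the $L^2(\cS_\rho)$ inner product $\langle\cdot,\cdot\rangle_\rho$ of Eq.~\eqref{eq:inner-rho}, the operator $\Delta_{\cS_\rho}$ of Eq.~\eqref{eq:Delta-rho}, the derivative $\partial_x u_2$, and the second iterate $(\Delta_{\cS_\rho})^2$ when $k=1$ — is a smooth function of $\rho$ and $\rho'$ (and $\rho''$), equal to its counterpart on $\cS_R$ when $\rho\equiv R$. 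Under the hypothesis $\delta=R^{-1}\|\rho-R\|_{C^2}\le\frac1{16}$ one has the pointwise bounds $\tfrac{15}{16}R\le\rho\le\tfrac{17}{16}R$ and $|\rho'|,|\rho''|\le\tfrac1{16}$ (after rescaling; note $R\le1$), so all the relevant coefficients — the Riemannian density $\sqrt{g}=\rho\sqrt{1+\rho'^2}$, the factor $\rho/\sqrt{1+\rho'^2}$ inside $\Delta_{\cS_\rho}$, and $\rho^{-2}$ — together with their reciprocals are within a factor $2$ (or better, $1+C\delta$) of the constants $R$, $R$, $R^{-2}$ respectively.

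Concretely, I would proceed in four steps. \emph{Step 1 (the $L^2$ norms, $k=\ell=0$).} Since $d\mu_\rho=\sqrt{g}\,d\theta dx$ and $\sqrt{g}=\rho\sqrt{1+\rho'^2}$ while the area element on $\cS_R$ is $R\,d\theta dx$, the ratio $\sqrt{g}/R$ lies in $[\tfrac{15}{16},\tfrac{17}{16}\cdot\tfrac{\sqrt{257}}{16}]\subset[\tfrac12,2]$ under $\delta\le\tfrac1{16}$; squaring the weighted $L^2$ integrals and using the $\eps^{-1}$-weighting uniformly on both components gives $\tfrac12\|u\|\le\|u\|_\rho\le 2\|u\|$, which is the case $k=\ell=0$. \emph{Step 2 (the $\ell=1$ term).} The quantity $\partial_x u_2$ is literally the same differential operator in both norms (no geometry enters the second-component derivative), so the only discrepancy between $\|\partial_x u_2\|_\rho$ and $\|\partial_x u_2\|$ is again the density factor controlled in Step 1. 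This settles $k=0,\ell=1$ and, combined with Step 1, $k=0$ in general. \emph{Step 3 (the $k=1$ term).} Write $\Delta_{\cS_\rho}=\frac1{\sqrt g}\partial_x(a(x)\partial_x)+\rho^{-2}\partial_\theta^2$ with $a=\rho/\sqrt{1+\rho'^2}$, and compare term by term with $\Delta_{\cS_R}=\partial_x^2+R^{-2}\partial_\theta^2$. Expanding, $\Delta_{\cS_\rho}u_1-\Delta_{\cS_R}u_1$ is a second-order operator whose coefficients are $C\delta$-small (they involve $a/\sqrt g-1$, $(\partial_x a)/\sqrt g$, and $\rho^{-2}-R^{-2}$, each $\lesssim\delta$ in $L^\infty$), so $\|\Delta_{\cS_\rho}u_1-\Delta_{\cS_R}u_1\|\lesssim\delta\|u_1\|_{H^2}$; absorbing this into $\|\Delta_{\cS_R}u_1\|\le\|u_1\|_{H^2}$ and using elliptic regularity on $\cS_R$ (that $\|u_1\|_{H^2}$ is equivalent to $\|\Delta_{\cS_R}u_1\|+\|u_1\|$) gives a two-sided comparison of $\|\Delta_{\cS_\rho}u_1\|_\rho$ with $\|\Delta_{\cS_R}u_1\|$ up to a factor $1+C\delta\le2$ for $\delta$ small. \emph{Step 4 (assembly).} Summing the componentwise estimates from Steps 1–3 over the terms appearing in $\|\cdot\|_{2k,\ell;\rho}$ and choosing the numerical threshold $\delta\le\tfrac1{16}$ small enough that all the $1+C\delta$ factors are $\le2$ and all the $1-C\delta$ factors are $\ge\tfrac12$, yields $\tfrac12\|u\|_{2k,\ell}\le\|u\|_{2k,\ell;\rho}\le2\|u\|_{2k,\ell}$ for all $k,\ell\in\{0,1\}$.

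The main obstacle is Step 3: controlling $\|(\Delta_{\cS_\rho}-\Delta_{\cS_R})u_1\|$ requires that the difference operator's coefficients be small not just in $L^\infty$ but uniformly in $x$, and one must verify that the $C^2$-closeness of $\rho$ to $R$ (including one derivative of $a$, hence $\rho''$) really does bound all coefficients — in particular the first-order term $\partial_x(a/\sqrt g)$ — by $O(\delta)$, and then invoke interior elliptic estimates on $\cS_R$ to convert $\|\Delta_{\cS_R}u_1\|+\|u_1\|$ back into $\|u_1\|_{H^2}$ with a constant independent of $R\le1$. A minor subtlety is tracking the $R$-dependence: since $\delta=R^{-1}\|\rho-R\|_{C^2}$ is scale-invariant under $\rho\mapsto R\tilde\rho$, $x\mapsto Rx$, it is cleanest to rescale to $R=1$ at the outset so all estimates become $R$-free, and this is legitimate because both norms rescale the same way.
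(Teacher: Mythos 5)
Your proposal is correct and follows essentially the same route as the paper: compare the Riemannian densities pointwise, write $\Delta_{\cS_\rho}-\Delta_{\cS_R}$ as a second-order operator with $O(\delta)$-small coefficients, absorb the resulting first-order term via the interpolation bound $\|\partial_x w\|\le\tfrac12(\|\Delta_{\cS_R}w\|+\|w\|)$, and then assemble the componentwise estimates. (One harmless slip: for $k=1$ the norm $\|\cdot\|_{2k,\ell;\rho}$ involves only $(\Delta_{\cS_\rho})^{1}u_1$, never a second iterate of the Laplacian, and your Step 3 in fact treats the correct quantity.)
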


\begin{proof} 
We will show that for every scalar-valued function $w$ on $\cS_\rho$,
\begin{equation}
\label{eq:rho-R}
\begin{cases}
\left| \|w\|_{L^2(\cS_\rho)}- \|w\|_{L^2(\cS_R)}\right|
\le 2\delta \|w\|_{L^2(\cS_R)}\,,\\
\left| \|\partial_x w\|_{L^2(\cS_\rho)}- \|\partial_x w\|_{L^2(\cS_R)}\right|
\le 2\delta  \|\partial_x w\|_{L^2(\cS_R)}\,,\\
\left| \|\Delta_{\cS_\rho} w\|_{L^2(\cS_\rho)}- 
\|\Delta_{\cS_R}w\|_{L^2(\cS_R)}\right|
\le 8\delta 
\left( \|\Delta_{\cS_R}w\|_{L^2(\cS_R)}+ \|w\|_{L^2(\cS_R)}\right)\,,
\end{cases}
\end{equation}
and then apply the triangle inequality.

For the first line, we have by Eq.~\eqref{eq:inner-rho}
$$
\|w\|_\rho^2 =\int_\R\int_{S^1} |w|^2 \, 
\sqrt{g(x)} \, d\theta dx\,,
$$
where $g=\rho^2(1+(\rho')^2)$.  The pointwise bound
$|\sqrt{g(x)}-R|\le 2\delta$ yields
the first line of Eq.~\eqref{eq:rho-R}. 
The second line follows by applying the first
one to $\partial_x w$.

For the third line, we write the difference between the 
Laplacians as
$$
\Delta_{\cS_\rho}-\Delta_{\cS_R}
= a(x) \partial_x^2 + b(x) \partial_x
+ c(x) \, |\rho(x)-R|\, (R^{-2}\, \partial_\theta^2)\,,
$$
where the coefficients are pointwise bounded by
$0<a(x)\le \frac12 R\delta$, $|b(x)|\le \delta+ R^2\delta^2$,
and $c(x)\le 6 R\delta$, see Eq.~\eqref{eq:Delta-rho}.
Since $\delta\le1$, $R\le 1$, and
$\|\partial_x w\|\le \frac12(\|\Delta_{\cS_R} w\| +\|w\|)$,
we obtain with the triangle inequality that
\begin{equation}
\label{eq:rho-R-4}
\|(\Delta_{\cS_\rho}-\Delta_{\cS_R})w\|
\le  6\delta \left( \|\Delta_{\cS_R}w\| + \|w\|\right)\,.
\end{equation}
Using once more the triangle inequality, 
as well as the first line of Eq.~\eqref{eq:rho-R},
we arrive at
\begin{align*}
\bigl| \|\Delta_{\cS_\rho}w\|_\rho - \|\Delta_{\cS_R}w\| \bigr|
&\le
\|(\Delta_{\cS_\rho}-\Delta_{\cS_R})w\| + 
\bigl| \| \Delta_{\cS_\rho}w\|_{\rho}-\| \Delta_{\cS_\rho}w\|
\bigr|\\
&\le 8\delta \left( \|\Delta_{\cS_R}w\| + \|w\|\right)\,.
\end{align*}
When $\delta\le \frac1{16}$, we can 
solve Eq.~\eqref{eq:rho-R} for 
the norm on $L^2(\cS_\rho)$ to obtain
$$
\frac12 \|w\|_{L^2(\cS_R)}\le \|w\|_{L^2(\cS_\rho)}
\le  2\|w\|_{L^2(\cS_R)}\,,
$$
and likewise for $\|\partial_x w\|_\rho$ and $\|\Delta_{\cS_\rho}w\|_\rho$.
By the definition of the norms in Eqs.~\eqref{eq:H21} and~\eqref{eq:Hkl},
this proves the claim.
\end{proof}

From now on, we identify the spaces
$H^{2k,\ell}(\cS_\rho)$ with $H^{2k,\ell}(\cS_R)$,
and use the standard norms $\|\cdot\|_{2k,\ell}$.
Under the assumptions of Lemma~\ref{lem:21-equivalent},
the inequalities in
Lemmas~\ref{lem:domain-A} and~\ref{lem:sectorial}
hold also for these norms.

The next lemma bounds the difference
between the resolvents of $A_\rho$ and $A_R$.

\begin{lemma} [Perturbation estimate for the resolvent]
\label{lem:diff-res}
Let $\alpha$, $\gamma$ and $\eps$ be positive constants,
and $0<R\le 1$.
There exists a constant $C$ 
such that if $\delta:=R^{-1}\|\rho-R\|_{C^2}\le \frac{1}{16}$, then
$$
\| (\lambda - A_\rho)^{-1} - (\lambda - A_R)^{-1} \|_{2,1}
	\le C\delta \min\{1, |\lambda|^{-1}\}
$$
for all $\lambda$ with $\Re\,\lambda\ge -\frac12 \sigma\min\{1,
\eps^{-\frac12}|\Im\, \lambda|\}$.
\end{lemma}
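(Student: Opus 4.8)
The plan is to start from the second resolvent identity
\[
(\lambda - A_\rho)^{-1} - (\lambda - A_R)^{-1}
= (\lambda - A_\rho)^{-1}\,(A_\rho - A_R)\,(\lambda - A_R)^{-1}\,,
\]
which is legitimate because, by Lemma~\ref{lem:sectorial}, $\lambda$ lies in the resolvent set of $A_R$ (apply the lemma with $\rho\equiv R$, so that $\sup|\rho'|=0$) and of $A_\rho$ throughout the stated region, and because $(\lambda-A_R)^{-1}$ maps $H^{2,1}$ into itself, so its image lies in the domain of the second-order operator $A_\rho-A_R$. Now $A_\rho - A_R$ is the operator matrix with $\Delta_{\cS_\rho}-\Delta_{\cS_R}$ in the $(1,1)$ entry and zeros elsewhere. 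The key observation is that it is \emph{small}, and, since its second component vanishes identically, it automatically takes values in $H^{0,1}$, with $H^{0,1}$-norm equal to the $L^2$-norm of its first component; by Eq.~\eqref{eq:rho-R-4}, for $v\in H^{2,1}$,
\[
\|(A_\rho - A_R)v\|_{0,1}
= \|(\Delta_{\cS_\rho}-\Delta_{\cS_R})v_1\|
\le 6\delta\bigl(\|\Delta_{\cS_R}v_1\| + \|v_1\|\bigr)
\lesssim \delta\,\|v\|_{2,1}\,.
\]
Thus $A_\rho - A_R\colon H^{2,1}\to H^{0,1}$ has operator norm $O(\delta)$. Here, and below, the hypothesis $\delta\le\tfrac1{16}$ serves two purposes: via Lemma~\ref{lem:21-equivalent} it lets us replace the $\rho$-weighted norms in Lemmas~\ref{lem:domain-A} and~\ref{lem:sectorial} by the fixed norms $\|\cdot\|_{0,1}$ and $\|\cdot\|_{2,1}$ on $\cS_R$, and it gives $\sup|\rho'|\le\|\rho-R\|_{C^2}\le\tfrac1{16}$, so all implied constants are uniform in $\rho$.

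It then remains to compose three bounds. Reading the resolvent identity from right to left: the factor $(\lambda-A_R)^{-1}$ has norm $\lesssim\min\{1,|\lambda|^{-1}\}$ as an operator on $H^{2,1}$ (Lemma~\ref{lem:sectorial} with $k=1$, $\rho\equiv R$); the factor $A_\rho - A_R$ has norm $\lesssim\delta$ as an operator $H^{2,1}\to H^{0,1}$ (the display above); and I claim $(\lambda-A_\rho)^{-1}$ is bounded $H^{0,1}\to H^{2,1}$ with norm $\lesssim 1$, uniformly in $\lambda$ over the region in question. Indeed, given $w\in H^{0,1}$, put $v:=(\lambda-A_\rho)^{-1}w$; Lemma~\ref{lem:sectorial} with $k=0$ gives $\|v\|_{0,1}\lesssim\min\{1,|\lambda|^{-1}\}\|w\|_{0,1}$, while $A_\rho v=\lambda v-w\in H^{0,1}$, so the lower bound $\|v\|_{2,1}\lesssim\|A_\rho v\|_{0,1}$ of Lemma~\ref{lem:domain-A} yields
\[
\|v\|_{2,1}
\lesssim \|\lambda v - w\|_{0,1}
\le |\lambda|\,\|v\|_{0,1}+\|w\|_{0,1}
\lesssim \bigl(\min\{|\lambda|,1\}+1\bigr)\|w\|_{0,1}
\lesssim \|w\|_{0,1}\,.
\]
Multiplying the three operator norms gives $\|(\lambda-A_\rho)^{-1}-(\lambda-A_R)^{-1}\|_{2,1}\lesssim\delta\min\{1,|\lambda|^{-1}\}$, which is the assertion of the lemma.

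The one genuinely delicate step is the bound on the last factor: although the resolvent of $A_\rho$ decays only like $|\lambda|^{-1}$ as an operator on $H^{0,1}$, it maps $H^{0,1}$ into the two-orders-smoother space $H^{2,1}$ with a bound that is merely $O(1)$ in $\lambda$ — the factor $|\lambda|$ produced by rewriting $A_\rho v=\lambda v-w$ exactly cancels the $|\lambda|^{-1}$ coming from the $H^{0,1}$ resolvent estimate. It is precisely the bijectivity of $A_\rho\colon H^{2,1}\to H^{0,1}$ in Lemma~\ref{lem:domain-A} (and the accompanying two-sided estimate for $\|A_\rho v\|_{0,1}$) that makes this cancellation available; note also that routing the composition through $H^{0,1}$ rather than through $L^2$ is essential, since $(\lambda-A_\rho)^{-1}$ sends $L^2$ only into $H^{2,0}$, which does not control $\partial_x u_2$. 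Everything else is routine bookkeeping about which of the nested spaces $H^{2,1}\subset H^{0,1}\subset L^2$ each factor acts on.
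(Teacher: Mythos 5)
Your proof is correct and follows essentially the same route as the paper: the second resolvent identity, the bound $\|(A_\rho-A_R)v\|_{0,1}\lesssim\delta\|v\|_{2,1}$ from Eq.~\eqref{eq:rho-R-4}, the $\min\{1,|\lambda|^{-1}\}$ decay of one resolvent on $H^{2,1}$, and the uniform $H^{0,1}\to H^{2,1}$ bound for the other obtained by writing $A v=\lambda v-w$ and invoking the two-sided estimate of Lemma~\ref{lem:domain-A}. The only difference is the order of factors — the paper uses $(\lambda-A_R)^{-1}W(\lambda-A_\rho)^{-1}$, so it is $(\lambda-A_R)^{-1}$ that carries the smoothing $H^{0,1}\to H^{2,1}$ step and $(\lambda-A_\rho)^{-1}$ that supplies the $|\lambda|^{-1}$ decay, whereas you swap these roles — which is immaterial since Lemmas~\ref{lem:domain-A} and~\ref{lem:sectorial} apply to both operators.
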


\begin{proof}
If $\lambda$ is as in the statement of the
lemma, then by Lemma~\ref{lem:sectorial}
it lies in the resolvent set of both $A_\rho$ and $A_R$.
To estimate the difference, we write
$$
A_\rho-A_R= \begin{pmatrix}
\Delta_{\cS_\rho} -\Delta_{\cS_R} & 0 \\
0 & 0
\end{pmatrix}=:W\,,
$$
and apply the resolvent identity 
$$(\lambda\!-\! A_\rho)^{-1} - (\lambda\!-\! A_{R})^{-1} =
(\lambda\!-\! A_R)^{-1} W(\lambda\!-\!A_\rho)^{-1}\,.
$$
For the factor on the right, we use Lemmas~\ref{lem:21-equivalent}
and~\ref{lem:domain-A} to see that
$$
\|(\lambda-A_\rho)^{-1}\|_{2,1}
\lesssim \|(\lambda-A_\rho)^{-1}\|_{2,1;\rho}
\lesssim \|(\lambda-A_\rho)^{-1}\|_{0,1;\rho}
\lesssim \min\{1, |\lambda|^{-1}\}\,.
$$
The second inequality holds because the resolvent commutes with
$A_\rho$.
By Eq.~\eqref{eq:rho-R-4}, the middle factor
maps $H^{2,1}$ into $H^{0,1}$ and satisfies
$$
\|W u\|_{0,1}= \|(\Delta_{\cS_\rho}-\Delta_{\cS_R})u_1\|
\lesssim \delta\|u\|_{2,1}
$$
for all $u\in H^{2,1}$.
By Lemma~\ref{lem:domain-A}, the factor on the left maps
$H^{0,1}$ back into $H^{2,1}$, and
\begin{align*}
\|(\lambda-A_R)^{-1}u\|_{2,1}
&\lesssim \|A_R(\lambda-A_R)^{-1}u\|_{0,1} \\
&\le \|u\|_{0,1} + |\lambda|\, \|(\lambda-A_R)^{-1}u\|_{0,1} \\
&\lesssim \|u\|_{0,1}
\end{align*}
for all $u\in H^{0,1}$.
In the second line we have written $A_R=-(\lambda-A_R)+\lambda$
and applied the triangle inequality, and in the last line
we have used that $\|(\lambda-A_R)^{-1}\|_{0,1}
\lesssim \min\{1, |\lambda|^{-1}\}$ by Lemma~\ref{lem:sectorial}.

Combining the inequalities for the three factors, we conclude that
$$
\left\|\bigl((\lambda\!-\!A_\rho)^{-1}-(\lambda\!-\!A_R)^{-1}\bigr)u
\right\|_{2,1} 
\lesssim \delta \min\{1, |\lambda|^{-1}|\}\, \|u\|_{2,1}
$$
for all $u\in H^{2,1}$,
proving the claim.
\end{proof}

\begin{proposition}[Perturbation estimate for the semigroup]
\label{prop:difference}
Let $\alpha$, $\gamma$ and $\eps$ be positive constants.
There exists a constant $C$ such that,
if $0<R\le 1$ and $\delta:=R^{-1}\|\rho-R\|_{C^2}\le \frac1{16}$,
then the semigroup generated
by $A_\rho$ on $H^{2,1}$ satisfies
$$
\| e^{t A_R} - e^{t A_{\rho}} \|_{2,1} \leq C\delta
(1+\log t^{-1})
$$
for all $t\ge 0$. 
\end{proposition}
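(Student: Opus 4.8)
The plan is to write both semigroups as Dunford contour integrals over one and the same contour $\Gamma$ that runs off to infinity inside the open left half‑plane, subtract, and estimate the resulting integral with the resolvent–difference bound of Lemma~\ref{lem:diff-res}. Once $\Gamma$ lies in the common resolvent set of $A_R$ and $A_\rho$ and the representation
$$
e^{tA_R}-e^{tA_\rho}=\frac{1}{2\pi i}\int_\Gamma e^{t\lambda}\bigl((\lambda-A_R)^{-1}-(\lambda-A_\rho)^{-1}\bigr)\,d\lambda\qquad(t>0)
$$
is justified, the bound $\|(\lambda-A_R)^{-1}-(\lambda-A_\rho)^{-1}\|_{2,1}\lesssim\delta\min\{1,|\lambda|^{-1}\}$ reduces everything to the scalar estimate $\int_\Gamma|e^{t\lambda}|\min\{1,|\lambda|^{-1}\}\,|d\lambda|\lesssim 1+\log t^{-1}$ for $0<t\le1$ (and $\lesssim1$ for $t\ge1$); the logarithm comes from the part of the integral with $|\lambda|\ge1$.

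\emph{Choice of $\Gamma$ and uniform resolvent bounds.} The proof of Lemma~\ref{lem:sectorial} yields more than its statement: with respect to the equivalent inner product $\langle\cdot,\cdot\rangle_s$ used there, the numerical range of $A_\rho$ is contained in the \emph{$\rho$‑independent} wedge $S:=\{z\in\bC\mid\Re z\le-\sigma\eps^{-1/2}|\Im z|\}$ about the negative real axis, whose half‑opening angle $\vartheta:=\arctan(\eps^{1/2}/\sigma)$ is strictly less than $\tfrac\pi2$, and $A_\rho+q\sigma$ is dissipative for every $q<1$. By \cite[Theorem~1.3.9]{Pazy}, $\spec(A_\rho)\subset S\cap\{\Re\lambda\le-q\sigma\}$ and $\|(\lambda-A_\rho)^{-1}\|_s\le\dist(\lambda,S)^{-1}$ for $\lambda\notin S$, and likewise for $A_R$. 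Since $\delta\le\tfrac1{16}$ forces $\sup|\rho'|\le R\delta\le\tfrac1{16}$, the constants in $\|\cdot\|_s\sim\|\cdot\|_{0,1}$ and — because the resolvent commutes with $A_\rho$ — in Lemma~\ref{lem:domain-A}, are uniform in $\rho$. Fix $\vartheta'\in(\vartheta,\tfrac\pi2)$ and let $\Gamma$ be the positively oriented contour formed by the two rays $\{re^{\pm i(\pi-\vartheta')}\mid r\ge r_*\}$ joined by the vertical segment $\{-\tfrac12 q\sigma+is\}$ meeting them. Then $\Gamma\subset S^c$ lies in the common resolvent set, $\Re\lambda\le-\tfrac12 q\sigma$ on $\Gamma$, and $\dist(\lambda,S)\gtrsim|\lambda|$ on the rays; combining Eq.~\eqref{eq:sectorial} near the segment with the numerical‑range bound on the rays (and lifting from $\|\cdot\|_{0,1}$ to $\|\cdot\|_{2,1}$ via Lemma~\ref{lem:domain-A}, exactly as in the proof of Lemma~\ref{lem:sectorial}) gives $\|(\lambda-A_R)^{-1}\|_{2,1}+\|(\lambda-A_\rho)^{-1}\|_{2,1}\lesssim\min\{1,|\lambda|^{-1}\}$ on $\Gamma$, with $\rho$‑independent constants. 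The representation then follows by deforming the usual analytic‑semigroup contour onto $\Gamma$ by Cauchy's theorem, using that $(\lambda-A_\bullet)^{-1}$ is analytic and $O(|\lambda|^{-1})$ throughout $S^c$.

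\emph{Resolvent difference and the scalar integral.} The proof of Lemma~\ref{lem:diff-res} goes through verbatim on $\Gamma$: from the resolvent identity $(\lambda-A_\rho)^{-1}-(\lambda-A_R)^{-1}=(\lambda-A_R)^{-1}W(\lambda-A_\rho)^{-1}$ with $W=\bigl(\begin{smallmatrix}\Delta_{\cS_\rho}-\Delta_{\cS_R}&0\\0&0\end{smallmatrix}\bigr)$, the estimate $\|Wu\|_{0,1}\lesssim\delta\|u\|_{2,1}$ from Eq.~\eqref{eq:rho-R-4}, the bound $\|(\lambda-A_\rho)^{-1}\|_{2,1}\lesssim\min\{1,|\lambda|^{-1}\}$ above, and $\|(\lambda-A_R)^{-1}\|_{0,1\to2,1}\lesssim1$ (obtained as in Lemma~\ref{lem:diff-res} from $\|A_R(\lambda-A_R)^{-1}u\|_{0,1}\le\|u\|_{0,1}+|\lambda|\,\|(\lambda-A_R)^{-1}u\|_{0,1}$), one gets $\|(\lambda-A_R)^{-1}-(\lambda-A_\rho)^{-1}\|_{2,1}\lesssim\delta\min\{1,|\lambda|^{-1}\}$ for $\lambda\in\Gamma$. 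Substituting into the contour representation,
$$
\|e^{tA_R}-e^{tA_\rho}\|_{2,1}\ \lesssim\ \delta\int_\Gamma|e^{t\lambda}|\min\{1,|\lambda|^{-1}\}\,|d\lambda|\,.
$$
On the rays $|e^{t\lambda}|=e^{-tr\cos\vartheta'}$, so their contribution is $\lesssim\int_{r_*}^\infty e^{-ctr}\min\{1,r^{-1}\}\,dr$, which up to constants is the exponential integral $E_1(ctr_*)$ and hence $\lesssim 1+\log^{+}t^{-1}$; the finite segment contributes $\lesssim e^{-tq\sigma/2}\lesssim1$. Thus $\|e^{tA_R}-e^{tA_\rho}\|_{2,1}\lesssim\delta\,(1+\log^{+}t^{-1})$ for $t>0$ (trivially for $t=0$), which for $0<t\le1$ is the asserted bound and for $t\ge1$ reads $\lesssim\delta$.

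\emph{Main obstacle.} The sticking point is the geometry. The resolvent estimates actually recorded in Lemmas~\ref{lem:sectorial} and~\ref{lem:diff-res} hold only in a region whose boundary, for large $|\Im\lambda|$, is a vertical line, and the contour integral over such a boundary does not converge absolutely (it behaves like $\int|\Im\lambda|^{-1}\,d|\Im\lambda|$). One must push the contour into the left half‑plane along the genuine sector of analyticity of $A_\rho$, which is legitimate only because the numerical range of $A_\rho$ lies in a wedge of half‑angle $<\tfrac\pi2$ about $\R_-$ \emph{uniformly in $\rho$} — a fact contained in, but not highlighted by, the proof of Lemma~\ref{lem:sectorial}. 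Verifying the $\rho$‑uniformity of every constant (inner‑product equivalences, resolvent bounds on $\Gamma$, the $H^{0,1}\to H^{2,1}$ mapping bound) is the only genuine bookkeeping; the scalar integral producing the logarithm is routine.
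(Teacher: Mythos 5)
Your argument is correct and is essentially the proof in the paper: both represent $e^{tA_R}-e^{tA_\rho}$ as a Dunford integral over a contour opening into the left half-plane, insert the resolvent-difference bound $\lesssim\delta\min\{1,|\lambda|^{-1}\}$ from Lemma~\ref{lem:diff-res}, and reduce to the scalar integral $\int e^{-ctr}\min\{1,r^{-1}\}\,dr\lesssim 1+\log t^{-1}$. The paper uses the V-shaped contour $\Re\,\lambda=-\tfrac12\sigma\eps^{-1/2}|\Im\,\lambda|$ through the origin rather than your segment-plus-rays contour, and your observation that the regions of validity stated in Lemmas~\ref{lem:sectorial} and~\ref{lem:diff-res} must be extended (via the uniform wedge bound on the numerical range) to cover such a contour is a fair point about the paper's bookkeeping rather than a difference in method.
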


\begin{proof}
Let $\Gamma$ be the contour consisting
of the two half-lines $\Re\,\lambda= -\frac12\sigma \eps^{-\frac12}
|\Im\, \lambda|$, traversed counterclockwise.
By Lemma~\ref{lem:sectorial}, $\Gamma$ encloses the
spectrum of $A_\rho$ and $A_R$. Since $A_\rho$ 
is sectorial, the
semigroup $e^{tA_\rho}$ is represented by the contour integral
$$
e^{tA_\rho} = \frac{1}{2\pi i}
\oint_{\Gamma} e^{\lambda t} (\lambda-A_\rho)^{-1}\,d\lambda\,,
$$
and correspondingly for $A_R$.
Parametrizing $\Gamma$ by
$\lambda(s) = -|s|+ 2i \sigma^{-1}\sqrt\eps$,
we see that for each $t>0$, the integral converges
absolutely with respect to the operator norm on $H^{2,1}$.

We estimate the difference from $e^{tA_R}$ by
\begin{align*}
\|e^{tA_\rho}-e^{tA_R}\|_{2,1}
&= \left\|\frac{1}{2\pi i}
\oint_{\Gamma} e^{\lambda t} 
\bigl( (\lambda-A_\rho)^{-1}- (\lambda-A_R)^{-1}\bigr)\,d\lambda\right\|_{2,1}\\
&\le \frac{1}{2\pi}
\int_{-\infty}^\infty e^{t\Re \,\lambda(s)}
\|(\lambda\!-\!A_\rho)^{-1} -(\lambda\!-\!A_R)^{-1}\|_{2,1}\, 
|\lambda'(s)|\, ds\\
&\lesssim \delta\int_0^\infty e^{-ts} \min\{1,s^{-1}\}\, ds\,,
\end{align*}
where we have applied Lemma~\ref{lem:diff-res}
to the integrand in the last step. 
For $t\ge 1$, the integral is uniformly bounded.
For $t< 1$, we have
$$
\int_0^\infty e^{-ts} \min\{1,s^{-1}\}\, ds
\le 1 + \int_1^{t^{-1}} \!\!\!s^{-1}\, ds +\int_{t^{-1}}^\infty 
\!t e^{-ts}\, ds
\le 2+\log t^{-1}\,,
$$
proving the claim.
\end{proof}

Next, we address the dependence of the nonlinear evolution
on $\rho$.

\subsection{Proof of Proposition~\ref{prop:continuous-rho}}
Given a solution $u(t)$ of the FHN system on $\cS_R$,
set $\eta = 2 \sup_{0\le t\le T} \|u(t)\|_{2,1} $.
Let $u_\rho(t)$ be the solution on $\cS_\rho$ with
the same initial value, $u_0$.
By definition of the mild solutions,
$$
u_\rho(t)-u(t) = (e^{tA_\rho}-e^{tA_R})u_0
+ \int_0^t \left(
e^{(t-s)A_\rho}N(u_\rho(s)) - e^{(t-s)A_R}N(u(s))\right)\, ds\,,
$$
so long as both solutions exist.
By Proposition~\ref{prop:difference}, for
$\delta\le\frac{1}{16}$ the difference of the semigroups
is bounded by
$$
\|e^{tA_\rho}-e^{tA_R}\|\le C_0\delta(1+\log t^{-1})\,,\qquad (t>0)
$$
with some constant $C_0$.
We use the triangle inequality on the integrand,
and then apply the semigroup estimate and Eq.~\eqref{eq:Lip-N},
\begin{align*} 
\|e^{(t-s)A_\rho}N(u_\rho) -e^{(t-s)A_R}N(u)\|_{2,1}
\hskip -2cm\\
&\le \|(e^{(t-s)A_\rho}-e^{(t-s)A_R})N(u)\|_{2,1}+ 
\|e^{(t-s)A_\rho}(N(u_\rho)-N(u))\|_{2,1} \\
&\le C_0\delta (1+\log t^{-1})\|N(u)\|_{2,1} +
C_1\|N(u_\rho)-N(u)\|_{2,1}
\\
&\le C_0C_\eta \delta (1+\log t^{-1})\|u\|_{2,1} +C_1C_\eta\|u_\rho-u\|_{2,1}
\,,
\end{align*}
so long as $\|u_\rho(s)\|\le \eta$.
Here, $C_1=\sup_t e^{tA_\rho}$, see Lemma~\ref{lem:sectorial},
and $C_\eta$ is the Lipschitz constant Eq.~\eqref{eq:Lip-N}.
For the integral, it follows that
$$
\|u_\rho(t)-u(t)\|_{2,1}
\le C_2(T) \delta \|u_0\|_{2,1} + C_3\int_0^t \|u_\rho(s)-u(s)\|_{2,1}\,ds\,,
$$
where $C_2(t)=C_1(1+2 C_\eta t)(1+\log t^{-1})$, and 
$C_3=C_2C_\eta$.
In the bound on the nonlinearity, we have used
that $\|u_\rho(t)\|_{2,1}\le \frac12 \eta$ for $0\le t\le T$.
Set $C:=C_2(T)e^{C_3 T}$.
By Gr\"onwall's inequality,
$$
\|u_\rho(t)-u(t)\|_{2,1} \le C\delta\|u_0\|_{2,1} \,,
\qquad (0\le t\le T)
$$
provided that $\sup_{0\le t\le T}\|u_\rho(t)\|_{2,1}\le \eta$. Since 
$\|u(t)\|_{2,1}\le \frac12 \eta$ for $0\le t\le T$,
by the triangle inequality 
this is guaranteed by setting
$\delta_*= \min\left\{\tfrac{1}{16}, \tfrac{\eta}{2C}\right\}$.
\hfill $\Box$

We are now ready to construct the near-pulse solutions
on warped cylinders.

\subsection{Proof of Theorem~\ref{thm:persist}}
\label{subsec:proof-persist}
For reference, consider the FHNcyl system on a standard
cylinder $\cS_R$ in a neighborhood of $\cM$.
Fix a pulse $\Phi\in \cM$.
Under the assumptions of
Theorem~\ref{thm:nonli-stab} there are constants $C_0\ge 1$ 
and $\xi_0>0$ and a neighborhood
$\cU$ of $\Phi$ in $H^{2,1}$ such that 
$\dist(u(t),\cM)\le C_0e^{-\xi_0 t}
\|u_0-\Phi\|_{2,1}$ for every solution with initial values in $\cU$.
By translation invariance, it follows that
\begin{equation}
\label{eq:dist-M}
\dist(u(t),\cM)\le C_0 e^{-\xi_0 t}\dist(u_0,\cM)
\end{equation}
for all solutions with initial values in some
tubular neighborhood $\cW$ of $\cM$. We take $\cW$ to have the form
$$
\cW = \left\{w\in H^{2,1}\ \big\vert \ \dist(w,\cM)<\eta\right\}
$$ 
for some $\eta>0$ with $C_0\eta\le\|\Phi\|_{2,1}$. 
Set $T:= \frac{1}{\xi_0}\log(2C_0)$, so that $C_0e^{-\xi_0 T}=\frac12$.
By the triangle inequality,
$$
\sup_{0\le t\le T} \|u(t)\|
\le \sup_{0\le t\le T} \dist(u(t),\cM)+
\sup_{\Phi_h\in \cM} \|\Phi_h\|_{2,1} 
\le 2 \|\Phi\|_{2,1}
$$
for all solutions on $\cS_R$ with initial
value $u\big\vert_{t=0}\in\cW$. 

Consider now the FHNcyl system on a warped cylinder $\cS_\rho$
with $\delta:=R^{-1}\|\rho-R\|_{C^2}\le \delta_*$ 
(to be determined below).
Given an initial condition $u_0\in\cW$, let $u(t)$ be 
the mild solution of the reference system on $\cS_R$
with $u\big\vert_{t=0}=u_0$.
Since $\|u(t)\|_{2,1}\le 2\|\Phi\|_{2,1}$ for all $t\ge 0$, 
by Proposition~\ref{prop:continuous-rho}
there is a value $\delta_0>0$ (determined by $T$) and $C>0$
such that  
\begin{equation}
\label{eq:delta}
\sup_{0\le t\le T}\|u_\rho(t)-u(t)\|_{2,1}\le C\delta \,,
\end{equation} 
provided that $\delta\le \delta_0$.  Choose 
$\delta_*:=\min\{\delta_0,\frac{\eta}{2C}\}$.
Assuming that $\delta\le \delta_*$, we combine
Eq.~\eqref{eq:dist-M} with Eq.~\eqref{eq:delta}
to see that 
\begin{align}
\notag
\dist(u_\rho(t),\cM) &\le \dist(u(t),\cM) +
\|u_\rho(t)-u(t)\|_{2,1}\\
\label{eq:proof-persist-interval}
& \le C_0e^{-\xi_0 t}\dist(u_0,\cM) + C\delta\\
\notag
&< 2C_0\eta
\end{align}
for all $t\in [0,T]$.  It follows that
$\|u_\rho(t)\|_{2,1}\le 2\|\Phi\|_{2,1}$.
Furthermore, by the choice of $\delta_*$ and $T$,
$$
\dist(u_\rho(T),\cM) \le \frac12 \dist(u_0,\cM) + C\delta< \eta\,.
$$
Therefore $u_\rho(T)\in \cW$ whenever
$u_\rho(0)\in \cW$. We repeat the
estimate to obtain inductively
$$
\dist(u_\rho((k\!+\!1)T),\cM) \le 
\frac12 \dist(u_\rho(kT),\cM) + C\delta\,,\qquad (k\in \N)\,.
$$
Solving the recursion, we conclude that
$ \dist(u_\rho(kT),\cM) \le 
2^{-k} \dist(u_0,\cM) + 2 C\delta$,
and further,  by Eq.~\eqref{eq:proof-persist-interval}
$$
\dist(u_\rho(t),\cM) 
\le 
2^{-k} C_0\, \dist(u_0,\cM) + (2+C_0)C\delta
$$
for all $t$ with $kT\le t \le (k+1)T$ and all $k\in\N$.
By the choice of $T$, this implies Eq.~\eqref{eq:persist}
with $\xi= \frac{\log 2}{\log 2 + \log C_0}\xi_0$,
$C_1=2C_0$, 
and $C_2=(2+C_0)C$.
\hfill $\Box$

\addcontentsline{toc}{section}{References}


\begin{thebibliography}{9999}

\bibitem{AK15} G. Arioli, H. Koch, Existence and stability of traveling pulse solutions of the
FitzHugh–Nagumo equation, Nonlinear Analysis 
\textbf{113}, pp. 51-70, 2015.


\bibitem{BJSW08} M. Beck, Ch. K. R. T. Jones, D. Schaeffer, M. Wechselberger, Electrical waves in a one-dimensional model of
cardiac tissue. SIAM J. Applied Dynamical Systems, \textbf{7}(4):1558-1581, 2008.


\bibitem{Carpenter_77} G. Carpenter, A geometric approach to singular perturbation problems
with applications to nerve impulse equations, J. Differential Equations 
\textbf{23}, pp. 152-173, 1977.


\bibitem{CdRS16} P. Carter, B. de Rijk, B. Sandstede, Stability of traveling Pulses with Oscillatory Tails in the FitzHugh–Nagumo System, Journal of Nonlinear Science, \textbf{26}, 1369-1444, 2016.


\bibitem{CS15} P. Carter, B. Sandstede, Fast pulses with oscillatory tails 
in the FitzHugh--Nagumo system, SIAM J. Math. Anal., Vol. 47, \textbf{5}, 3393-3441, 2015.


\bibitem{Chen-Choi15} C. Chen, Y. S. Choi, Traveling pulse solutions to FitzHugh– Nagumo equations. Calculus of Variations and Partial Diﬀerential Equations \textbf{54}(1), 1-45, 2015. 


\bibitem{Chen-Hu14} C. Chen and X. Hu, Stability analysis for standing pulse solutions to FitzHugh–Nagumo equations, Calculus of Variations and Partial Diﬀerential Equations \textbf{49}, no. 1-2, 827–845, 2014. 


\bibitem{Cornwell-Jones18} P. Cornwell, Ch. K. R. T. Jones, On the 
existence and stability of fast traveling waves in a doubly 
diffusive FitzHugh--Nagumo System. SIAM Journal on Applied Dynamical Systems \textbf{17}(1), 754-787, 2018.


\bibitem{Engel_Nagel} K.-J. Engel, R. Nagel, A Short Course on Operator Semigroups, Springer-Verlag, New York, 2006.


\bibitem{EvansIV} J. W. Evans, Nerve axon equations, IV: The stable and the unstable impulse. Indiana Univ. Math. J., \textbf{24}, 1169-1190, 1975.


\bibitem{FH61} R. FitzHugh, Impulses and physiological states in theoretical models of nerve membrane, Biophysical J., \textbf{1}, 445-466, 1961.


\bibitem{Flores} G. Flores, Stability analysis for the slow traveling pulse of the FitzHugh-Nagumo system, SIAM J. Math. Anal., Vol. 22, No. 2, pp. 392-399, 1991.



\bibitem{Hastings_76} S. P. Hastings, On the existence of homoclinic and periodic orbits for 
the FitzHugh-Nagumo equations, Quart. J. Math. Oxford Ser. (2) \textbf{27}, pp. 123-134, 1976.


\bibitem{Hastings_82} S. P. Hastings, Single and multiple pulse waves for the 
FitzHugh-Nagumo equations, SIAM J. Appl. Math., Vol. 42, No. 2, pp. 247-260, 1982.


\bibitem{Henry_81} D. Henry, Geometric Theory of Semilinear Parabolic Equations,
Lecture Notes in Math., 840, Springer - Verlag, Berlin, New York, 1981.


\bibitem{HH52} A. L. Hodgkin, A. F. Huxley, A quantitative description of membrane and its application to conduction and excitation in nerve, J. Physiol., \textbf{117}, 500-544, 1952.

\bibitem{HMSSVV} H. J. Hupkes, L. Morelli, W.M. Schouten-Straatman, 
E.S. Van Vleck (2020) Traveling waves and pattern formation for spatially 
discrete bistable reaction-diffusion equations. 
In: Bohner M., Siegmund S., Šimon Hilscher R., Stehlík P. (eds) Difference Equations and Discrete Dynamical Systems with Applications. ICDEA 2018. Springer Proceedings in Mathematics \& Statistics, vol 312. Springer, Cham


\bibitem{HS13}  H. J. Hupkes and B. Sandstede, Stability of pulse solutions for the discrete FitzHugh-Nagumo system, Transactions of the American Mathematical Society, \textbf{365}, 251-301, 2013.


\bibitem{IMT87}  H. Ikeda, M. Mimura and T. Tsujikawa, Slow traveling wave solutions to the Hodgkin-Huxley equations, Lecture Notes in Numer. Appl. Anal., \textbf{9}, 1-73, 1987.


\bibitem{I87}  M. Ito, Traveling train solutions of FitzHugh-Nagumo systems. Lecture Notes in Numer. Appl. Anal., \textbf{9}, 75-104, 1987.


\bibitem{Jones_84}  Ch. K. R. T. Jones, Stability of the traveling wave solution of  the FitzHugh-Nagumo system,
Trans. AMS, \textbf{286}, No. 2, pp. 431-469, 1984.


\bibitem{JKL91} Ch. K. R. T. Jones, N. Kopell, and R. Langer, 
Construction of the 
FitzHugh-Nagumo pulse using differential forms. In: 
Patterns and Dynamics in reactive 
Media (H. Swinney, G. Aris, and D. G. Aronson, Eds.), pp. 101-116, IMA Volumes in 
Mathematics and its Applications, Vol. 37, Springer, New York, 1991.


\bibitem{KP13} T. Kapitula, K. Promislow, Spectral and Dynamical Stability
of Nonlinear Waves, Applied Mathematical Sciences 185, 2013.


\bibitem{Keener86} J. P. Keener, A geometrical theory for spiral waves in excitable media. SIAM J. Appl. Math., \textbf{46}, 1039-1056, 1986. 


\bibitem{KSS97} M. Krupa, B. Sandstede and P. Szmolyan, Fast and slow waves in the FitzHugh-Nagumo equation.
Journal of Differential Equations, \textbf{133}, 49-97, 1997. 


\bibitem{Langer_80} R. Langer, Existence and uniqueness of pulse solutions to the FitzHugh-Nagumo equations, PhD Thesis, Northeastern Univ., 1980.


\bibitem{Maginu80} K. Maginu, Existence and stability of periodic travelling wave solutions to Nagumo's nerve equation, J. Math. Biol., \textbf{10}, 133-153, 1980.


\bibitem{MK83} A. S. Mikhailov and V. I. Krinskii, Rotating spiral waves in excitable media: the analytical results, Phys.,  \textbf{9D}, 346-371, 1983.


\bibitem{Nagumo62} J. Nagumo, S. Arimoto, and S. Yoshizawa, An active pulse transmission line simulating nerve axon, Proc. IRE., \textbf{50}, 2061-2070, 1964.


\bibitem{Pazy} A. Pazy, Semigroups of Linear Operators and Applications to
Partial Differential Equations, Springer - Verlag, New York, 1983.


\bibitem{Pruss} J. Pr\"uss. On the spectrum of $C_0$-semigroups,
Trans. Amer. Math. Soc., 284:847-857, 1984.

\bibitem{SSH19} W.M. Schouten-Straatman, H. J. Hupkes, 
Traveling waves for spatially discrete systems of FitzHugh-Nagumo type 
with periodic coefficients, SIAM Journal on Mathematical Analysis, Vol. 51, No. 4 : pp. 3492-3532, 2019. 


\bibitem{Tsujikawa_89} T. Tsujikawa, T. Nagai, M. Mimura, R. Kobayashi 
and H. Ikeda, Stability properties of traveling pulse solutions of the
higher dimensional FitzHugh-Nagumo equations, Japan J. Appl. Math.,
\textbf{6}, pp. 341-366, 1989.


\bibitem{Yanagida_85} E. Yanagida, Stability of the 
fast traveling pulse solutions of the
FitzHugh-Nagumo equations, J. Math. Biol. \textbf{22}, pp. 81-104, 1985.




\end{thebibliography}
\end{document}